\documentclass[12pt]{amsart}


\usepackage{amssymb}
\usepackage{bm}
\usepackage{graphicx}
\usepackage{psfrag}
\usepackage{color}
\usepackage{url}
\usepackage{algpseudocode}
\usepackage{fancyhdr}
\usepackage{xy}
\input xy
\xyoption{all}


\voffset=-1.4mm
\oddsidemargin=-8pt
\evensidemargin=-8pt 
\topmargin=26pt
\headheight=9pt     
\textheight=576pt
\textwidth=490pt 
\parskip=0pt plus 4pt


\pagestyle{fancy}
\fancyhf{}

\fancyhead[CE]{\fontsize{9}{11}\selectfont F.~GOTTI}
\fancyhead[CO]{\fontsize{9}{11}\selectfont POSITROIDS INDUCED BY RATIONAL DYCK PATHS}
\fancyhead[LE,RO]{\thepage}

\newtheorem*{maintheorem*}{Main Theorem}
\newtheorem{theorem}{Theorem}[section]
\newtheorem{prop}[theorem]{Proposition}
\newtheorem{lemma}[theorem]{Lemma}
\newtheorem{cor}[theorem]{Corollary}
\theoremstyle{definition}
\newtheorem{definition}[theorem]{Definition}
\newtheorem{example}[theorem]{Example}
\newtheorem{remark}[theorem]{Remark}
\numberwithin{equation}{section}

\newcommand{\rr}{\mathbb{R}}
\newcommand{\zz}{\mathbb{Z}}

\newcommand{\fivepf}[9]
{
	\left\{
	\begin{array}{ll}
		#1 & \mbox{if } #2 \\
		#3 & \mbox{if } #4 \\
		#5 & \mbox{if $i=1$} \\
		#6 & \mbox{if } #7 \\
		#8 & \mbox{if } #9
	\end{array}
	\right.
}

\keywords{Dyck path, rational Dyck path, positroid, Grassmannian, rational Dyck positroid, decorated permutation, \reflectbox{L}-diagram, plabic graph}

\begin{document}
	
	\mbox{}
	\title{Positroids Induced by Rational Dyck Paths}
	\author{Felix Gotti}
	\address{Department of Mathematics\\UC Berkeley\\Berkeley, CA 94720}
	\email{felixgotti@berkeley.edu}
	\date{\today}
	
	\begin{abstract}
		A rational Dyck path of type $(m,d)$ is an increasing unit-step lattice path from $(0,0)$ to $(m,d) \in \zz^2$ that never goes above the diagonal line $y = (d/m)x$. On the other hand, a positroid of rank $d$ on the ground set $[d+m]$ is a special type of matroid coming from the totally nonnegative Grassmannian. In this paper we describe how to naturally assign a rank $d$ positroid on the ground set $[d+m]$, which we name \emph{rational Dyck positroid}, to each rational Dyck path of type $(m,d)$. We show that such an assignment is one-to-one. There are several families of combinatorial objects in one-to-one correspondence with the set of positroids. Here we characterize some of these families for the positroids we produce, namely Grassmann necklaces, decorated permutations, \reflectbox{L}-diagrams, and move-equivalence classes of plabic graphs. Finally, we describe the matroid polytope of a given rational Dyck positroid.
	\end{abstract}

\maketitle

\tableofcontents

\section{Introduction} \label{sec:intro}

For each pair of nonnegative integers $(m,d)$, a \emph{rational Dyck path} of type $(m,d)$ is a lattice path from $(0,0)$ to $(m,d)$ whose steps are either horizontal or vertical subject to the restriction that it never goes above the line $y = (d/m)x$. Figure~\ref{fig:rational Dyck path} below depicts a rational Dyck path of type $(8,5)$. Note that a rational Dyck path of type $(m,m)$ is just an ordinary Dyck path of length $2m$. The number of Dyck paths of length $2m$ is precisely the $m$-th Catalan number (many other families of relevant combinatorial objects can also be counted by the Catalan numbers; see~\cite{rS15}). The number $\text{Cat}(m,d)$ of rational Dyck paths of type $(m,d)$ is the \emph{rational Catalan number} associated to the pair $(m,d)$. It is known that
\begin{equation} \label{eq:formula of rational catalan number coprime}
	\text{Cat}(m,d) = \frac{1}{d+m} \binom{d+m}{d} 
\end{equation}
when $\gcd(d,m) = 1$. A general formula for the rational Catalan numbers (without assuming co-primeness) was first conjectured by Grossman \cite{hG50} and then proved by Bizley \cite{mB54}. This general formula is much more involved than the one stated in \eqref{eq:formula of rational catalan number coprime}, as the next generating function shows:
\[
	\sum_{n=0}^\infty \text{Cat}(nm,nd) x^n = \exp\bigg( \sum_{j=1}^\infty \frac 1{d+m}\binom{jd + jm}{jd}\frac{x^j}j \bigg),
\]
where, as before, $\gcd(d,m) = 1$. Rational Catalan numbers also appear in the context of partitions. An $(m,d)$-\emph{core} is a partition having no hook lengths equal to $d$ or $m$. A bijection from the set of rational Dyck paths of type $(m,d)$ to the set of $(m,d)$-cores has been established by Anderson in \cite{jA02}. For several results and conjectures on $(m,d)$-cores, the reader can consult \cite{ACHJ14} and \cite{SZ15}.


\begin{figure}[h]
	\centering
	\includegraphics[width = 7.5cm]{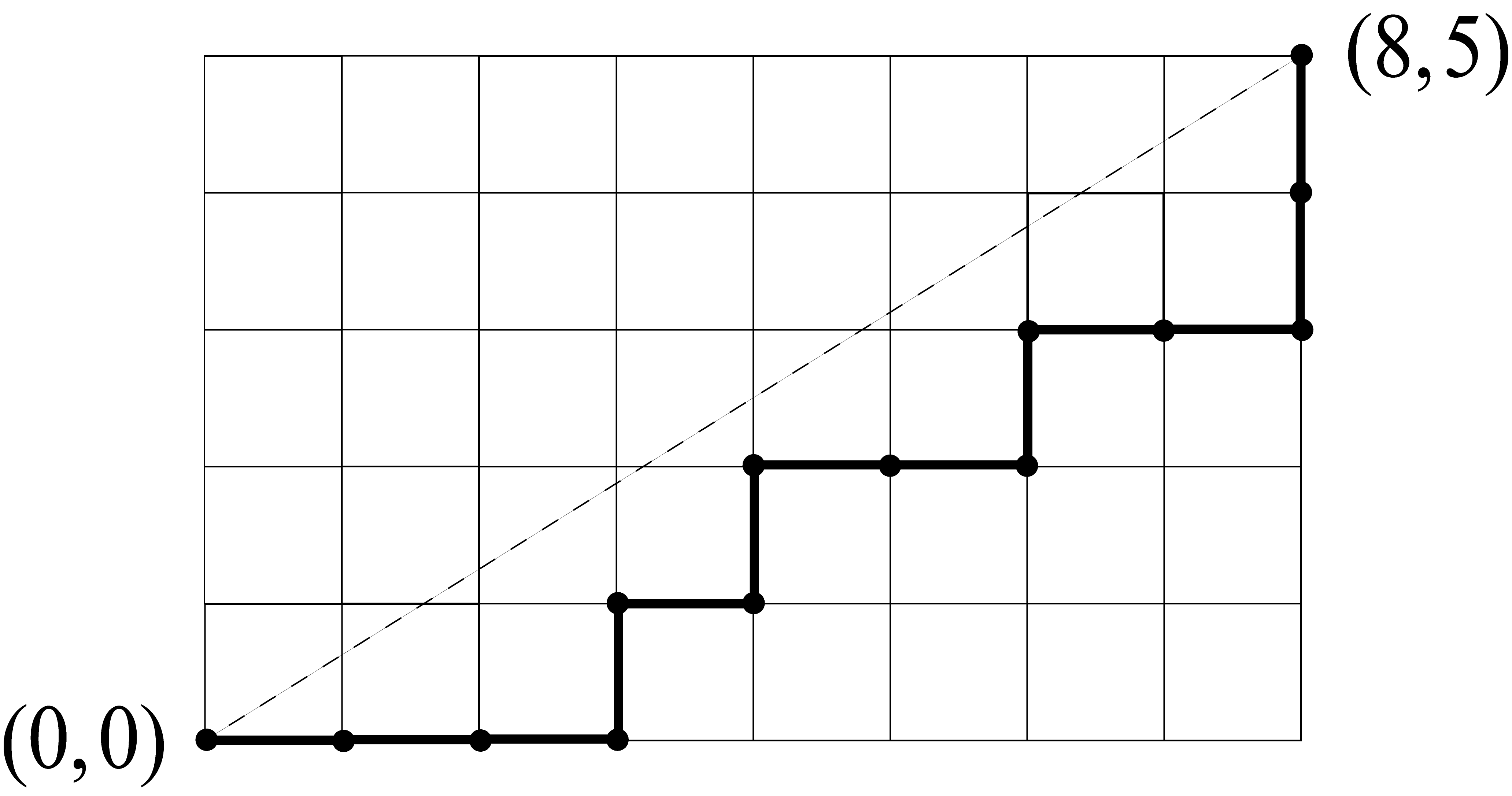}
	\caption{A $(8,5)$-rational Dyck path.} \label{fig:rational Dyck path}
\end{figure}

The classical theory of total positivity, introduced by Gantmacher, Krein, and Schoenberg in the 1930's, has been recently revitalized as a result of the many connections it has with Lusztig's theory of canonical bases for quantum groups (see \cite{gL98} and references therein). In particular, Lusztig extended the theory of total positivity by introducing the totally nonnegative part of a real flag variety. Consequently, an exploration of the combinatorial structure of the totally nonnegative part of the Grassmannian, denoted here by $(\text{Gr}_{d,n})_{\ge 0}$, was initiated by many mathematicians. Motivated by the work of Lusztig and the work of Fomin and Zelevinsky~\cite{FZ99}, Postnikov introduced in~\cite{aP06} positroids as matroids represented by elements of $(\text{Gr}_{d,n})_{\ge 0}$ and showed that they are in bijection with various families of elegant combinatorial objects, including Grassmann necklaces, decorated permutations, \reflectbox{L}-diagrams, and certain classes of plabic graphs (all of them to be introduced later). Positroids and the totally nonnegative Grassmannian have been the focus of much attention lately as they are connected to many mathematical subjects under active investigation including free probability~\cite{ARW16}, soliton solutions to the KP~equation~\cite{KW14}, mirror symmetry~\cite{MR13}, and cluster algebras~\cite{jS06}.

Furthermore, positroids and the totally nonnegative part of the Grassmannian have been also intensively studied in physics because of their applications to compute scattering amplitudes in quantum field theory; readers more inclined to physics might want to consult \cite{ABCGPT16}, which comprises many recent results in this direction. More connections to physics, in particular local space-time physics, can be found in \cite{ABCT11} and \cite{ABCT11a}. In addition, a public package called ``\texttt{positroids}" was added to \emph{Mathematica} (see \cite{jB12}); it includes several tools to create, compute, and visualize decorated permutations, plabic graphs and further objects related to positroids and relevant to computations of scattering amplitudes.

 A $d \times m$ binary matrix is called a \emph{rational Dyck matrix} if its zero entries form a right-justified Young diagram located strictly above the (geometric) main diagonal and anchored in the upper-right corner. If $D = (a_{i,j})$ is a rational Dyck matrix, we shall see in Section~\ref{sec:rational Dyck positroids} that the matrix
 \begin{equation*} \label{UIP generic matrix}
 	\bar{D} =\begin{pmatrix}
 	1  	  		&\dots   &  0        &   0  	 & (-1)^{d-1} a_{d,1} & \dots	& (-1)^{d-1} a_{d,m}  \\
 	\vdots  &\ddots  &\vdots & \vdots & \vdots 	      			& \ddots & \vdots         			\\
 	0  	  		&\dots   &  1        &   0  	 & -a_{2,1}  			& \dots   & -a_{2,m}    		     \\
 	0  	  		&\dots   &  0        &   1  	 & a_{1,1}        			& \dots   & a_{1,m}   
 	\end{pmatrix}
 \end{equation*}
 has all its maximal minors nonnegative. We call the positroids represented by matrices like $\bar{D}$ \emph{rational Dyck positroids}. Notice that the zero entries $a_{i,j}$ of $\bar{D}$ are separated from the nonzero entries $a_{i,j}$ by a rational Dyck path of type $(m,d)$.
 
The main purpose of this paper is to study rational Dyck positroids by establishing combinatorial descriptions for some of the objects parameterizing them. In Section~\ref{sec:rational Dyck positroids} we provide the definitions and results we need to formally describe how to produce rational Dyck positroids from rational Dyck paths. In Section~\ref{sec:grassmann necklace}, a description of the Grassmann necklaces corresponding to rational Dyck positroids is provided. Then, in Section~\ref{sec:decorated permutation}, we study the decorated permutations corresponding to rational Dyck positroids, extending the characterization given in \cite[Theorem~4.4]{CG17}. We also prove that the defining assignment of rational Dyck matrices to rational Dyck positroids is a one-to-one correspondence. In Section~\ref{sec:Le-diagram}, we describe the \reflectbox{L}-diagrams parameterizing rational Dyck positroids, which will yield a simple way to know the dimension of the cell of a rational Dyck positroid in the Grassmann cell decomposition of $(\text{Gr}_{d,m})_{\ge 0}$. In Section~\ref{sec:plabic graph}, we study the classes of plabic graphs parameterizing rational Dyck positroids. Finally, in Section~\ref{sec:positroid polytope}, we provide a description of the matroid polytope of a rational Dyck positroid.

\section{The Positroid Induced by a Rational Dyck Path} \label{sec:rational Dyck positroids}

In this section we formally define \emph{rational Dyck positroids}, which are the objects under study here.

\begin{definition}
	For each pair of nonnegative integers $(m,d)$, a \emph{rational Dyck path} of type $(m,d)$ is a lattice path from $(0,0)$ to $(m,d)$ that only uses unit steps $(1,0)$ or $(0,1)$ and never goes above the diagonal line $y = (d/m)x$.
\end{definition}

When there is no risk of ambiguity, we will abuse notation by referring to a rational Dyck path without specifying the copy of $\rr^2$ in which it is embedded. Let us assign to each rational Dyck path a special binary matrix.

\begin{definition}
	A $d \times m$ binary matrix is called a \emph{rational Dyck matrix} if its zero entries are separated from its one entries by a vertically-reflected rational Dyck path of type $(m,d)$.
\end{definition}

Observe that this definition of a rational Dyck matrix coincides with the description of a rational Dyck matrix given in Section~\ref{sec:intro}. Here is an example of a $5 \times 8$ rational Dyck matrix:
\[
	\begin{pmatrix}
		1 & 1 & 1 & 0 & 0 & 0 & 0 & 0 & 0 \\
		1 & 1 & 1 & 1 & 0  & 0 & 0 & 0 & 0 \\
		1 & 1 & 1 & 1 & 1  & 1 & 0  & 0 & 0 \\
		1 & 1 & 1 & 1 & 1  & 1 & 1  & 1  & 1 \\
		1 & 1 & 1 & 1 & 1  & 1 & 1  & 1  & 1
	\end{pmatrix}.
\]

Let $\mathcal{D}_{d,m}$ denote the set of $d \times m$ rational Dyck matrices. Each rational Dyck path $\mathsf{d}$ of type $(m,d)$ induces the $d \times m$ rational Dyck matrix whose zero entries are separated from its one entries via the vertically-reflected path of $\mathsf{d}$. Our definition of rational Dyck matrix is consistent with the definition of Dyck matrix given in \cite{CG17} when $d = m$. It is well known that standard Dyck matrices are \emph{totally nonnegative}, i.e., all their minors are nonnegative (see, for instance, \cite{ASW52}).

\noindent {\bf Notation:} If $X$ is an $n \times n$ real matrix and $I,J \subseteq [n] := \{1,\dots,n\}$ satisfy $|I| = |J|$, then we let $\Delta_{I,J}(X)$ denote the minor of $X$ determined by the set of rows indexed by $I$ and the set of columns indexed by $J$. Besides, if $Y$ is a $k \times n$ matrix and $K \subseteq [n]$ satisfies $|K| = k$, then we let $\Delta_K(Y)$ denote the maximal minor of $Y$ determined by the set of columns indexed by $K$.

Let $\text{Mat}_{d,m}(\rr)$ denote the set of all $d \times m$ real matrices, and let $\text{Mat}^+_{d,m}(\rr)$ be the subset of $\text{Mat}_{d,m}(\rr)$ consisting of those full-rank matrices with nonnegative maximal minors. Consider the assignment $\phi_{d,m} \colon \text{Mat}_{d,m}(\rr) \to \text{Mat}_{d,d+m}(\rr)$ defined by
\[
	\begin{pmatrix}
		a_{1,1} 		& \dots		 & a_{1,m}     \\
		\vdots 	      & \ddots	   & \vdots     \\
		a_{d-1,1}    & \dots       & a_{d-1,m} \\
		a_{d,1}        & \dots      & a_{d,m}    
	\end{pmatrix} \
	\stackrel{\phi_{d,m}}{\longmapsto} \
	\begin{pmatrix}
		1  	  	 	 &\dots    &  0        &   0  	   & (-1)^{d-1} a_{d,1}   & \dots	& (-1)^{d-1} a_{d,m} \\
		\vdots   &\ddots  &\vdots   & \vdots & \vdots 	      			& \ddots & \vdots \\
		0  	  		&\dots    &  1         &   0  	   & -a_{2,1}  			      & \dots   & -a_{2,m} \\
		0  	  		&\dots    &  0        &   1  	   & a_{1,1}        		   & \dots   & a_{1,m}   
	\end{pmatrix}.\! \!
\]

As the next lemma reveals, the map $\phi_{d,m}$ somehow respects the minors of a given matrix.

\begin{lemma} \cite[Lemma~3.9]{aP06} \label{lem:correspondence between totally nonnegative square and rectangular matrix}\!\footnote{There is a typo in the entries of the matrix $B$ in \cite[Lemma~3.9]{aP06}.}
	If $A \in \emph{Mat}_{d,m}(\rr)$ and $B = \phi_{d,m}(A)$, then
	\[
		\Delta_{I,J}(A) = \Delta_{(d+1 - [d]\setminus I) \cup (d + J)}(B)
	\]
	for all $I \subseteq [d]$ and $J \subseteq [m]$ satisfying $|I| = |J|$.
\end{lemma}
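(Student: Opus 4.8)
The plan is to make the structure of $B = \phi_{d,m}(A)$ fully explicit and then reduce the maximal minor on the right-hand side to an ordinary minor of $A$ by exploiting the leading identity block. Writing the entries out, one has $B_{i,c} = \delta_{i,c}$ for $c \in [d]$ and $B_{i, d+j} = (-1)^{d-i} a_{d+1-i,\, j}$ for $j \in [m]$. It will be convenient to record this as a block factorization $B = [\, I_d \mid S R A\,]$, where $I_d$ is the $d \times d$ identity matrix, $R$ is the $d\times d$ anti-diagonal permutation matrix with $R_{i,k} = \delta_{i,\, d+1-k}$ (row reversal), and $S = \mathrm{diag}\big((-1)^{d-1}, (-1)^{d-2}, \dots, (-1)^0\big)$. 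Indeed, $(SRA)_{i,j} = (-1)^{d-i} A_{d+1-i,\, j}$, which matches the last $m$ columns of $B$.

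Next I would invoke the standard description of the maximal minors of a matrix of the form $[\,I_d \mid N\,]$. Writing $K = K_1 \cup (d + K_2)$ with $K_1 := d+1-([d]\setminus I) \subseteq [d]$ and $K_2 := J \subseteq [m]$, and noting that $|K_1| + |K_2| = (d - |I|) + |J| = d$ since $|I| = |J|$, one expands $\det\big(B[:,K]\big)$ along the identity columns. Each such column is a standard basis vector $e_c$ with $c \in K_1$, so it forces the selection of row $c$ and leaves a square submatrix supported on the complementary rows $[d]\setminus K_1$ and on the columns $d + K_2$. Because $x \mapsto d+1-x$ is an involution of $[d]$, the complement satisfies $[d]\setminus K_1 = d+1-I$, and this step yields $\Delta_K(B) = \varepsilon_L \, \Delta_{d+1-I,\, J}(SRA)$ for an explicit Laplace sign $\varepsilon_L$ determined by the relative positions of the identity columns inside $K$.

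It then remains to strip off $S$ and $R$ and to match the orientation. Since $S$ is diagonal, $\Delta_{d+1-I,\, J}(SRA) = \big(\prod_{r \in d+1-I} (-1)^{d-r}\big)\,\Delta_{d+1-I,\, J}(RA) = \big(\prod_{i \in I}(-1)^{i-1}\big)\,\Delta_{d+1-I,\, J}(RA)$, using the substitution $r = d+1-i$. Applying $R$ identifies the rows $d+1-I$ of $RA$ with the rows $I$ of $A$ taken in reversed order, so $\Delta_{d+1-I,\, J}(RA) = \varepsilon_R\, \Delta_{I,J}(A)$, where $\varepsilon_R$ is the sign of the order-reversing permutation of an $|I|$-element set. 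Assembling the three contributions gives $\Delta_K(B) = \varepsilon_L \cdot \big(\prod_{i\in I}(-1)^{i-1}\big)\cdot \varepsilon_R \cdot \Delta_{I,J}(A)$, and the heart of the argument is to verify that this product of signs equals $1$; the explicit signs built into the definition of $\phi_{d,m}$ were chosen precisely to force this cancellation. I expect the sign bookkeeping to be the only real obstacle. The cleanest route is to fold $\varepsilon_L$, $\varepsilon_R$, and the diagonal signs into the single permutation of $[d]$ recording, column by column, which row is selected in the nonzero terms of $\det\big(B[:,K]\big)$, and then to compute the parity of that permutation directly and check that it cancels $\prod_{i\in I}(-1)^{i-1}$. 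Low-rank sanity checks (for instance $d = m = 2$) confirm that the net sign is $+1$, so no correction term survives and the claimed identity $\Delta_{I,J}(A) = \Delta_K(B)$ follows.
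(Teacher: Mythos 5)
The paper never proves this lemma: it is imported directly from Postnikov \cite[Lemma~3.9]{aP06} (the footnote only flags a typo in the cited matrix), so there is no in-paper argument to compare against, and a self-contained verification like yours is genuinely worthwhile. Your structural setup is correct and is the natural route: the entrywise formula $B_{i,d+j}=(-1)^{d-i}a_{d+1-i,j}$, the factorization $B=[\,I_d\mid SRA\,]$, the Laplace expansion along the identity columns $e_c$ with $c\in K_1$, the complement identity $[d]\setminus K_1=d+1-I$, and the two sign extractions $\prod_{i\in I}(-1)^{i-1}$ and $\varepsilon_R=(-1)^{\binom{k}{2}}$ with $k=|I|$ are all right. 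The one genuine gap is at the crux: the lemma \emph{is} the assertion that $\varepsilon_L\cdot\prod_{i\in I}(-1)^{i-1}\cdot\varepsilon_R=1$, and you support it only by a $d=m=2$ sanity check plus the expectation that the signs in $\phi_{d,m}$ were designed to cancel. A priori this product could depend on $d$, on $k$, and on the placement of $I$ inside $[d]$, so a single low-rank check establishes nothing in general; as written, the proof is incomplete exactly where the content lies.

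The good news is that your own bookkeeping closes the gap in a few lines, so the step does not fail---it just needs to be carried out. Peeling off the identity columns one at a time gives $\varepsilon_L=(-1)^{\sum_{c\in K_1}c-\binom{d-k+1}{2}}$, and since $K_1=\{d+1-x : x\in[d]\setminus I\}$ one computes $\sum_{c\in K_1}c=(d-k)(d+1)-\tfrac{d(d+1)}{2}+\sum_{i\in I}i$. The term $\sum_{i\in I}i$ cancels modulo $2$ against the exponent of $\prod_{i\in I}(-1)^{i-1}=(-1)^{\sum_{i\in I}i-k}$, so the total exponent depends only on $d$ and $k$ --- a structural fact worth stating, since it explains why no choice of $I$ can spoil the sign. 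Writing $n=d-k$ and $T_r=\binom{r+1}{2}$, the total exponent is congruent mod $2$ to $n(d+1)+T_d+T_n+k+\binom{k}{2}=n(n+k+1)+T_{n+k}+T_n+T_k$, and the identity $T_{n+k}=T_n+T_k+nk$ reduces this to $n^2+n+2nk\equiv n(n+1)\equiv 0\pmod 2$. Hence the net sign is $+1$ for every $d$, $I$, $J$, and $\Delta_{I,J}(A)=\Delta_{(d+1-[d]\setminus I)\cup(d+J)}(B)$ follows; with this computation inserted, your argument is a complete and correct proof of the cited lemma.
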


The next lemma, which can be immediately argued by induction, is used in the proof of Proposition~\ref{prop:image of rational Dyck matrices are totally nonnegative}.

\begin{lemma} \label{lem:binary Young matrices are totally nonnegative}
	Every squared binary matrix whose zero entries form a Young diagram anchored in the upper-right corner is totally nonnegative.
\end{lemma}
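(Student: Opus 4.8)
The plan is to prove the slightly stronger statement that every such matrix has determinant in $\{0,1\}$, since total nonnegativity then follows at once: given any square submatrix $M[I,J]$ obtained by selecting rows $I$ and columns $J$ with $|I|=|J|$, the defining property that the zeros close upward and to the right is plainly inherited, so $M[I,J]$ is again a binary matrix whose zeros form an upper-right Young diagram, and hence $\Delta_{I,J}(M) = \det M[I,J] \geq 0$. Thus it suffices to control the determinant of a single $n \times n$ matrix $M$ of this shape. Writing $r_i$ for the number of $1$'s in row $i$, the Young condition is equivalent to saying that the $1$'s in each row are left-justified and that $r_1 \leq r_2 \leq \cdots \leq r_n$; in particular, the support of row $1$ is contained in the support of every lower row.

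I would then argue by induction on $n$. If $r_1 = 0$ the top row vanishes and $\det M = 0$, so assume $r_1 \geq 1$; then the first column is the all-ones vector. The key step is to subtract the first row from each of the rows $2, \dots, n$. Because the support $\{1,\dots,r_1\}$ of the first row is contained in the support $\{1,\dots,r_i\}$ of row $i$, each of these operations produces a row whose entries still lie in $\{0,1\}$ (its support becomes $\{r_1+1,\dots,r_i\}$), and the determinant is unchanged. After these operations the first column has become the standard vector $e_1$, so expanding along it gives $\det M = \det M''$, where $M''$ is the $(n-1)\times(n-1)$ matrix supported on rows $2,\dots,n$ and columns $2,\dots,n$. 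If $r_1 \geq 2$, then the first column of $M''$ is identically zero, whence $\det M = 0$; if $r_1 = 1$, then $M''$ is exactly the submatrix of $M$ on rows and columns $2,\dots,n$, which again has its zeros forming an upper-right Young diagram, so $\det M = \det M'' \in \{0,1\}$ by the inductive hypothesis. In either case $\det M \in \{0,1\}$, completing the induction.

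The one point that requires care—and the reason for performing the row operations at all—is that a direct cofactor expansion along the all-ones first column does not obviously yield a nonnegative answer, since the cofactors enter with alternating signs. The role of the preliminary reductions is precisely to arrange that only a single entry in the pivot column survives, so that the signed expansion collapses to a single positive term. The remaining bookkeeping—verifying that subtracting the top row keeps the matrix binary and that the surviving $(n-1)\times(n-1)$ block is either degenerate or again of Young type—is routine and follows directly from the monotonicity $r_1 \leq \cdots \leq r_n$.
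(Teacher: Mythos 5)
Your proof is correct, and it takes exactly the route the paper has in mind: the paper offers no written proof, stating only that the lemma ``can be immediately argued by induction,'' and your argument---reduce to the determinant of a single Young-type matrix, clear the all-ones first column by subtracting the top row, and recurse on the $(n-1)\times(n-1)$ block, with closure under taking submatrices giving total nonnegativity---is precisely that induction carried out in full. No gaps: the case split on $r_1$ and the observation that the minor-shape is inherited by arbitrary row/column selections are both verified correctly.
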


For a set $S$ and an integer $k$, we let $\binom{S}{k}$ denote the collection consisting of all subsets of $S$ of cardinality $k$, and we call an element of $\binom{S}{k}$ a $k$-\emph{subset} of $S$.

\begin{prop} \label{prop:image of rational Dyck matrices are totally nonnegative}
	The inclusion $\phi_{d,m}(\mathcal{D}_{d,m}) \subseteq \emph{Mat}_{d,d+m}^+(\rr)$ holds.
\end{prop}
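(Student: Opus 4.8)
The plan is to reduce the claim to the total nonnegativity of rational Dyck matrices, which will then follow from Lemma~\ref{lem:binary Young matrices are totally nonnegative}. Fix a rational Dyck matrix $D \in \mathcal{D}_{d,m}$ and set $B = \phi_{d,m}(D)$. Because the left $d \times d$ block of $B$ is the identity matrix, one has $\Delta_{[d]}(B) = 1 \neq 0$, so $B$ already has full rank $d$; it therefore remains only to check that every maximal minor $\Delta_K(B)$, with $K \in \binom{[d+m]}{d}$, is nonnegative.

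To translate these maximal minors into minors of $D$, I would invoke Lemma~\ref{lem:correspondence between totally nonnegative square and rectangular matrix}. The assignment $(I,J) \mapsto K := \{d+1-\ell : \ell \in [d]\setminus I\} \cup \{d+j : j \in J\}$ is a bijection from the set of pairs $(I,J)$ with $I \subseteq [d]$, $J \subseteq [m]$, and $|I| = |J|$ onto $\binom{[d+m]}{d}$: given $K$, one splits it as $K_1 = K \cap [d]$ and $K_2 = K \cap \{d+1,\dots,d+m\}$ and recovers $J = \{j : d+j \in K_2\}$ and $I = [d]\setminus\{d+1-k : k \in K_1\}$. Consequently $\Delta_K(B) = \Delta_{I,J}(D)$ for the unique corresponding pair, and the task reduces to proving $\Delta_{I,J}(D) \geq 0$ for all such $I,J$, that is, to showing that $D$ is totally nonnegative.

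Finally, I would establish the structural fact that every square submatrix of a rational Dyck matrix is again a binary matrix whose zero entries form a Young diagram anchored in the upper-right corner. The defining shape of $D = (a_{i,j})$ gives the monotonicity property that $a_{i,j} = 0$ forces $a_{i',j'} = 0$ whenever $i' \leq i$ and $j' \geq j$. Selecting rows $i_1 < \dots < i_k$ and columns $j_1 < \dots < j_k$ and relabeling them $1,\dots,k$ in increasing order preserves exactly these two inequalities, so the resulting $k \times k$ submatrix inherits the upper-right-anchored Young diagram shape. Lemma~\ref{lem:binary Young matrices are totally nonnegative} then applies, showing that this submatrix is totally nonnegative and hence that $\Delta_{I,J}(D) \geq 0$. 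Combined with the bijection of the previous step, all maximal minors of $B$ are nonnegative, and together with full rank this gives $B \in \text{Mat}_{d,d+m}^+(\rr)$.

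I expect the main obstacle to be essentially bookkeeping: verifying that the index correspondence of Lemma~\ref{lem:correspondence between totally nonnegative square and rectangular matrix} is genuinely surjective onto $\binom{[d+m]}{d}$, so that no maximal minor of $B$ escapes the reduction. Once that is in hand, the hereditary property of the staircase shape and the appeal to Lemma~\ref{lem:binary Young matrices are totally nonnegative} are routine, and the signs and row reversal built into $\phi_{d,m}$ never need to be analyzed directly, since they are absorbed into Lemma~\ref{lem:correspondence between totally nonnegative square and rectangular matrix}.
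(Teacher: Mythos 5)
Your proof is correct, but it is organized differently from the paper's. The paper does not prove that $D$ itself is totally nonnegative and does not use the surjectivity of the index correspondence in Lemma~\ref{lem:correspondence between totally nonnegative square and rectangular matrix}; instead, for each column set $S \in \binom{[d+m]}{d}$ it builds an auxiliary $d \times d$ matrix $B_J$ by padding the selected columns $A_{j_k}$ (those with index greater than $d$) with copies of the alternating-sign vector $((-1)^{d-1},\dots,-1,1)^t$, observes that $(I_d \mid B_J)$ is the image under $\phi_{d,d}$ of a square binary matrix with upper-right-anchored Young-shaped zeros, and then applies Lemma~\ref{lem:correspondence between totally nonnegative square and rectangular matrix} together with Lemma~\ref{lem:binary Young matrices are totally nonnegative} to conclude that the single maximal minor $\det A'$ is nonnegative. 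You instead apply Lemma~\ref{lem:correspondence between totally nonnegative square and rectangular matrix} once to $\phi_{d,m}(D)$ itself, verify that $(I,J) \mapsto \big(d+1-([d]\setminus I)\big) \cup (d+J)$ is a bijection onto $\binom{[d+m]}{d}$ so that every maximal minor of $B$ is accounted for, and reduce everything to total nonnegativity of $D$, which you get from the hereditary observation that any square submatrix of $D$ (rows and columns taken in increasing order) inherits the monotone vanishing pattern and is again a binary Young-shaped matrix covered by Lemma~\ref{lem:binary Young matrices are totally nonnegative}. Your route is arguably cleaner: it isolates the genuinely matroid-theoretic content (rational Dyck matrices are totally nonnegative, extending the classical fact cited for square Dyck matrices) and replaces the paper's ad hoc padding construction with a one-line bijection check; the paper's route buys the freedom never to verify surjectivity of the index map, at the cost of a per-minor auxiliary matrix. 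One trivial boundary case you should note explicitly: $K = [d]$ corresponds to $I = J = \emptyset$, where $\Delta_{\emptyset,\emptyset}(D) = 1$ by the usual empty-minor convention, matching $\Delta_{[d]}(B) = \det I_d = 1$.
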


\begin{proof}
	Take $D \in \mathcal{D}_{d,m}$, and set $A = \phi_{d,m}(D)$. As $A$ has obviously full rank, it suffices to verify that each of its maximal minors is nonnegative. For $S \in \binom{[d+m]}{d}$ let $A'$ be the submatrix of $A$ determined by the set of columns indexed by $S$. Set $I = S \cap [d]$ and $J = \{j_1, \dots, j_{|S \setminus I|}\} = S \! \setminus \! I$ with $j_1 < \dots < j_{|J|}$. Note that $|J| \le d$. Let $B_J$ be the $d \times d$ matrix whose first $|I|$ columns are all equal to the vector $((-1)^{d-1}, \dots, -1, 1)^t$ and whose $(|I|+k)$-th column is equal to $A_{j_k}$ for $1 \le k \le |J|$. Notice now that Lemma~\ref{lem:binary Young matrices are totally nonnegative} ensures that the matrix $B = (I_d \mid B_J)$ is the image under $\phi_{d,d}$ of a totally nonnegative matrix of size $d$. As Dyck matrices are totally nonnegative, Lemma~\ref{lem:correspondence between totally nonnegative square and rectangular matrix} ensures that every maximal minor of $B$ is nonnegative. In particular, the maximal minor $\det A'$ of $B$ is nonnegative. Hence $A \in \text{Mat}_{d,d+m}^+(\rr)$, as desired.
\end{proof}

Proposition~\ref{prop:image of rational Dyck matrices are totally nonnegative} will allow us to produce positroids from rational Dyck paths. First, we introduce the formal definition of a positroid.

Let $E$ be a finite set, and let $\mathcal{B}$ be a nonempty collection of subsets of $E$. The pair $M = (E, \mathcal{B})$ is a \emph{matroid} if for all $B,B' \in \mathcal{B}$ and $b \in B \setminus B'$, there exists $b' \in B' \setminus B$ such that $(B \setminus \{b\}) \cup \{b'\} \in \mathcal{B}$. The set $E$ is called the \emph{ground set} of $M$, while the elements of $\mathcal{B}$ are called \emph{bases}. Any two bases of $M$ have the same cardinality, which we call the \emph{rank} of $M$.

\begin{definition}
	A rank $d$ matroid $([m], \mathcal{B})$ is called a \emph{positroid} if there exists $A \in \text{Mat}_{d,m}^+(\rr)$ with columns $A_1, \dots, A_m$ such that
	\[
		\mathcal{B} = \big\{B \subseteq [m] \mid \{ A_b \mid b \in B \} \  \text{is a basis for} \ \mathbb{R}^d \big\}.
	\]
	In this case, we say that the matrix $A$ \emph{represents} the positroid $([m], \mathcal{B})$.
\end{definition}

\begin{example} \label{ex:positroid}
	The matrix $A$ below has rank $5$, and all its maximal minors are nonnegative, so it represents a rank $5$ positroid $P$ on the ground set $[12]$.
	\[
		\setcounter{MaxMatrixCols}{20}
		A =\begin{pmatrix}
			1  & 1  & 0 & 0 & 0 & 0 & 0 & 0 &-1 & 0 & 2 & 1 \\
			0  & 0 & 1 & 0 & 0 & 0 & 1  & 2 & 1 & 0 &-1 & 0 \\
			0  & 0 & 0 & 1 & 1  & 0 & 0 & 0 & 0 & 0 & 0 & 0 \\
			0  & 0 & 0 & 0 & 0 & 1 & 1  & 1 & 0 & 0 & 0 & 0 \\
			0  & 0 & 0 & 0 & 0 & 0 & 0 & 0 & 0 & 1 & 1 & 0 
		\end{pmatrix}
	\]
	Notice that $B = \{1,3,4,6,11\}$ is a basis of $P$ as $\Delta_B(A) \neq 0$ while $C = \{1,2,3,4,5\}$ is not a basis because $\Delta_C(A) = 0$.
\end{example}

In virtue of Proposition~\ref{prop:image of rational Dyck matrices are totally nonnegative} every matrix in $\phi_{d,m}(\mathcal{D}_{d,m})$ represents a rank $d$ positroid on the ground set $[d+m]$.

\begin{definition}
	We call a positroid represented by a matrix in $\phi_{d,m}(\mathcal{D}_{d,m})$ a \emph{rational Dyck positroid}, and we denote by $\mathcal{P}_{d,m}$ the set of all rank $d$ rational Dyck positroids on the ground set $[d+m]$.
\end{definition}

Rational Dyck positroids have very nice combinatorial features, which we shall confirm later when we describe their corresponding Grassmann necklaces, decorated permutations, \reflectbox{L}-diagrams, and classes of plabic graphs. For now let us show that rational Dyck positroids are connected matroids.

\begin{definition}
	 A matroid $(E, \mathcal{B})$ is said to be \emph{connected} if for every $b, b' \in E$ there exist $B,B' \in \mathcal{B}$ such that $B' = (B \setminus \{b\}) \cup \{b'\}$.
\end{definition}

\begin{prop}
	Every rational Dyck positroid is connected.
\end{prop}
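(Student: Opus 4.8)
The plan is to argue directly from the representing matrix $B = \phi_{d,m}(D)$, whose columns I denote $c_1, \dots, c_{d+m}$; thus $c_1 = e_1, \dots, c_d = e_d$ are the standard basis vectors of $\rr^d$, while for $1 \le j \le m$ the column $c_{d+j}$ comes from the $j$-th column of $D$ via the alternating signs defining $\phi_{d,m}$. First I would record the precise shape of the last $m$ columns. Because the zeros of $D$ form a Young diagram anchored in the upper-right corner, each column of $D$ consists of a bottom block of $1$'s; writing $t_j$ for the number of $1$'s in column $j$, a direct computation gives
\[
	c_{d+j} = \sum_{i=1}^{t_j} (-1)^{d-i}\, e_i ,
\]
the truncation to its first $t_j$ coordinates of the alternating vector $((-1)^{d-1}, \dots, -1, 1)^t$. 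Right-justification of the diagram of zeros gives $t_1 \ge t_2 \ge \dots \ge t_m$.

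I would then extract from the defining inequality of a rational Dyck path the two facts about the $t_j$ that connectivity requires. Since any initial vertical step would immediately carry the path above $y = (d/m)x$, the path leaves $(0,0)$ horizontally, so the leftmost column of $D$ lies entirely in the region of $1$'s and $t_1 = d$. Since $(m-1,d)$ lies strictly above the line, the path enters $(m,d)$ by a vertical step, so the rightmost column of $D$ contains a $1$ and $t_m \ge 1$; with monotonicity this gives $t_j \ge 1$ for all $j$, so $B$ has no zero column and the positroid has no loops. I expect this translation of the diagonal constraint into the statements $t_1 = d$ and $t_j \ge 1$ to be the step demanding the most care, as it is precisely where the definition of a rational Dyck path is used.

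The connectivity now follows from a single family of circuits. For each $j$ the relation $c_{d+j} = \sum_{i=1}^{t_j} (-1)^{d-i} e_i$ shows that $C_j := \{1, \dots, t_j, d+j\}$ is dependent, and deleting any one of its elements leaves an independent set (the surviving $e_i$'s together with $c_{d+j}$, whose support supplies the one missing coordinate); hence each $C_j$ is a circuit. Because $t_j \ge 1$, every $C_j$ contains the element $1$ together with $d+j$, so each $d+j$ shares a circuit with $1$; because $t_1 = d$, the circuit $C_1 = \{1, \dots, d, d+1\}$ contains all of $1, \dots, d$, so each element of $[d]$ shares a circuit with $1$ as well. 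Thus all elements of $[d+m]$ are joined to $1$ through circuits.

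Finally I would pass to the definition of connectedness used here. It is standard that on any matroid the relation ``$b=b'$ or $b,b'$ lie in a common circuit'' is an equivalence relation whose classes are the connected components, so the previous paragraph forces a single class and hence a common circuit through every pair $b \ne b'$. For such a pair, extending the independent set $C \setminus \{b'\}$ (for a circuit $C$ containing $b$ and $b'$) to a basis $B_0$ gives $b' \notin B_0$ by minimality of $C$, and the unique circuit dependence then makes $(B_0 \setminus \{b\}) \cup \{b'\}$ a basis, which is exactly the required exchange; the case $b = b'$ needs only the absence of loops, already shown. Throughout I assume $d,m \ge 1$, the cases $d = 0$ or $m = 0$ being degenerate.
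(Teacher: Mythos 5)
Your proof is correct, and it succeeds by a mildly different route than the paper's. The paper's proof works directly with the basis-exchange relation: it defines $b \sim b'$ to mean there exist bases $B, B'$ with $B' = (B \setminus \{b\}) \cup \{b'\}$, asserts that this is an equivalence relation, and then exhibits the needed exchanges explicitly --- $[d]$ and $([d] \setminus \{i\}) \cup \{d+1\}$ are both bases, giving $i \sim d+1$ for all $i \in [d]$, and for $j \in [m]$, writing $n$ for the number of nonzero entries of column $d+j$, the sets $[d]$ and $([d] \setminus \{n\}) \cup \{d+j\}$ are both bases, giving $n \sim d+j$; transitivity then yields a single class. Your circuits $C_j = \{1, \dots, t_j, d+j\}$ encode exactly the same linear dependencies (your $t_j$ is the paper's $\omega_A(d+j)$), so the underlying linear algebra is identical; the difference is the packaging: you invoke the textbook facts that the common-circuit relation is an equivalence relation whose classes are the connected components, and that exchanging along a fundamental circuit produces a basis, and you then translate back into the paper's exchange-based definition of connectedness. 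Your version buys two things the paper leaves implicit: it isolates precisely where the Dyck condition enters (the first step of the path is horizontal, so $t_1 = d$ and column $d+1$ has full support --- exactly what the paper's unexplained claim that $([d] \setminus \{i\}) \cup \{d+1\}$ is a basis for \emph{every} $i$ requires --- and $t_j \ge 1$ for all $j$, so there are no loops), and it replaces the paper's unproved ``it follows immediately that $\sim$ is an equivalence relation'' (transitivity is the nontrivial point) with a standard theorem of matroid theory. The cost is the extra circuit machinery and the final translation step. One cosmetic nit: $b' \notin B_0$ holds because $b' \in B_0$ would force $C \subseteq B_0$, putting a dependent set inside a basis; the operative property is that $C$ is dependent, not its minimality.
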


\begin{proof}
	Let $P$ be a rank $d$ rational Dyck positroid on the ground set $[d+m]$ represented by a matrix $(I_d \mid A)$ in $\phi_{d,m}(\mathcal{D}_{d,m})$. It follows immediately that
	\[
		b \sim b' \ \iff  \text{ there exist } B,B' \in \mathcal{B} \ \text{ such that} \ B' = (B \setminus \{b\}) \cup \{b'\}
	\]
	defines an equivalence relation on $[d+m]$. Notice that $i \sim d+1$ for every $i \in [d]$ as $[d]$ and $([d] \setminus \{i\}) \cup \{d+1\}$ are both bases of $P$. Hence there is an equivalence class $C$ containing the set $[d+1]$. In addition, if $j \in [m]$ and the $j$-th column of $A$ has $n \in [d]$ nonzero entries, then $n \sim d+j$ because both $[d]$ and $([d] \setminus \{n\}) \cup \{d+j\}$ are bases of $P$. Thus, $C = [d+m]$ and the proof follows.
\end{proof}

\section{Grassmann Necklaces} \label{sec:grassmann necklace}

We proceed to introduce the first family of combinatorial objects which can be used to parameterize positroids: the Grassmann necklaces. Then we will describe the Grassmann necklaces corresponding to rational Dyck positroids.

\begin{definition}
	An $n$-tuple $(I_1, \dots, I_n)$ of ordered $d$-subsets of $[n]$ is called a \emph{Grassmann necklace} of type $(d,n)$ if for every $i \in [n]$ the following two conditions hold:
	\begin{itemize}
		\item $i \in I_i$ implies $I_{i+1} = (I_i \setminus \{i\}) \cup \{j\}$ for some $j \in [n]$;
		\item $i \notin I_i$ implies $I_{i+1} = I_i$.
	\end{itemize}
\end{definition}

For $i \in [n]$, the total order $([n], \le_i)$ is defined by $i \le_i \dots \le_i n \le_i 1 \le_i \dots \le_i i-1$. Given a matroid $M = ([n], \mathcal{B})$ of rank $d$, one can define the sequence $\mathcal{I}(M) = (I_1, \dots, I_n)$, where $I_i$ is the lexicographically minimal ordered basis of $M$ with respect to the order $\le_i$. The sequence $\mathcal{I}(M)$ is a Grassmann necklace of type $(d,n)$ (see \cite{aP06}). Moreover, when $M$ is a positroid, we can recover $M$ from $\mathcal{I}(M)$ as we will describe now. For $i \in [n]$, consider the partial order $\preceq_i$ on $\binom{[n]}{d}$ defined in the following way: if $S = \{s_1 \le_i \dots \le_i s_d\}$ and $T = \{t_1 \le_i \dots \le_i t_d\}$ are subsets of $[n]$, then $S \preceq_i T$ if $s_j \le_i t_j$ for each $j \in [d]$.

\begin{theorem}\cite[Theorem~6]{sO11} \label{thm:bijection between Grassmann necklaces and positroids}
	If $\mathcal{I} = (I_1, \dots, I_n)$ is a Grassmann necklace of type $(d,n)$, then
	\[
		\mathcal{B}(\mathcal{I}) = \bigg\{B \in \binom{[n]}{d} \ \bigg{|} \ I_j \preceq_j B \ \text{for each} \ j \in [n] \bigg\}
	\]
	is the collection of bases of a positroid $M(\mathcal{I}) = ([n], \mathcal{B}(\mathcal{I}))$. Moreover, $M(\mathcal{I}(M)) = M$ for all positroids $M$.
\end{theorem}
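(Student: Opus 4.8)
The plan is to split the theorem into two essentially independent assertions: first, that the family $\mathcal{B}(\mathcal{I})$ obeys the matroid basis-exchange axiom, so that $M(\mathcal{I})$ is a genuine matroid which one then checks is realizable (hence a positroid); and second, that the assignments $M \mapsto \mathcal{I}(M)$ and $\mathcal{I} \mapsto M(\mathcal{I})$ are mutually inverse, the crux being the identity $M(\mathcal{I}(M)) = M$.

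For the first assertion I would recast each defining inequality in terms of a classical object. Fixing $i$, the linear order $\le_i$ together with the dominance order $\preceq_i$ makes the single-index family $\mathcal{SM}_i = \{B \in \binom{[n]}{d} : I_i \preceq_i B\}$ into the base family of a \emph{Schubert matroid} (a nested or shifted matroid), a fact I would take as classical. Thus $\mathcal{B}(\mathcal{I}) = \bigcap_{i=1}^n \mathcal{SM}_i$ is an intersection of $n$ cyclically shifted Schubert matroids. Nonemptiness I would obtain by showing that each necklace element lies in the intersection: the two necklace axioms describe exactly how $I_{i+1}$ is obtained from $I_i$ (either unchanged, or by removing the $\le_i$-minimal index $i$ and inserting one new element), and an induction around the cycle then yields $I_j \preceq_j I_i$ for every pair $i,j$, so that $I_i \in \mathcal{B}(\mathcal{I})$.

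The genuinely delicate step is the exchange axiom for the intersection, since intersections of matroids need not be matroids. Given $B, C \in \mathcal{B}(\mathcal{I})$ and $b \in B \setminus C$, I would need a single $c \in C \setminus B$ such that $(B \setminus \{b\}) \cup \{c\}$ continues to $\preceq_j$-dominate $I_j$ in every one of the $n$ cyclic orders simultaneously; the necklace compatibility conditions are precisely what forces the local exchanges guaranteed by each individual Schubert matroid to be realizable by one common element. I expect this simultaneous control of all $n$ Gale inequalities to be the main obstacle, and I would attack it either by a case analysis on the position of $b$ relative to the ``jumps'' of the necklace, or, alternatively, bypass it entirely by exhibiting an explicit totally nonnegative matrix whose nonvanishing maximal minors are indexed exactly by $\mathcal{B}(\mathcal{I})$ and invoking reasoning of the kind used in Proposition~\ref{prop:image of rational Dyck matrices are totally nonnegative}, which would establish matroid-ness and positroid-ness at once.

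For the bijection, the inclusion $\mathcal{B}(M) \subseteq \mathcal{B}(\mathcal{I}(M))$ is soft and holds for every matroid: since $I_j$ is by definition the $\le_j$-lexicographically minimal basis of $M$, the greedy (Gale) characterization of lexicographically minimal bases gives $I_j \preceq_j B$ for every basis $B$. The reverse inclusion $\mathcal{B}(\mathcal{I}(M)) \subseteq \mathcal{B}(M)$ is where positroid-ness is indispensable, as it fails for general matroids; I would prove it from a totally nonnegative matrix $A$ representing $M$, arguing that any $B$ satisfying all the dominance inequalities has $\Delta_B(A) \neq 0$ because total nonnegativity forces a monotonicity on which maximal minors vanish along each cyclic order. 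Combining the two inclusions yields $M(\mathcal{I}(M)) = M$, and the complementary identity $\mathcal{I}(M(\mathcal{I})) = \mathcal{I}$ then follows from the nonemptiness and $\preceq_i$-minimality of the necklace elements established above.
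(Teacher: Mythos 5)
You should first note that the paper contains no proof of this statement at all: it is imported verbatim from Oh \cite[Theorem~6]{sO11}, so the relevant benchmark is Oh's proof, whose top-level architecture your outline reproduces faithfully. The per-index families $\mathcal{SM}_i$ are indeed cyclically shifted Schubert matroids, Gale's greedy/optimality theorem does give $\mathcal{B}(M) \subseteq \mathcal{B}(\mathcal{I}(M))$ for an arbitrary matroid, and the necklace axioms do yield $I_j \preceq_j I_i$ for all $i,j$, hence nonemptiness. The genuine gap is that both load-bearing steps are announced rather than carried out. For the claim that $\mathcal{B}(\mathcal{I})$ is the basis family of a positroid you offer a disjunction of two unexecuted strategies: a case analysis establishing the exchange axiom directly, or the construction of a totally nonnegative matrix whose nonvanishing maximal minors are exactly the sets in $\mathcal{B}(\mathcal{I})$. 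The second option is precisely what Oh does --- he passes from $\mathcal{I}$ to its decorated permutation, then to the corresponding \reflectbox{L}-diagram network of Postnikov \cite{aP06}, and proves via a Lindstr\"om--Gessel--Viennot analysis of vertex-disjoint path families that the positive maximal minors of the resulting matrix are exactly the $B$ with $I_j \preceq_j B$ for all $j$ --- and that analysis constitutes the bulk of his paper; naming it as a possible route is not a proof of it. The first option is not known to be viable as a direct argument: for an arbitrary $n$-tuple of $d$-subsets the intersection of the $n$ shifted Schubert matroids fails exchange, and nothing in your sketch identifies which consequence of the necklace axioms rescues it, beyond the (correct but insufficient) observation that it must.

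Your justification for the hard inclusion $\mathcal{B}(\mathcal{I}(M)) \subseteq \mathcal{B}(M)$ --- that ``total nonnegativity forces a monotonicity on which maximal minors vanish along each cyclic order'' --- is false if read order-by-order. Take the totally nonnegative matrix $A = \left(\begin{smallmatrix} 1 & 1 & 1 & 0 \\ 0 & 1 & 1 & 1 \end{smallmatrix}\right)$: here $I_1 = \{1,2\}$ and $B = \{2,3\}$ satisfies $I_1 \preceq_1 B$, yet $\Delta_B(A) = 0$. This is consistent with the theorem because $B$ fails the dominance condition at $j=2$, where $I_2 = \{2,4\}$; but it shows that no single cyclic order controls vanishing, so the nonvanishing of $\Delta_B(A)$ can only come from the simultaneous dominance at all $n$ indices --- and proving that simultaneous dominance forces $\Delta_B(A) \neq 0$ is exactly the hard half of \cite[Theorem~6]{sO11}, not something obtainable from per-order monotonicity. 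The surrounding scaffolding in your proposal --- the Schubert-matroid decomposition, nonemptiness via $I_j \preceq_j I_i$, and the formal derivation of $\mathcal{I}(M(\mathcal{I})) = \mathcal{I}$ from the $\preceq_i$-minimality of the $I_i$ once matroid-ness is granted --- is correct and matches the literature, but as written the proposal defers the theorem's two essential claims rather than proving them.
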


By Theorem~\ref{thm:bijection between Grassmann necklaces and positroids}, the map $P \mapsto \mathcal{I}(P)$ is a one-to-one correspondence between the set of rank $d$ positroids on the ground set $[n]$ and the set of Grassmann necklaces of type $(d,n)$. For a positroid $P$, we call $\mathcal{I}(P)$ its \emph{corresponding} Grassmann necklace.

\begin{example}
	Let $P$ be the rank $5$ positroid on the ground set $[12]$ introduced in Example~\ref{ex:positroid}, and let $\mathcal{I} = \mathcal{I}(P) = (I_1,\dots, I_{12})$ be the Grassmann necklace of type $(5,12)$ corresponding to $P$. We can see, for instance, that $I_1 = (1,3,4,6,10)$ and $I_{10} = (10,11,12,4,6)$.
\end{example}

Let $D$ be a $d \times m$ rational Dyck matrix, and set $A = (a_{i,j}) = \phi_{d,m}(D)$. Then we define the \emph{set of principal indices} $I_A$ of $A$ to be
\[
	I_A = \{i \in \{d+1, \dots, d+m\} \mid A_i \neq A_{i-1}\},
\]
where $A_i$ denotes the $i$-th column of $A$. In addition, we associate to the matrix $A$ the \emph{weight map} $\omega_A \colon [d+m] \to [d]$ defined by
\[
	\omega_A(j) = \max\{i \mid a_{i,j} \neq 0\}
\]
(notice that there is at least a nonzero entry in each column of $A$). For $d+1 \le j \le d+m$, we observe that the number of nonzero entries in the column~$A_j$ is precisely $\omega_A(j)$.

We are in a position to describe the Grassmann necklaces corresponding to rational Dyck positroids. We will use this description in Section~\ref{sec:positroid polytope} to describe the matroid polytopes of rational Dyck positroids.

\begin{prop} \label{prop:grassmann necklace description}
	Let $P$ be a rational Dyck positroid represented by $A \in \phi_{d,m}(D_{d,m})$, and let $I_A = \{ p_1 < \dots < p_t\}$ and $E_A = \{q_1, \dots, q_u\} = [d] \! \setminus \! \{\omega_A(i) \mid d < i \le d+m\}$. The Grassmann necklace $\mathcal{I}(P) = (I_1, \dots, I_{d+m})$ of $P$, where $I_j = (a^j_1, \dots, a^j_d)$, is characterized as follows.
	\begin{enumerate}
		\item $I_1 = (1,2,\dots,d)$. \vspace{6pt}
		\item If $j \in [d] \! \setminus \! \{1\}$ and $(d-j+1) + |\{p_i \mid \omega_A(p_i) < j-1\}| \ge d$, then $a^j_i = j+i-1$ for $i = 1, \dots, d-j+2$, while $a^j_{d-j+2 + i} = p_{s+i}$ for $s = \max\{k \mid \omega_A(k) \ge j-1\}$ and $i=1, \dots, j-2$. \vspace{6pt}
		\item If $j \in [d] \! \setminus \! \{1\}$ and $(d-j+1) + |\{p_i \mid \omega_A(p_i) < j-1\}| < d$, then $a^j_i = j+i-1$ for $i = 1, \dots, d-j+2$, while $a^j_{d-j+2 + i} = p_{s+i}$ for $s = \max\{k \mid \omega_A(k) \ge j-1\}$ and $i = 1, \dots, t-s$; also, $a^j_{d-j+2+i} = q_{i - (t-s)}$ for $i = t-s+1, \dots, j-2$. \vspace{6pt}
		\item If $j \in \{d+1, \dots, d+m\}$, then $a^j_1 = j$ and $a^j_i = p_{s+i-1}$ for $s = \max\{k \mid p_k \le  j\}$ and $i=2,\dots,t-s+1$ while $a^j_i = q_{i - (t-s+1)}$ for $i = t-s+2, \dots, d+m$.
	\end{enumerate}
\end{prop}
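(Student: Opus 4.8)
The plan is to compute each entry $I_j$ of the Grassmann necklace $\mathcal{I}(P)$ directly from its definition as the lexicographically $\le_j$-minimal basis of $P$, exploiting the standard fact that this basis is produced by the greedy algorithm: reading the ground set in the order $j \le_j j+1 \le_j \cdots \le_j d+m \le_j 1 \le_j \cdots \le_j j-1$, one appends an index to the basis under construction precisely when the corresponding column of $A$ is not in the span of the columns already chosen. First I would record the structural features of $A = \phi_{d,m}(D)$ that drive this computation: its first $d$ columns are the standard vectors $e_1, \dots, e_d$; for $1 \le k \le m$ the column $A_{d+k}$ is supported exactly on rows $1, \dots, \omega_A(d+k)$; and since the zero entries of $D$ form a Young diagram anchored in the upper-right corner, the weights $\omega_A(d+1) \ge \cdots \ge \omega_A(d+m)$ are weakly decreasing and drop strictly exactly at the principal indices $p_1 < \cdots < p_t$, so that $\omega_A(p_1) > \cdots > \omega_A(p_t)$ and $\{\omega_A(p_i)\}_{i=1}^{t}$ is the set of all weights of the last $m$ columns, whose complement in $[d]$ is $E_A$.

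The linear-algebra input is the elementary observation that vectors with pairwise distinct lowest nonzero rows are linearly independent; the signs $(-1)^{d-i}$ in $\phi_{d,m}$ play no role here, as independence of a column set is unaffected by them. Consequently two of the last $m$ columns coincide iff they share a weight, and any family of columns with pairwise distinct lowest nonzero rows exhausting $[d]$ is a basis. With these facts I would run the greedy algorithm in each of the four regimes. For $j = 1$ the scan reads $e_1, \dots, e_d$ and stops, giving $I_1 = (1, \dots, d)$. For $2 \le j \le d$ the scan first collects $e_j, \dots, e_d$ and then the last-$m$ columns in order of decreasing weight, appending those that enlarge the span; here the weight monotonicity forces the selected indices to be an initial principal index together with the principal indices of small weight, and the two subcases (2) and (3) are separated by whether these columns already fill the missing coordinate rows $1, \dots, j-1$ or whether the scan must wrap around and finish with standard vectors indexed by $E_A$. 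For $d < j \le d+m$ the scan opens with $A_j$, continues through the principal indices exceeding $j$, and then wraps to the standard vectors.

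The main obstacle I anticipate is the careful bookkeeping of the wrap-around step: determining exactly which standard vectors $e_r$ are appended once the scan passes $d+m$ and returns to $1, 2, \dots$, and matching them with the elements $q_i$ of $E_A$ in the stated positions. The delicate point is that a standard vector $e_r$ and a last-$m$ column can share the lowest nonzero row $r$ (precisely when $r \notin E_A$), so the distinct-lowest-row criterion does not by itself decide whether $e_r$ increases the rank; here I would invoke Lemma~\ref{lem:correspondence between totally nonnegative square and rectangular matrix} to translate the relevant independence questions into the nonvanishing of minors of the Dyck matrix $D$, where the staircase pattern of $D$ makes the outcome transparent. Throughout, the monotonicity of the weights established in the first step is what guarantees that the greedily selected last-$m$ columns are exactly the principal indices appearing in the statement and keeps the wrap-around under control; verifying that the resulting tuples satisfy the two Grassmann-necklace axioms then serves as a consistency check on the four cases, with the whole computation pinned down as $\mathcal{I}(P)$ via Theorem~\ref{thm:bijection between Grassmann necklaces and positroids}.
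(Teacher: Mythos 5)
Your proposal follows essentially the same route as the paper's proof: the paper also computes each entry $I_j$ by running the greedy, lexicographic-minimality scan on the columns of $A = \phi_{d,m}(D)$, relying on exactly the structural facts you isolate (the first $d$ columns are the standard vectors, the last $m$ columns are top-justified with weakly decreasing weights that drop precisely at the principal indices, and equal columns occur in consecutive blocks with each $p_i$ leftmost in its block). The only cosmetic difference is that the paper settles the wrap-around independence questions directly from this column structure---completing the basis with the canonical vectors whose indices avoid the weights of the already-chosen columns, i.e., with the smallest elements of $E_A$---rather than detouring through minors of $D$ via Lemma~\ref{lem:correspondence between totally nonnegative square and rectangular matrix}, and it has no need of your closing consistency check via Theorem~\ref{thm:bijection between Grassmann necklaces and positroids}, since the greedy scan by definition produces the lex-minimal bases constituting $\mathcal{I}(P)$.
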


\begin{proof}
	The statement (1) is straightforward. Let us check (2). The lexicographical minimality of $I_j$ with respect to the order $\le_j$ implies that $a^j_i = j+i-1$ for $i = 1, \dots, d-j+1$ as the set $\{A_j, \dots, A_d\}$ consists of $d-j+1$ distinct canonical vectors and so is linearly independent. Also, $A_{d+1} \notin \text{span}(A_j, \dots, A_d)$, yielding $a^j_{d-j+2} = d+1$. Since $(d-j+1) + |\{p_i \mid \omega_A(p_i) < j-1\}| \ge d$, there are enough vectors in $\{A_{p_i} \mid 2 \le i \le t\}$ to complete the basis $I_j$. Here let us make two observations. If $\omega_A(p_i) \ge j-1$, then $A_{p_i}$ is a linear combination of the columns already chosen. As $A_i = A_{p_j}$ when $p_j \le i < p_{j+1}$ the minimality of $I_j$ forces us to complete the basis taking indices in $I_A$. Hence completing $I_j$ amounts to collecting the $j-2$ minimal elements in $I_A$ indexing columns with weights less than $j-1$.
	
	The first part of (3) follows similarly to (2); therefore we will only argue that $a^j_{d-j+2+i} = q_{i - (t-s)}$ for $i = t-s+1, \dots, j-2$. To do so we should take in a minimal way some vectors from $\{A_1, \dots, A_d\}$ to complete $I_j$; it suffices to take the first $j+s-t-2$ indices of $[d]$ which are not in the set $\{\omega_A(a^j_i) \mid \ 1 \le i \le (d-j+2) + (t-s)\}$. Those are precisely the first $j+s-t-2$ smallest elements of $E_A$.
	
	Finally, let us verify (4). Since every column of $A$ is different from the zero vector, $a_1^j = j$. The fact that $a^j_i = p_{s+i-1}$ when $i = 2, \dots, t-s+1$ is an immediate consequence of the minimality of $I_j$; this is because equal columns of $A$ are located consecutively and, for each $i \in [t]$, the column $A_{p_i}$ is located all the way to the left in the block of identical columns it belongs. The equalities $a^j_{d-j+2+i} = q_{i - (t-s)}$ can be argued in the same manner we did in the previous paragraph.
\end{proof}

\section{Decorated Permutations} \label{sec:decorated permutation}

Decorated permutations are generalized permutations that are in natural one-to-one correspondence with positroids. Like Grassmann necklaces, they are combinatorial objects that can be used to parameterize positroids; however, decorated permutations have the extra advantage of offering a more compact parameterization. In this section we characterize the decorated permutations corresponding to rational Dyck positroids.

\begin{definition}
	A \emph{decorated permutation} $\pi$ on $n$ letters is an element $\pi \in S_n$ in which fixed points $j$ are marked either ``clockwise"(denoted by $\pi(j)=\underline{j}$) or ``counterclockwise" (denoted by $\pi(j) = \overline{j}$). A position $j \in [n]$ is called a \emph{weak excedance} of $\pi$ if $j < \pi(j)$ or $\pi(j) = \overline{j}$.
\end{definition}

Following the next recipe, one can assign a decorated permutation $\pi_{\mathcal{I}}$ to each Grassmann necklace $\mathcal{I} = (I_1, \dots, I_n)$:
\begin{enumerate}
	\item if $I_{i+1} = (I_i \setminus \{i\}) \cup \{j\}$, then $\pi_{\mathcal{I}}(j) = i$; \vspace{3pt}
	\item if $I_{i+1} = I_i$ and $i \notin I_i$, then $\pi_\mathcal{I}(i) = \underline{i}$; \vspace{3pt}
	\item if $I_{i+1} = I_i$ and $i \in I_i$, then $\pi_\mathcal{I}(i) = \overline{i}$.
\end{enumerate}
Moreover, the map $\mathcal{I} \mapsto \pi_{\mathcal{I}}$ is a bijection from the set of Grassmann necklaces of type $(d,n)$ to the set of decorated permutations of $n$ letters having $d$ weak excedances. Indeed, it is not hard to verify that the map $\pi \mapsto (I_1, \dots, I_n)$, where
\[
	I_i = \{j \in [n] \mid j \le_i \pi^{-1}(j) \text{ or } \pi(j) = \bar{j}\},
\]
is the inverse of $\mathcal{I} \mapsto \pi_{\mathcal{I}}$. The \emph{corresponding} decorated permutation of a positroid $P$ is $\pi_{\mathcal{I}(P)}$, where $\mathcal{I}(P)$ is the corresponding Grassmann necklace of $P$.

\begin{example}
	Let $P$ be the rank $5$ positroid on the ground set $[12]$ introduced in Example~\ref{ex:positroid}, and let $\mathcal{I} = \mathcal{I}(P) = (I_1,\dots, I_{12})$ be the Grassmann necklace corresponding to $P$. After computing all the entries of $\mathcal{I}$, we can use the recipe described above to verify that $\pi_{\mathcal{I}}$ is, in fact, a permutation which has disjoint cycle decomposition $(1 \ 1\!2 \ 9 \ 2)(3 \ 1\!0 \ 1\!1 \ 7)(4 \ 5)(6 \ 8)$.
\end{example}

For $D \in \mathcal{D}_{d,m}$, set $A = (a_{i,j}) = \phi_{d,m}(D)$. Let $P$ be the positroid represented by $A$, and let $\pi$ be the decorated permutation corresponding to $P$. Note that if we remove one column from $A$ the resulting matrix still has rank $d$. Thus, for each $i \in [d+m]$ the $i$-th entry of the Grassmann necklace corresponding to $P$ does not contain $i-1$. As a result, $\pi$ has no fixed points, which implies that it is a standard permutation. The next proposition, whose proof follows \emph{mutatis mutandis} from that one of \cite[Proposition~4.3]{CG17}, gives an explicit description of the inverse of $\pi$.

\begin{lemma} (cf. \cite[Proposition~4.3]{CG17}) \label{lem:explicity function for decorated permutations}
	If $A, I_A$, $\omega_A$, and $\pi$ are defined as before, then for each $i \in [d+m]$,
	\[
		\pi^{-1}(i) = \fivepf{i+1}{d < i < d+m \text{ and } i+1 \notin I_A}{\omega_A(i)}{d < i \text{ and either } i = d+m \text{ or } i+1 \in I_A}{d+1}{i-1}{1 < i \le d \text{ and } \omega_A(j) \neq i-1 \ \text{for all} \ j \in I_A}{j}{1 < i \le d \text{ and } \{j\} = I_A \cap \omega_A^{-1}(i-1).}
	\]
\end{lemma}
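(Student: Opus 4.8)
The plan is to reduce the computation of $\pi^{-1}$ to a statement about cyclic spans of the columns of $A$, and then to exploit the rigid support structure that $\phi_{d,m}$ imposes on a rational Dyck matrix. Write $n = d+m$ and let $A_1, \dots, A_n$ be the columns of $A$. Since it has just been observed that $\pi$ has no fixed points, for every $i$ we are in the first case of the Grassmann-necklace-to-permutation recipe, so $\pi^{-1}(i)$ is the unique index $j$ with $I_{i+1} = (I_i \setminus \{i\}) \cup \{j\}$. The first step is to translate this into linear algebra: running the lexicographically minimal constructions of $I_i$ and of $I_{i+1}$ in the orders $\le_i$ and $\le_{i+1}$, the two greedy runs agree except that $A_i$ is moved from the front to the back, and a short ``blocking'' argument shows that $\pi^{-1}(i)$ equals the first index $j$ in the order $\le_{i+1}$ (that is, scanning $i+1, i+2, \dots, n, 1, \dots, i-1$) for which $A_i \in \mathrm{span}(A_{i+1}, \dots, A_j)$ while $A_i \notin \mathrm{span}(A_{i+1}, \dots, A_{j-1})$. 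This is the characterization underlying \cite[Proposition~4.3]{CG17}, and the rest of the proof amounts to checking each of the five cases against it.

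Second, I would record the structural features of $A = \phi_{d,m}(D)$ that make these spans transparent. Columns $1, \dots, d$ are the standard basis vectors $e_1, \dots, e_d$. For $d < j \le n$ the column $A_j$ has entries in $\{0, \pm 1\}$, is supported exactly on rows $1, \dots, \omega_A(j)$, and is nonzero in row $\omega_A(j)$; moreover $\omega_A$ is weakly decreasing on $\{d+1, \dots, n\}$ and, because the ones in the columns of $D$ are nested, two such columns are equal precisely when their $\omega_A$-values coincide. The crucial consequence of nestedness is that if $\omega_A(j') \le \omega_A(j)$ then $A_j$ and $A_{j'}$ agree in rows $1, \dots, \omega_A(j')$, so $A_j - A_{j'}$ is supported exactly on rows $\omega_A(j')+1, \dots, \omega_A(j)$. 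In particular, after projecting away rows $i+1, \dots, d$, all columns of height $\ge i$ collapse to one and the same vector.

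With these tools the five cases fall out. If $d < i < n$ and $i+1 \notin I_A$ then $A_{i+1} = A_i$, so $A_i \in \mathrm{span}(A_{i+1})$ and $j = i+1$. If $d < i$ with $i = n$ or $i+1 \in I_A$, then $A_i$ is the right end of its block, every later column has strictly smaller height and hence vanishes in row $\omega_A(i)$, so the span cannot reach $A_i$ until the scan produces $e_{\omega_A(i)}$, giving $j = \omega_A(i)$. For $1 < i \le d$ we have $A_i = e_i$, and the scan first exhausts $e_{i+1}, \dots, e_d$ and the columns of index greater than $d$ before reaching $e_1, \dots, e_{i-1}$; projecting to rows $1, \dots, i$, the columns of height $\ge i$ give a single vector $w$ whose rows $1, \dots, i-1$ and row $i$ are all nonzero. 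If there is a (necessarily unique) $j \in I_A$ with $\omega_A(j) = i-1$, then by nestedness $w$ and $A_j$ agree in rows $1, \dots, i-1$, so $w - A_j$ is a nonzero multiple of $e_i$ and $e_i$ enters exactly when $A_j$ is scanned, giving $\pi^{-1}(i) = j$; if no such $j$ exists, then no column of index greater than $d$ can cancel row $i-1$ of $w$, and $e_i$ is produced only once $e_{i-1}$ has been adjoined, giving $\pi^{-1}(i) = i-1$. The case $i = 1$, where $e_1$ first appears upon adjoining $A_{d+1}$ (whose top entry is nonzero because the first column of $D$ consists of ones), yields $\pi^{-1}(1) = d+1$.

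The step I expect to be the main obstacle is the case $1 < i \le d$, and specifically justifying that $e_i$ genuinely does not enter the span any earlier than claimed in each of its two subcases. This is where the nestedness lemma and the ``collapse to $w$'' observation do the real work: one must argue both that the stated vector completes $e_i$ at the claimed position and that every earlier initial segment of the scan fails, for which keeping careful track of which rows are already cancellable is essential. Everything else — the two cases with $d < i$ and the case $i = 1$ — is a direct support-count argument.
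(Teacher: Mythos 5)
Your proposal is correct and follows the same route as the proof the paper defers to (\cite[Proposition~4.3]{CG17}, applied \emph{mutatis mutandis}): since $\pi$ has no fixed points, $\pi^{-1}(i)$ is the unique new element of $I_{i+1}$, and your greedy ``blocking'' argument correctly identifies it as the first $j$ in the order $\le_{i+1}$ with $A_i \in \text{span}(A_{i+1}, \dots, A_j)$, after which the nested-support structure of $\phi_{d,m}(D)$ --- all columns beyond the $d$-th carry the same sign pattern $(-1)^{d-r}$ in row $r$, so columns of weight $\ge i$ collapse to a single vector $w$ modulo $e_{i+1}, \dots, e_d$ --- yields exactly the five stated cases. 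Your handling of the delicate subcase $1 < i \le d$ is also sound: in any linear combination of $w$ and its truncations of height $\le i-2$ (and of $e_1, \dots, e_k$ with $k \le i-2$ after the wrap), rows $i-1$ and $i$ carry a common coefficient, so $e_i$ cannot appear before the unique column of weight $i-1$ (giving $\pi^{-1}(i) = j$) or, failing that, before $e_{i-1}$ itself is adjoined (giving $\pi^{-1}(i) = i-1$).
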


The next proposition generalizes \cite[Theorem~4.4]{CG17}, which states that the disjoint cycle decomposition of $\pi$ consists of only one full cycle.

\begin{prop} \label{prop:decorated permutation of rational Dyck positroids}
	The decorated permutation $\pi$ of a rank $d$ rational Dyck positroid on $[d+m]$ is a $(d+m)$-cycle. Moreover, the set of weak excedances of $\pi$ is $[d]$.
\end{prop}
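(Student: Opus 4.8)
The plan is to use the explicit description of $\pi^{-1}$ from Lemma~\ref{lem:explicity function for decorated permutations} and trace the orbit of a single element, showing that the cycle it generates has length $d+m$, hence exhausts all of $[d+m]$. Since $\pi$ has no fixed points (as noted just before the lemma), proving it is a single $(d+m)$-cycle amounts to verifying transitivity of the permutation. I would orient the computation around $\pi^{-1}$ rather than $\pi$, since that is what the lemma gives us directly.

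First I would examine the structure of $\pi^{-1}$ on the two index ranges. On the ground-set block $\{d+1,\dots,d+m\}$, the first two cases of Lemma~\ref{lem:explicity function for decorated permutations} show that $\pi^{-1}$ either increments $i$ to $i+1$ (when $i+1 \notin I_A$) or drops down into the range $[d]$ via $\omega_A(i)$ (when $i+1 \in I_A$ or $i=d+m$). On the block $\{2,\dots,d\}$, the last two cases show that $\pi^{-1}$ either sends $i$ back up to $d+1$ (when no principal index has weight $i-1$) or jumps to the principal index $j \in I_A$ with $\omega_A(j)=i-1$. The key structural fact I would extract is that $\pi^{-1}$ shuttles between the two blocks in a controlled way: large indices climb through consecutive equal-column blocks until hitting a principal index, then descend to $[d]$; small indices either climb back to $d+1$ or are thrown to a specific principal index one block up in weight.

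The main step is then to show the orbit is a single cycle of full length. I would start the orbit at $d+1$ and argue that following $\pi^{-1}$ repeatedly visits every index in $\{d+1,\dots,d+m\}$ (walking through the column blocks via the ``$i+1$'' rule and the principal-index structure) and, interleaved, visits every required index in $[d]$ exactly once. The cleanest way to make this rigorous is to count: since $\pi$ is fixed-point-free and $|[d+m]| = d+m$, it suffices to show that the orbit of $d+1$ under $\pi^{-1}$ has cardinality $d+m$, or equivalently that no proper nonempty subset of $[d+m]$ is $\pi^{-1}$-invariant. I expect the cleanest bookkeeping is to track how the principal indices $p_1 < \dots < p_t$ and the ``excess'' weights $E_A = \{q_1,\dots,q_u\}$ partition the return trips into $[d]$, using the fact (implicit in the rational Dyck condition) that $\omega_A$ is weakly increasing on consecutive columns and that the weights realized on the ground-set block together with $E_A$ account for all of $[d]$.

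Finally, for the weak-excedance claim, I would use the definition: $j$ is a weak excedance iff $j < \pi(j)$ or $\pi(j)=\bar j$. Since $\pi$ has no fixed points, the second alternative never occurs, so I must show $j < \pi(j)$ precisely when $j \in [d]$. Equivalently, in terms of $\pi^{-1}$, the position $i$ is a weak excedance iff $\pi^{-1}(i) < i$. Reading off Lemma~\ref{lem:explicity function for decorated permutations}: for $i \in [d]$ the value $\pi^{-1}(i)$ is either $d+1$ or a principal index in $\{d+1,\dots,d+m\}$, both strictly greater than $i \le d$, so these are \emph{not} weak excedances of $\pi^{-1}$ --- but this corresponds exactly to $[d]$ being weak excedances of $\pi$ after passing through the inverse. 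The careful point, which I flag as the place most likely to need attention, is correctly converting between ``weak excedance of $\pi$'' and conditions on $\pi^{-1}$: a position $j$ satisfies $j < \pi(j)$ iff, writing $i = \pi(j)$, we have $\pi^{-1}(i) < i$, so the weak excedances of $\pi$ are exactly the set $\{\,\pi^{-1}(i) : \pi^{-1}(i) < i\,\}$, and I would verify directly from the five cases that this set equals $[d]$. The main obstacle throughout is the careful interval-and-index accounting in case~(2)--(3) transitions; the cycle structure itself follows once the shuttling pattern between the blocks is pinned down.
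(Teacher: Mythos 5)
Your treatment of the first claim (that $\pi$ is a single $(d+m)$-cycle) has a genuine gap, and it starts with a misreading of Lemma~\ref{lem:explicity function for decorated permutations}. For $1 < i \le d$ with $\omega_A(j) \neq i-1$ for all $j \in I_A$, the lemma gives $\pi^{-1}(i) = i-1$, a \emph{decrement inside} $[d]$, not a return to $d+1$; only $i=1$ is sent to $d+1$. (Your reading cannot be right on its face: if every such $i \in \{2,\dots,d\}$ went to $d+1$, $\pi^{-1}$ would fail injectivity.) This wrong shuttle picture feeds the main step, which in any case is never executed: showing that the orbit of $d+1$ exhausts $[d+m]$, i.e.\ that no proper nonempty subset is $\pi^{-1}$-invariant, is precisely the nontrivial content of the proposition, and ``I expect the cleanest bookkeeping is to track\dots'' defers rather than discharges it. (Also, $\omega_A$ is weakly \emph{decreasing}, not increasing, in the column index on $\{d+1,\dots,d+m\}$, and strictly decreasing on $I_A$.) The paper avoids all orbit bookkeeping with a single observation extracted from the same lemma: $\omega_A(i) \le \omega_A(\pi(i))$ whenever $\pi(i) \neq 1$. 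Consequently any cycle of $\pi$ avoiding $1$ (equivalently avoiding $d+1$, since $\pi(d+1)=1$) would have $\omega_A$ constant along it, which the case analysis rules out for a nontrivial cycle; so every nontrivial cycle contains $1$, there is exactly one, and fixed-point-freeness makes it a full $(d+m)$-cycle. You should either adopt this monotonicity argument or genuinely carry out the invariant-subset count with the corrected case~(4).

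Your second half, by contrast, is correct and complete once the lemma is read correctly, and it is a legitimately different (and shorter) route than the paper's. From the five cases: cases~(1), (3), (5) produce values $i+1 \ge d+2$, $d+1$, and $j \in I_A \subseteq \{d+1,\dots,d+m\}$, all exceeding both $d$ and $i$; cases~(2), (4) produce values $\omega_A(i) \le d < i$ and $i-1 < i$ lying in $[d]$. Hence by bijectivity every $j \in [d]$ arises as $\pi^{-1}(i)$ in case~(2) or~(4), where $\pi^{-1}(i) < i$, and no $j > d$ does, so $\{j \mid j < \pi(j)\} = \{\pi^{-1}(i) \mid \pi^{-1}(i) < i\} = [d]$ exactly. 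The paper instead argues via the relative order of $2,\dots,d$ (decreasing) and $d+2,\dots,d+m$ (increasing) within the cycle notation of $\pi^{-1}$; your case-check is more direct and has the added merit of not depending on the single-cycle claim at all.
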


\begin{proof}
	Let $P$ be a rational Dyck positroid with decorated permutation $\pi$, and let $A \in \phi_{d,m}(\mathcal{D}_{d,m})$ be a matrix representing $P$. It follows from Lemma~\ref{lem:explicity function for decorated permutations} that $\omega_A(i) \le \omega_A(j)$ provided that $\pi(i) = j$ and $j \neq 1$. Let $\sigma$ be a nontrivial cycle of length $\ell$ in the disjoint cycle decomposition of $\pi$. Notice that $\sigma$ cannot fix $1$; otherwise, for $i \in [d+m]$ such that $\sigma(i) \neq i$ we would obtain $\sigma^{j}(i) \neq d+1$ for $j = 1, \dots, \ell$ and so $\omega_A(i) \le \omega_A(\sigma(i)) \le \dots \le \omega_A(\sigma^\ell(i)) = \omega_A(i)$, contradicting the fact that $\sigma$ is nontrivial. Therefore $\pi$ is a $(d+m)$-cycle.
	
	Now let us proceed to show that the set of weak excedances of $\pi$ is precisely $[d]$. To do this, write $\pi^{-1} = (d+1 \ \pi^{-1}(d+1) \ \dots \ (\pi^{-1})^{d+m-2}(d+1) \ \, 1)$. As the map $\omega_A$ fixes each element of $[d]$, the fact that $\omega_A(i) \le \omega_A(j)$ whenever $\pi^{-1}(j) = i$ and $j \neq 1$ ensures that the sequence $(\pi^{-1}(d+1), \dots, (\pi^{-1})^{d+m-2}(d+1))$ contains the elements $2, \dots, d$ in decreasing order. On the other hand, the fact that $\omega_A$ is strictly decreasing on $I_A$ guarantees that the sequence $(\pi^{-1}(d+1), \dots, (\pi^{-1})^{d+m-2}(d+1))$ contains the elements $d+2, \dots, d+m$ in increasing order. Since $\pi$ does not fix any element, its weak excedances are those $j \in [d+m]$ such that $j < \pi(j)$. Because the elements of $[d]$ are the $d$ smallest elements in $[d+m]$ and show increasingly in $\pi = (1 \ \pi(1) \ \dots \ \pi^{d+m-1}(1))$, each element of $[d]$ is a weak excedance of $\pi$. Besides, no element greater than $d$ can be a weak excedance of $\pi$ as $d+1, \dots, d+m$ show decreasingly in $\pi = (1 \ \pi(1) \ \dots \ \pi^{d+m-1}(1))$. Hence $[d]$ is the set of weak excedances of $\pi$.
\end{proof}

Before proving the main theorem of this section, let us collect the next technical result.

\begin{lemma} \label{lem:geometric inequality}
	If $A \in \phi_{d,m}(\mathcal{D}_{d,m})$, then for each $j \in \{d+1, \dots, d+m\}$ the following inequality holds:
	\[
		\omega_A(j) \ge \frac dm (d + m - j + 1).
	\]
\end{lemma}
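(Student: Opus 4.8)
The plan is to peel the statement down to a bound on a single column of the underlying rational Dyck matrix and then read that bound off directly from the defining sub-diagonal condition of a rational Dyck path. Write $A = \phi_{d,m}(D)$ with $D \in \mathcal{D}_{d,m}$, and for $k \in [m]$ let $n_k$ denote the number of $1$'s in the $k$-th column of $D$ and $z_k = d - n_k$ the number of $0$'s there. Since $\phi_{d,m}$ only reverses the order of the rows and rescales them by signs, the number of nonzero entries of the column $A_{d+k}$ equals $n_k$; as observed just before the statement, for $d < j \le d+m$ this number is exactly $\omega_A(j)$. Setting $j = d+k$ (so $k$ runs over $[m]$ as $j$ runs over $\{d+1,\dots,d+m\}$, and $d+m-j+1 = m-k+1$), the asserted bound $\omega_A(j) \ge \frac dm(d+m-j+1)$ becomes
\[
	n_k \ge \frac dm(m-k+1), \qquad \text{equivalently} \qquad z_k \le \frac dm (k-1).
\]
Thus everything reduces to bounding the number of $0$'s in each column of $D$.

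To establish $z_k \le \frac dm(k-1)$, I would return to the rational Dyck path $\mathsf d$ of type $(m,d)$ inducing $D$ and record it by its height function: let $h_k$ be the number of up-steps of $\mathsf d$ taken before its $(k+1)$-th right-step, so that $h_0 = 0 \le h_1 \le \dots \le h_m = d$, and, because $\mathsf d$ never rises above the diagonal $y = (d/m)x$, the vertex $(k,h_k)$ lies weakly below it, giving $h_k \le \frac dm k$ for every $k \in \{0,1,\dots,m\}$. The matrix $D$ is cut out by the vertical reflection of $\mathsf d$: its $1$'s are the cells weakly below the reflected path and its $0$'s the cells strictly above it. Inspecting the reflected path over the $k$-th column shows it enters that column (at $x = k-1$) at height $d - h_{k-1}$ from the bottom, so exactly the top $h_{k-1}$ cells of the column lie above it; that is, $z_k = h_{k-1}$. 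Evaluating the sub-diagonal inequality at $k-1$ then yields $z_k = h_{k-1} \le \frac dm(k-1)$, which is precisely what is needed, and unwinding the reduction above completes the proof.

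The only real content sits in the middle step, the bookkeeping identity $z_k = h_{k-1}$: one must fix once and for all the meaning of ``vertically-reflected'' and the convention assigning the boundary cells to the $0$- or $1$-region, and then verify that the reflected path leaves exactly $h_{k-1}$ cells above it in the $k$-th column. I expect no genuine difficulty here beyond keeping the reflection and the off-by-one in the indexing straight. As a sanity check I would also confirm the endpoints ($h_0 = 0$, forcing column $1$ to be all $1$'s, and $z_k$ weakly increasing in $k$), and note that one can sidestep the path entirely using the description from the introduction: since the $0$'s of $D$ form a Young diagram strictly above the geometric main diagonal, the $z_k$ top-justified $0$'s of column $k$ all occupy cells $(i,k)$ with $mi \le d(k-1)$, of which there are at most $\lfloor d(k-1)/m\rfloor \le \frac dm(k-1)$. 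Either route makes the estimate an immediate consequence of the single defining inequality of a rational Dyck path.
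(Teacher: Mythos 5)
Your proof is correct and is essentially the paper's own argument recast in coordinates: the paper likewise reduces $\omega_A(j)$ to the number of ones in column $j-d$ of the Dyck matrix and invokes the sub-diagonal condition at the left endpoint of the $(j-d)$-th horizontal step, merely computing the resulting bound $\frac{d}{m}(d+m-j+1)$ via similar triangles where you use the height-function inequality $h_{k-1} \le \frac{d}{m}(k-1)$. Your bookkeeping identity $z_k = h_{k-1}$ is the same reflection/off-by-one accounting the paper handles pictorially, so nothing is missing.
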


\begin{proof}
	Let $A = (I_d \mid A')$, and place $A$ into $\rr^2$ in such a way that the rational Dyck path $\mathsf{d}$ of type $(m,d)$ separating the zero and nonzero entries of $A'$ goes from $T = (0,0)$ to $R = (m,d)$. Let $S = (0,d)$ and let $S'$ and $T'$ be the intersection of the vertical line passing through the right endpoint of the $(j-d)$-th horizontal step of $\mathsf{d}$ with the segments $\overline{SR}$ and $\overline{TR}$, respectively. This description is illustrated in the picture below.
	\begin{figure}[h]
		\centering
		\includegraphics[width = 9cm]{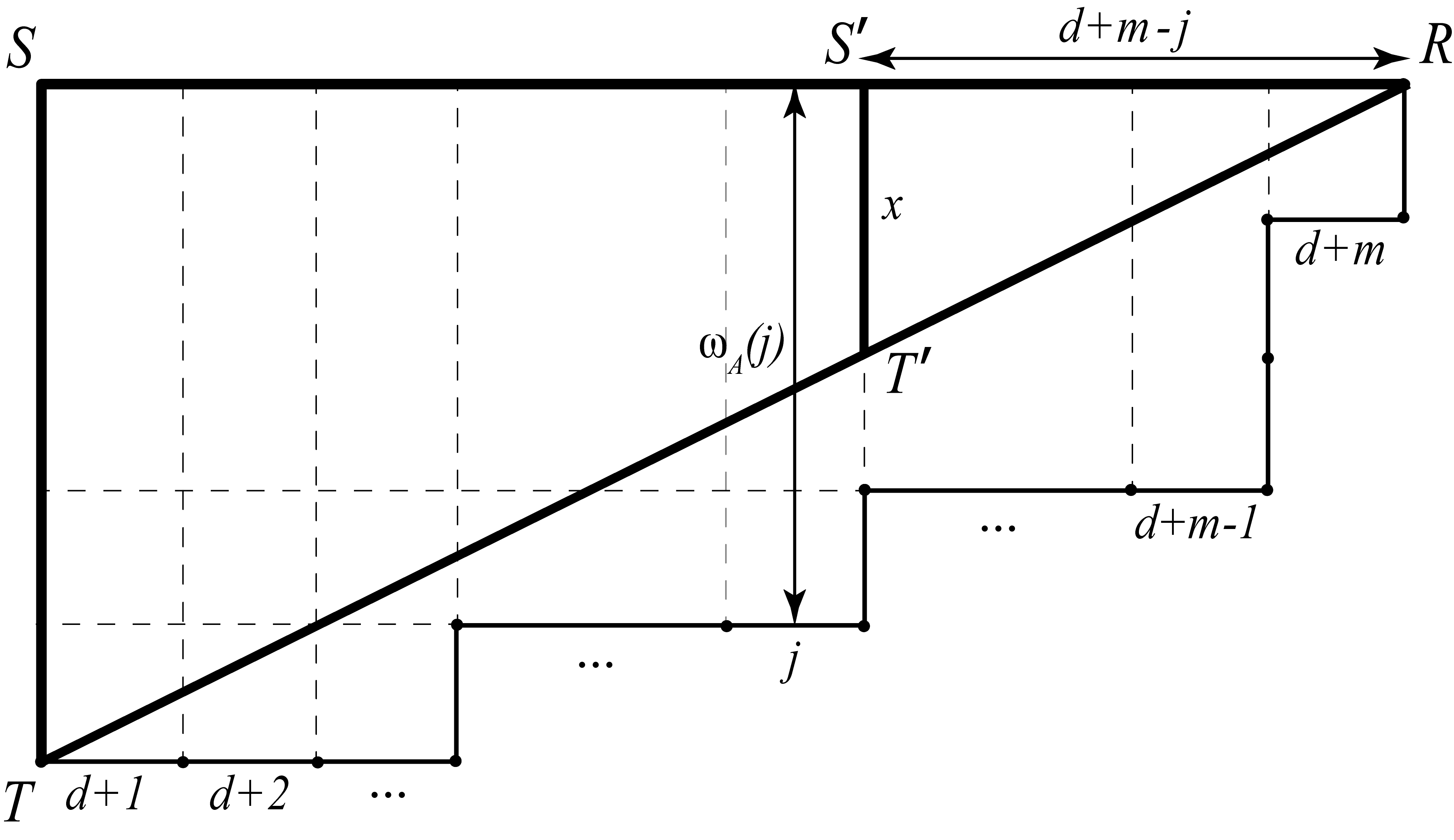}
		\label{fig:geometric inequality}
	\end{figure} 
	
	\noindent If $x$ is the length of the segment $S'T'$, then the similarity of the triangles $RST$ and $RS'T'$ implies that $x/d = (d+m-j)/m$. As the rational Dyck path $\mathsf{d}$ never goes above the diagonal line $y = (d/m)x$, we obtain that
	\[
		\omega_A(j) \ge x + \frac dm = \frac dm (d + m - j + 1),
	\]
	and the lemma follows.
\end{proof}

\begin{theorem}
	There is a bijection between the set of rational Dyck paths of type $(m,d)$ and the set of rank $d$ rational Dyck positroids on the ground set $[d+m]$.
\end{theorem}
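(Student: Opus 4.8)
The plan is to realize the asserted bijection as the composite of two maps: the assignment $\mathsf{d} \mapsto D$ sending a rational Dyck path of type $(m,d)$ to its rational Dyck matrix $D \in \mathcal{D}_{d,m}$, followed by the assignment $D \mapsto P$ sending $D$ to the rational Dyck positroid represented by $\phi_{d,m}(D)$. The first assignment is a bijection by the very definition of a rational Dyck matrix (a path and the matrix whose zero/one entries it separates determine one another), and the second is surjective onto $\mathcal{P}_{d,m}$ by the definition of a rational Dyck positroid. Hence everything reduces to proving that $D \mapsto P$ is injective, i.e., that distinct rational Dyck matrices yield distinct positroids.

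To prove injectivity I would recover $D$ from $P$. Since the correspondence between positroids and decorated permutations developed in Section~\ref{sec:decorated permutation} is a bijection, the positroid $P$ determines and is determined by its decorated permutation $\pi$; thus it suffices to reconstruct $D$ from $\pi$. First I would observe that $D$ is completely determined by the integers $\omega_A(d+1), \dots, \omega_A(d+m)$, where $A = \phi_{d,m}(D)$. Indeed, by the shape of a rational Dyck matrix the ones in each column of $D$ are contiguous and bottom-anchored, so the value $\omega_A(d+j)$, which equals the number of nonzero entries of the $(d+j)$-th column of $A$ and hence the number of ones in the $j$-th column of $D$, pins down that column. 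Consequently the column-height vector $(\omega_A(d+1), \dots, \omega_A(d+m))$ recovers $D$, and then the path $\mathsf{d}$.

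The key step is therefore to read off $\omega_A$ on $\{d+1, \dots, d+m\}$ from $\pi$ alone, and here I would invoke Lemma~\ref{lem:explicity function for decorated permutations}. For $d < i < d+m$ that lemma gives $\pi^{-1}(i) = i+1$ when $i+1 \notin I_A$ and $\pi^{-1}(i) = \omega_A(i) \le d$ when $i+1 \in I_A$, while $\pi^{-1}(d+m) = \omega_A(d+m) \le d$. Because the two possible outputs lie in the disjoint ranges $\{d+2, \dots, d+m\}$ and $[d]$, from $\pi^{-1}$ one can detect exactly which indices $p \in \{d+2, \dots, d+m\}$ lie in $I_A$ --- namely those with $\pi^{-1}(p-1) \le d$ --- thereby recovering the decomposition of $\{d+1, \dots, d+m\}$ into the maximal blocks of consecutive indices on which $\omega_A$ is constant. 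At the right end $r$ of each such block one has $r+1 \in I_A$ or $r = d+m$, so $\omega_A(r) = \pi^{-1}(r)$ is visible, and since $\omega_A$ is constant on the block this determines $\omega_A$ throughout. Propagating these values across all blocks recovers $(\omega_A(d+1), \dots, \omega_A(d+m))$, hence $D$, establishing injectivity.

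The main obstacle I anticipate is the bookkeeping in this last step: one must argue cleanly that $\omega_A$ is genuinely constant on each block (which follows because equal columns of $A$ share the same weight, by the definition of $I_A$) and that every block has its right end detected by a value $\le d$, including the boundary cases where $d+1 \notin I_A$ or where $\{d+1, \dots, d+m\}$ happens to form a single block. Once these points are in place, combining the injectivity just proved with the surjectivity of $D \mapsto P$ and the bijectivity of $\mathsf{d} \mapsto D$ yields the asserted bijection between rational Dyck paths of type $(m,d)$ and rank $d$ rational Dyck positroids on the ground set $[d+m]$.
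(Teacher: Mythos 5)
Your proposal is correct, and it takes a genuinely different route from the paper's proof, although both arguments funnel through the decorated permutation and both lean on the explicit formula of Lemma~\ref{lem:explicity function for decorated permutations}. The paper constructs an explicit candidate inverse $\beta$ on all of $\mathcal{P}_{d,m}$: it writes $\pi^{-1}$ as a single cycle $(i_1 \ \dots \ i_{d+m})$ starting at $d+1$ (which requires the full-cycle and monotonicity analysis of Proposition~\ref{prop:decorated permutation of rational Dyck positroids}), converts labels in $[d]$ to vertical steps and labels in $\{d+1,\dots,d+m\}$ to horizontal steps, and then must verify that the resulting lattice path never crosses the diagonal --- the hardest part, handled by a contradiction argument via the geometric inequality of Lemma~\ref{lem:geometric inequality} --- before checking $\beta \circ \alpha = \mathrm{id}$ by induction along the path. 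You avoid all of that by proving injectivity abstractly rather than exhibiting an inverse defined on the codomain: since surjectivity of $D \mapsto P$ is definitional, it suffices to recover $D$ from $\pi$, and your block-detection argument does this cleanly --- the two cases of Lemma~\ref{lem:explicity function for decorated permutations} on $\{d+1,\dots,d+m\}$ have outputs in the disjoint ranges $\{d+2,\dots,d+m\}$ and $[d]$, so $\pi^{-1}$ simultaneously reveals the maximal blocks of identical columns and the weight $\omega_A(r)=\pi^{-1}(r)$ at each block's right end (with $r=d+m$ covering the last block), and the bottom-justified column structure of a rational Dyck matrix turns the height vector $(\omega_A(d+1),\dots,\omega_A(d+m))$ back into $D$. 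Your route is logically lighter: it needs neither Lemma~\ref{lem:geometric inequality} nor the cycle structure of $\pi$ (only its pointwise values, which are unambiguous since these permutations have no fixed points, as the paper notes before Lemma~\ref{lem:explicity function for decorated permutations}). What the paper's heavier construction buys in exchange is an explicit, intrinsic description of the inverse map from positroids to paths, which is essentially the labeling rule later recorded as Proposition~\ref{prop:decoding decorated permutation}; your argument establishes the bijection but does not by itself produce that reading rule. The boundary cases you flag are genuine but benign: $\omega_A$ is constant on blocks because the columns there are literally equal, and whether or not $d+1 \in I_A$ does not affect the partition of $\{d+1,\dots,d+m\}$ into blocks, so the reconstruction goes through uniformly.
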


\begin{proof}
	Identify the set of rational Dyck paths of type $(m,d)$ with $\mathcal{D}_{d,m}$. Let $\alpha \colon \mathcal{D}_{d,m} \to \mathcal{P}_{d,m}$ be the map assigning to each rational Dyck path of type $(m,d)$ its corresponding rational Dyck positroid via the map $\phi_{d,m}$ in Lemma~\ref{lem:correspondence between totally nonnegative square and rectangular matrix}. We will find a map $\beta \colon \mathcal{P}_{d,m} \to \mathcal{D}_{d,m}$ which is a left inverse of $\alpha$. Let $P$ be a positroid in $\mathcal{P}_{d,m}$ with corresponding decorated permutation $\pi$ such that $\pi^{-1} = (i_1 \ \dots \ i_{d+m})$, where $i_1 = d+1$. Define $\beta(P)$ to be the lattice path $(s_1, \dots, s_{d+m})$ where $s_j = (1,0)$ if $i_j \in \{d+1, \dots, d+m\}$ and $s_j = (0,1)$ if $i_j \in [d]$. Showing that $\beta$ is well defined, i.e., that $\beta(P)$ is a rational Dyck path, will be the fundamental part of this proof.
	
	 Let $P$ be as in the previous paragraph, and let $A \in \phi_{d,m}(\mathcal{D}_{d,m})$ be a matrix representing $P$. For $j = 1,\dots, d+m$, set $S_j = s_1 + \dots + s_j$, and take $\ell_j$ to be the slope of the line determined by the points $(0,0)$ and $S_j$. As $\beta(P)$ consists of $d$ vertical unit steps and $m$ horizontal unit steps, it is a lattice path from $(0,0)$ to $(m,d)$. Suppose, by way of contradiction, that $\beta(P)$ is not a rational Dyck path. Then there exists a minimum $n \in [d+m-1]$ such that $\ell_n > d/m$. The minimality of $n$ implies that $s_n = (0,1)$. Take
	\[
		k = \max\{j \in [d+m] \mid j \ge n \ \text{and} \ s_{j'} = (0,1) \ \text{for all} \ n \le j' \le j\}.
	\]
	As $s_k = (0,1)$ and the elements labeling vertical steps of $\beta(P)$, namely $1,\dots,d$, show decreasingly in $(i_1, \dots, i_{d+m})$, the number of vertical steps in $\{s_1, \dots, s_k\}$ is $d - i_k + 1$. In addition, as $s_{k+1} = (1,0)$ and the elements labeling horizontal steps, namely $d+1, \dots, d+m$, show increasingly in $(i_1, \dots, i_{d+m})$, there are $i_{k+1} - d - 1 = \pi^{-1}(i_k) - d - 1$ horizontal steps in $\{s_1, \dots, s_k\}$. The fact that $\ell_n \le \ell_k$ now implies
	\begin{equation} \label{eq:slope equation}
		 \frac dm < \ell_k = \frac{d - i_k + 1}{\pi^{-1}(i_k) - d - 1}.
	\end{equation}
	After applying some algebraic manipulations to the inequality \eqref{eq:slope equation}, one finds that
	\begin{equation} \label{eq:eq1 after algebraic manipulations}
		\frac dm \big(d+m - \pi^{-1}(i_k) + 1 \big) > i_k - 1.
	\end{equation}
	Since $s_{k+1} = (1,0)$ and $i_{k+1} = \pi^{-1}(i_k)$, we have $d+1 \le \pi^{-1}(i_k) \le d+m$; thus, an application of Lemma~\ref{lem:geometric inequality} yields
	\begin{equation} \label{eq:triangular inequality}
		\omega_A(\pi^{-1}(i_k)) \ge \frac dm \big(d + m - \pi^{-1}(i_k) + 1 \big).
	\end{equation}
	On the other hand, the fact that $i_k \in [d]$, along with Lemma~\ref{lem:explicity function for decorated permutations}, implies that
	\begin{equation} \label{eq:weight of i_k and its inverse}
		i_k = \omega_A(i_k) = \omega_A(\pi^{-1}(i_k)) + 1.
	\end{equation}
	 Now we combine \eqref{eq:eq1 after algebraic manipulations}, \eqref{eq:triangular inequality}, and \eqref{eq:weight of i_k and its inverse} to obtain
	\[
		i_k - 1 = \omega_A(\pi^{-1}(i_k)) \ge \frac dm \big(d+m - \pi^{-1}(i_k) + 1\big) > i_k - 1,
	\]
	which is a contradiction. Hence $\beta(P)$ is indeed a rational Dyck path and, therefore, $\beta$ is a well-defined function.

	To verify that $\beta$ is a left inverse of $\alpha$, take $\mathsf{d} = (d_1, \dots, d_{d+m}) \in \{(1,0), (0,1)\}^{d+m}$ to be a rational Dyck path in $\mathcal{D}_{d,m}$ such that $\alpha(\mathsf{d}) = P$, and set $\mathsf{d}' = \beta(P) = (d'_1, \dots, d'_{d+m})$. As before, let $\pi^{-1} = (i_1 \ \dots \ i_{d+m})$, where $i_1 = d+1$. Let us verify that $\mathsf{d}' = \mathsf{d}$. Suppose inductively that $d'_j = d_j$ for each $j \in [d+m-1]$ (notice that $d'_1 = d_1 = (1,0)$). As $j < d+m$, it follows that $i_j \neq 1$. Assume first that $d_j = (1,0)$, which means that $i_j \in \{d+1, \dots, d+m\}$. If $\pi^{-1}(i_j) = i_j + 1$, then $d_{j+1}$ is also a horizontal step by Lemma \ref{lem:explicity function for decorated permutations}. Also, the fact that $i_{j+1} = i_j + 1 \in \{d+1, \dots, d+m\}$ guarantees that $d'_{j+1}$ is a horizontal step too. On the other hand, if $\pi^{-1}(i_j) \neq i_j + 1$ (which means that $\pi^{-1}(i_j) = \omega_A(i_j)$), then $d_{j+1}$ is vertical by Lemma~\ref{lem:explicity function for decorated permutations}. In addition, $\pi^{-1}(i_j) = \omega_A(i_j) \le d$ implies that $d'_{j+1}$ is also a vertical step. Hence $d'_{j+1} = d_{j+1}$. In a similar fashion the reader can verify that $d'_{j+1} = d_{j+1}$ when $d_j = (0,1)$. The fact that $\alpha$ has a left inverse function, along with $|\mathcal{D}_{d,m}| \ge |\mathcal{P}_{d,m}|$, yields that $\alpha$ is a bijection.
\end{proof}

\begin{cor}
	The number of rank $d$ rational Dyck positroids on the ground set $[d+m]$ equals the rational Catalan number $\emph{Cat}(m,d)$. In particular, there are $\frac 1{d+m} \binom{d+m}{d}$ rank $d$ rational Dyck positroids on the ground set $[d+m]$ when $\gcd(d,m) = 1$.
\end{cor}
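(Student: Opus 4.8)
The plan is to read the count directly off the bijection established in the theorem immediately preceding this corollary, so essentially no new work is required. First I would recall that, by definition, the rational Catalan number $\text{Cat}(m,d)$ is the number of rational Dyck paths of type $(m,d)$; equivalently, after identifying each such path with the $d \times m$ rational Dyck matrix it induces, one has $\text{Cat}(m,d) = |\mathcal{D}_{d,m}|$.

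Next I would invoke the theorem, which furnishes a bijection $\alpha \colon \mathcal{D}_{d,m} \to \mathcal{P}_{d,m}$. Since both sets are finite and $\alpha$ is a bijection, they have equal cardinality, whence $|\mathcal{P}_{d,m}| = |\mathcal{D}_{d,m}| = \text{Cat}(m,d)$. Because $\mathcal{P}_{d,m}$ is by definition the set of rank $d$ rational Dyck positroids on the ground set $[d+m]$, this proves the first assertion.

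Finally, for the ``in particular'' clause I would substitute into this equality the closed form recorded in the introduction, namely $\text{Cat}(m,d) = \frac{1}{d+m}\binom{d+m}{d}$ when $\gcd(d,m) = 1$, as stated in \eqref{eq:formula of rational catalan number coprime}. There is no genuine obstacle here: the entire content of the corollary is contained in the preceding theorem together with the definition of $\text{Cat}(m,d)$ and the coprime formula. The real difficulty was already dispatched in showing that $\alpha$ is a bijection (by exhibiting the left inverse $\beta$ and checking $|\mathcal{D}_{d,m}| \ge |\mathcal{P}_{d,m}|$); the corollary is merely the enumerative translation of that fact.
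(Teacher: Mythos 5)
Your proposal is correct and matches the paper's (implicit) argument exactly: the corollary is stated without proof precisely because it follows immediately from the bijection $\alpha \colon \mathcal{D}_{d,m} \to \mathcal{P}_{d,m}$ of the preceding theorem, the definition of $\mathrm{Cat}(m,d)$ as the number of rational Dyck paths of type $(m,d)$, and the coprime formula \eqref{eq:formula of rational catalan number coprime} from the introduction. You correctly locate all the real work in the theorem itself, so nothing further is needed.
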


The following proposition provides a very simple way to compute the decorated permutation of a given rational Dyck positroid directly from its rational Dyck path. We will omit the proof as it follows with no substantial changes the proof of \cite[Proposition~5.1]{CG17}.

\begin{prop} \label{prop:decoding decorated permutation}
	Let $\mathsf{d}$ be a rational Dyck path of type $(m,d)$. Labeling the $d$ vertical steps of $\mathsf{d}$ from top to bottom in increasing order with $1, \dots, d$ and the $m$ horizontal steps from left to right in increasing order with $d+1, \dots, d+m$, we obtain the decorated permutation of the rational Dyck positroid induced by $\mathsf{d}$ by reading the step labels of $\mathsf{d}$ in southwest direction.
\end{prop}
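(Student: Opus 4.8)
The plan is to avoid redoing any computation by reading the statement straight off the encoding of the path produced in the proof of the bijection theorem above. There, writing $\pi^{-1} = (i_1 \ \dots \ i_{d+m})$ as a single cycle with $i_1 = d+1$, the path $\mathsf{d} = \beta(P) = (s_1, \dots, s_{d+m})$ is defined step by step in the \emph{northeast} order from $(0,0)$ to $(m,d)$ by declaring $s_j$ horizontal when $i_j \in \{d+1, \dots, d+m\}$ and vertical when $i_j \in [d]$. In other words, the cyclic word $(i_1 \ \dots \ i_{d+m})$ already records the steps of $\mathsf{d}$, each step $s_j$ being tagged by the value $i_j$. The entire proposition therefore reduces to two bookkeeping checks: that the value $i_j$ equals the geometric label the statement attaches to $s_j$, and that the prescribed southwest reading recovers $\pi$ rather than $\pi^{-1}$.

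First I would reconcile the two labelings. From the proof of Proposition~\ref{prop:decorated permutation of rational Dyck positroids}, the horizontal values $d+1, \dots, d+m$ occur in increasing order along the cycle $(i_1, \dots, i_{d+m})$, while the vertical values $1, \dots, d$ occur in decreasing order. Since in the northeast order the horizontal steps are met from left to right, the $k$-th horizontal step carries $i_j = d+k$, matching the left-to-right labels $d+1, \dots, d+m$ of the statement. Since the vertical steps are met from bottom to top, the $k$-th vertical step from the bottom carries $i_j = d-k+1$, so that the bottommost carries $d$ and the topmost carries $1$, matching the top-to-bottom labels $1, \dots, d$. Consequently the label of $s_j$ demanded by the statement is exactly $i_j$.

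It then remains to account for the reading direction, which is the one place where care is needed. Reading the labels of $\mathsf{d}$ in the \emph{southwest} direction reverses the northeast order and so produces the word $i_{d+m}, i_{d+m-1}, \dots, i_1$; since reversing a cycle inverts the underlying permutation, the identity $\pi^{-1} = (i_1 \ \dots \ i_{d+m})$ yields $\pi = (i_{d+m} \ i_{d+m-1} \ \dots \ i_1)$, so the southwest reading is precisely the cycle notation of $\pi$. Because $\pi$ is a genuine fixed-point-free $(d+m)$-cycle by Proposition~\ref{prop:decorated permutation of rational Dyck positroids}, this single cyclic word determines the decorated permutation of $P$ with no decorations to record, completing the argument. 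The two boundary steps serve as a useful consistency check: the first step is forced horizontal with $i_1 = d+1$ and the last step is forced vertical with $i_{d+m} = 1$, since a vertical first step or a horizontal last step would immediately leave the region below the diagonal. The hard part is not any estimate but simply keeping the northeast/southwest orientations and the $\pi$-versus-$\pi^{-1}$ inversion straight throughout.
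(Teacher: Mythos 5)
Your argument is correct, but it is not the route the paper takes: the paper omits the proof altogether, asserting that it follows \emph{mutatis mutandis} from \cite[Proposition~5.1]{CG17}, which is a direct verification --- using the explicit formula for $\pi^{-1}$ (Lemma~\ref{lem:explicity function for decorated permutations}) --- that whenever the step labeled $i$ is followed in the southwest reading by the step labeled $j$, one has $\pi(i) = j$, checked case by case according to the vertical/horizontal types of the two consecutive steps. You instead derive the proposition as a corollary of material already established in Section~\ref{sec:decorated permutation}: the left inverse $\beta$ constructed in the bijection theorem tags the steps of $\mathsf{d} = \beta(\alpha(\mathsf{d}))$, in northeast order, with the cycle $(i_1 \ \dots \ i_{d+m})$ of $\pi^{-1}$; the monotonicity facts from the proof of Proposition~\ref{prop:decorated permutation of rational Dyck positroids} (the values $1,\dots,d$ appear decreasingly and $d+1,\dots,d+m$ increasingly along that cycle) identify $i_j$ with the geometric label the statement assigns to the $j$-th step; and reversing a cycle inverts the permutation, so the southwest reading is exactly $\pi$. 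Every ingredient you invoke is indeed available before Proposition~\ref{prop:decoding decorated permutation} --- $\pi^{-1}(1) = d+1$ and fixed-point-freeness come from Lemma~\ref{lem:explicity function for decorated permutations} and the discussion preceding it, $\beta \circ \alpha = \mathrm{id}$ from the bijection theorem --- so there is no circularity, and your handling of the one delicate point (southwest reading reverses the northeast order, turning $\pi^{-1}$ into $\pi$) is exactly right. What each approach buys: yours avoids redoing the four-case step-by-step analysis, since that work is already embedded in the proof that $\beta$ is a left inverse, at the mild stylistic cost of leaning on assertions (the monotonicity claims) that are stated only inside an earlier proof rather than as standalone lemmas; the CG17-style computation is self-contained and would prove the proposition even without the bijection theorem, which is presumably why the original reference proceeds that way.
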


\begin{example}
	Let $P$ be the rational Dyck positroid induced by the rational Dyck path $\mathsf{d}$ of type $(8,5)$ illustrated in Figure~\ref{fig:permutation from rational Dyck path}. The path $\mathsf{d}$ is labeled as indicated in Proposition~\ref{prop:decoding decorated permutation}. Therefore the decorated permutation of $P$ is $\pi = (1 \ 2 \ 1\!3 \ 1\!2 \ 3 \ 1\!1 \ 1\!0 \ 4 \ 9 \ 5 \ 8 \ 7 \ 6)$, which is obtained by reading the labels of $\mathsf{d}$ in southwest direction.
	\begin{figure}[h]
		\centering
		\includegraphics[width = 6.0cm]{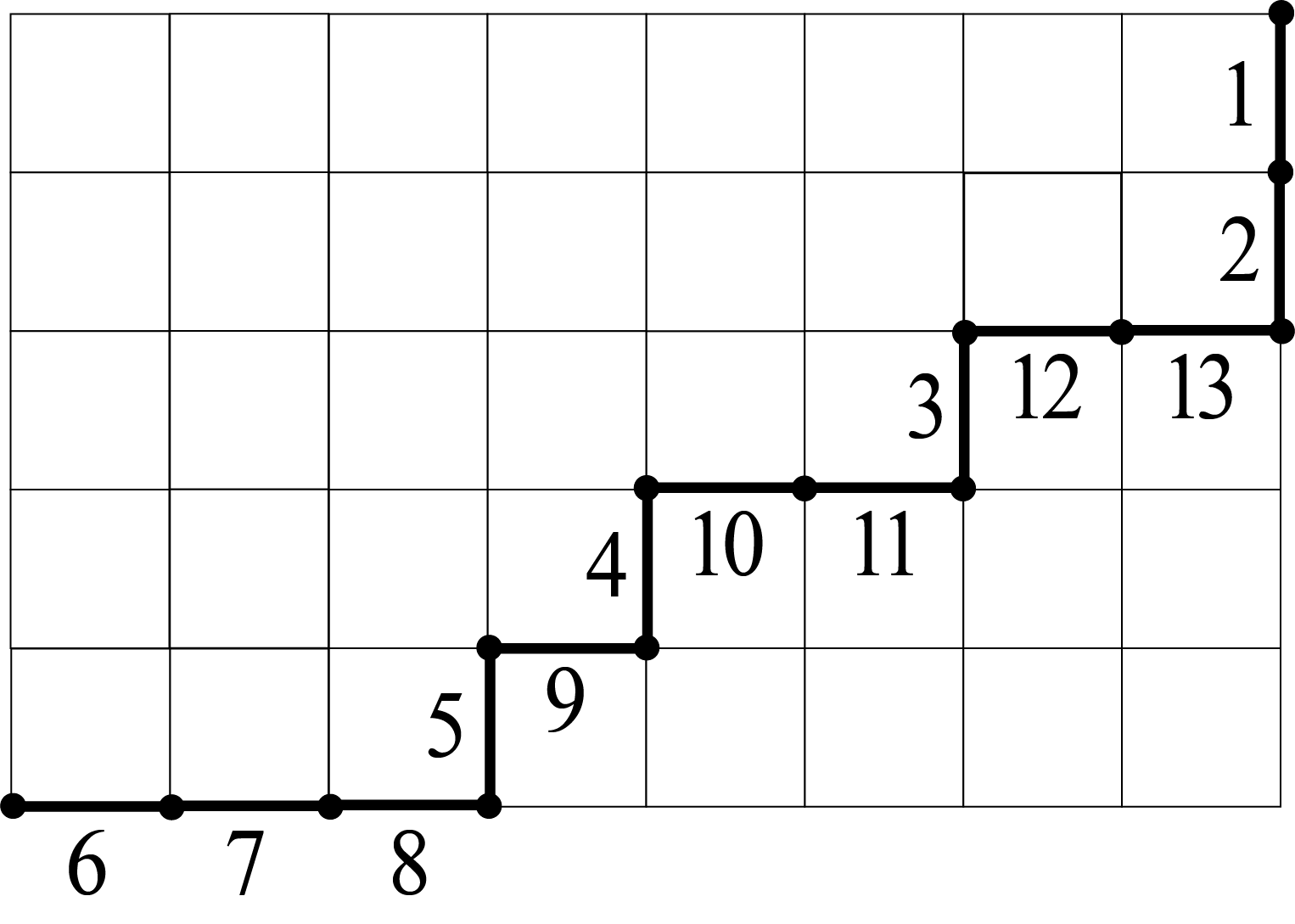}
		\caption{A rational Dyck path of type $(8,5)$ encoding the decorated permutation of the rational Dyck positroid it induces.}
		\label{fig:permutation from rational Dyck path}
	\end{figure}
\end{example}

\section{Le-diagrams} \label{sec:Le-diagram}

In this section we describe the \reflectbox{L}-diagrams corresponding to rational Dyck positroids. These combinatorial objects not only are in bijection with positroids but also encode the dimension of the Grassmann cells containing the positroids they parameterize.

\begin{definition}
	Let $d$ and $m$ be positive integers, and let $Y_\lambda$ be the Young diagram associated to a given partition $\lambda$ contained in a $d \times m$ rectangle. A \emph{\reflectbox{\emph{L}}-diagram} (or \emph{Le-diagram}) $L$ of shape $\lambda$ and type $(d,d+m)$ is obtained by filling the boxes of $Y_\lambda$ with zeros and pluses so that no zero entry has simultaneously a plus entry above it in the same column and a plus entry to its left in the same row.
\end{definition}

With notation as in the above definition, the southeast border of $Y_\lambda$ determines a path of length $d+m$ from the northeast to the southwest corner of the $d \times m$ rectangle; we call such a path the \emph{boundary path} of $L$.


It is well known that there is a natural bijection $\Phi$ from the set of \reflectbox{L}-diagrams of type $(d,d+m)$ to the set of decorated permutations on $[d+m]$ having exactly $d$ excedances (see \cite[Section~20]{aP06}). Thus, \reflectbox{L}-diagrams of type $(d,d+m)$ also parameterize rank $d$ positroids on the ground set $[d+m]$. Moreover, if $\Phi \colon L \to \pi$ and we label the steps of the boundary path of $L$ in southwest direction, then $i \in [d+m]$ labels a vertical step of the boundary path of $L$ if and only if $i$ is a weak excedance of $\pi$ (see \cite[Lemma~5]{SW07}).

\begin{example} \label{ex:Le-diagram}
	The picture below shows a \reflectbox{L}-diagram $L$ of type $(5,12)$ with its boundary path highlighted. The decorated permutation $\Phi(L)$ is $(1 \ 1\!2 \ 9 \ 2)(3 \ 1\!0 \ 1\!1 \ 7)(4 \ 5)(6 \ 8)$ and, therefore, $L$ parameterizes the positroid $P$ introduced in Example~\ref{ex:positroid}.
	\begin{figure}[h]
		\centering
		\includegraphics[width = 6.0cm]{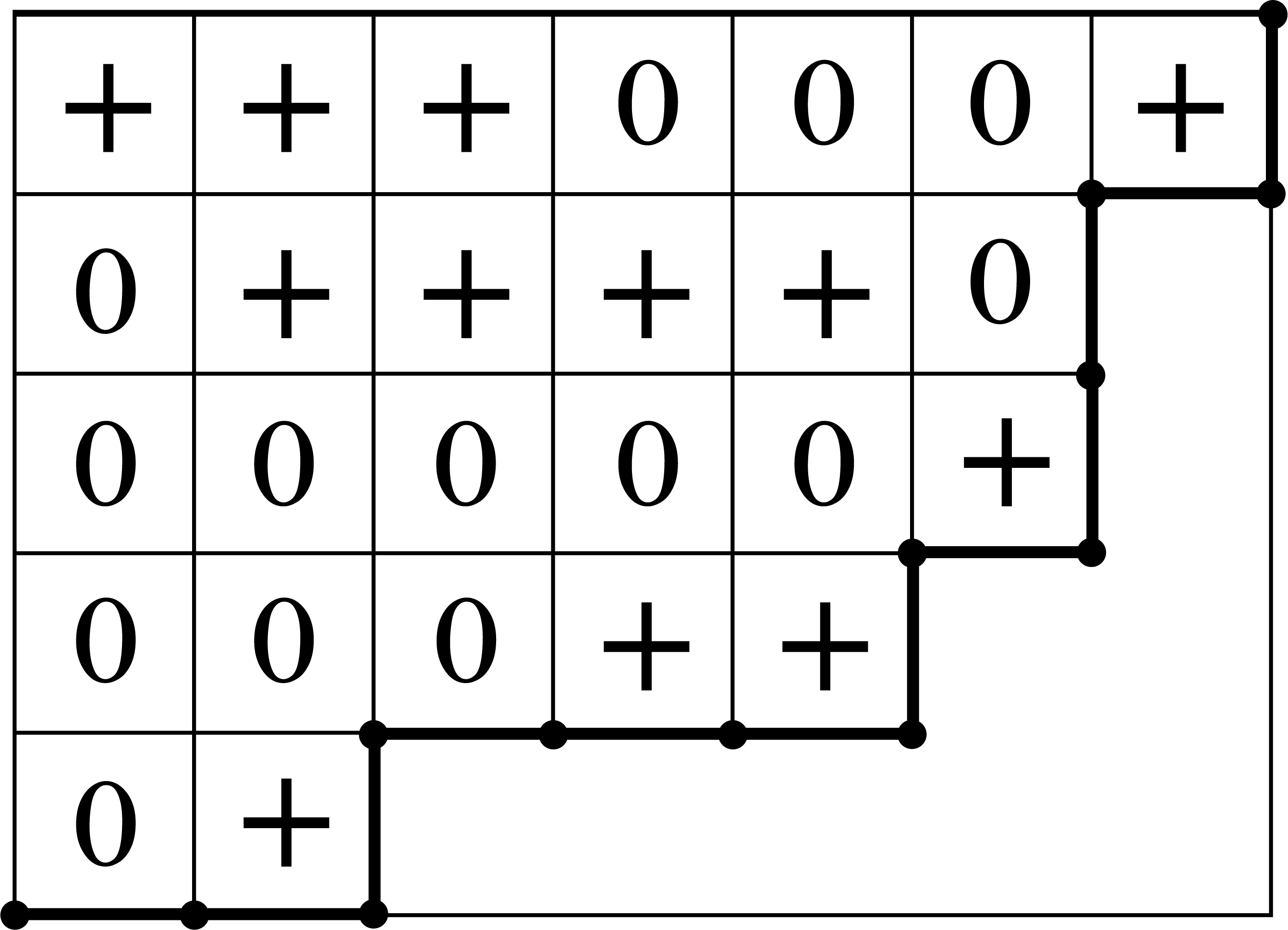}
		\caption{A Le-diagram of type $(5,12)$ and shape $\lambda = (7,6,6,5,2)$.}
		\label{fig:Le-diagram}
	\end{figure}
\end{example}

Let $\lambda$ be a partition, and let $Y_\lambda$ be the Young diagram associated to $\lambda$. We call a \emph{pipe dream} of shape $\lambda$ to a tiling of $Y_\lambda$ by elbow joints $\,\raisebox{-0.1 \height}{\includegraphics[width=0.5cm]{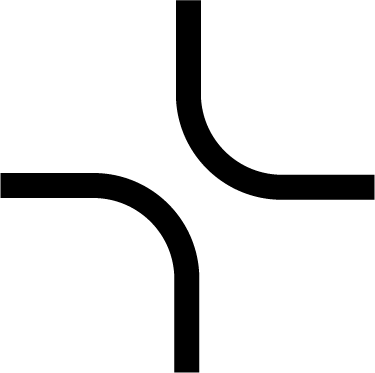}}$ and crosses $\,\raisebox{-0.1 \height}{\includegraphics[width=0.5cm]{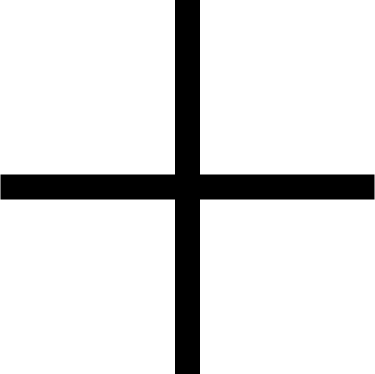}}$. The next lemma yields a method (illustrated in Figure~\ref{fig:Le-diagram and pipe diagram}) to find the decorated permutation $\pi = \Phi(L)$ corresponding to a positroid directly from its \reflectbox{L}-diagram.
 
\begin{lemma}\cite[Lemma~4.8]{ARW16} \label{lem:decorated permutation from Le-diagram}
	Let $L$ be the \reflectbox{\emph{L}}-diagram corresponding to a rank $d$ positroid $P$ on $[d+m]$. We can compute the decorated permutation $\pi$ of $P$ as follows.
	\begin{enumerate}
		\item Replace the pluses in the \reflectbox{\emph{L}}-diagram $L$ with elbow joints $\,\raisebox{-0.1 \height}{\includegraphics[width=0.5cm]{LeDiagramElbow.png}}$ and the zeros in $L$ with crosses $\,\raisebox{-0.1 \height}{\includegraphics[width=0.5cm]{LeDiagramPlus.png}}$ to obtain a pipe dream.
		\item Label the steps of the boundary path with $1, \dots, d+m$ in southwest direction, and then label the edges of the north and west border of $Y_\lambda$ also with $1,\dots, d+m$ in such a way that labels of opposite border steps coincide.
		\item Set $\pi(i) = j$ if the pipe starting at the step labeled by $i$ in the northwest border ends at the step labeled by $j$ in the boundary path. If $\pi$ fixes $j$ write $\pi(j) = \underline{j}$ (resp., $\pi(j) = \overline{j}$) if $j$ labels a horizontal (resp., vertical) step of the boundary path.
	\end{enumerate}
\end{lemma}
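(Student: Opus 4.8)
Since the statement is recorded as \cite[Lemma~4.8]{ARW16}, the citation already discharges it; the plan below reconstructs the idea behind it. The map $\Phi$ is Postnikov's bijection, defined by passing from the \reflectbox{L}-diagram $L$ to an associated planar network (the plabic graph to be introduced in Section~\ref{sec:plabic graph}) and reading off its \emph{trip permutation}: one draws a horizontal and a vertical wire through each box, records an honest vertex at every plus and a free crossing at every zero, and then follows the wires according to the rules of the road. By construction $\pi = \Phi(L)$ is precisely this trip permutation, so the task reduces to showing that the pipe-tracing recipe of the lemma computes the same permutation.

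The first and main step is to set up the dictionary between the pipe dream and the network. Replacing a plus by an elbow joint records that a wire meeting the vertex at a plus box makes a single turn, while replacing a zero by a cross records that two wires traverse a zero box without interacting; with this identification, a pipe entered along a north or west border edge and traced through successive elbows and crosses until it leaves along the boundary path follows exactly the segment-by-segment itinerary of the corresponding trip. Matching the labelings then completes the identification: boundary steps are labeled $1, \dots, d+m$ in the southwest direction and the north and west border edges are labeled so that opposite steps agree, so that the source and the target of each pipe are identified with $i$ and $\pi(i)$, respectively. The treatment of fixed points is consistent as well, since a pipe whose two endpoints carry the same label yields a fixed point of $\pi$, and the convention $\pi(j) = \underline{j}$ for a horizontal step versus $\pi(j) = \overline{j}$ for a vertical step matches the rule separating clockwise from counterclockwise fixed points; this dovetails with the fact recorded above from \cite[Lemma~5]{SW07} that weak excedances label the vertical boundary steps.

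I expect the only genuine obstacle to be bookkeeping the orientation conventions. One must check that the turn forced by an elbow agrees with the maximal-left-versus-maximal-right turning rule of the road at each type of vertex, and that the southwest labeling of the boundary together with the opposite-edge labeling of the northwest border orients every pipe in the direction compatible with the definition of $\Phi$. Once these conventions are pinned down, the equality between the pipe-tracing permutation and the trip permutation is a direct, box-by-box comparison, and the lemma follows.
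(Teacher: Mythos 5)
Your proposal is correct and takes the same route as the paper: the paper itself gives no proof of this lemma, quoting it directly from \cite[Lemma~4.8]{ARW16}, so your opening observation that the citation discharges the statement matches the paper exactly. Your reconstruction of the underlying argument---identifying the pipe dream (pluses as elbows, zeros as crossings) with the trip permutation of the plabic graph/planar network that Postnikov associates to the Le-diagram, then matching the boundary labelings and the loop/coloop conventions for fixed points---is precisely the argument in the cited source, with the only remaining work being the orientation bookkeeping you correctly flag.
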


We have seen in Proposition~\ref{prop:decorated permutation of rational Dyck positroids} that the decorated permutation of a rank $d$ rational Dyck positroid on the ground set $[d+m]$ has $[d]$ as its set of weak excedances. Thus, the \reflectbox{L}-diagram of a rational Dyck positroid has a rectangular shape, namely $m^d$. The next description, which gives a complete characterization of the \reflectbox{L}-diagrams parameterizing rational Dyck positroids, has been proved in \cite{CG17} for the case $d=m$. However, the argument for proving the general case is basically a reproduction of the case $d=m$ and, therefore, we decided to omit it.

\begin{prop} \label{prop:Le-Diagrams of rational Dyck positroids}
	A \reflectbox{\emph{L}}-diagram $L$ of type $(d,d+m)$ corresponds to a rank $d$ rational Dyck positroid on the ground set $[d+m]$ if and only if its shape $\lambda$ is the full $d \times m$ rectangle and $L$ satisfies the following two conditions:
	\begin{enumerate}
		\item every column has exactly on plus except the last one that has $d$ pluses;
		\vspace{3pt}
		\item the horizontal unit steps right below the bottom-most plus of each column are the horizontal steps of a horizontally-reflected rational Dyck path of type $(m,d)$ (see Figure~\ref{fig:Le-diagram and pipe diagram} below).
	\end{enumerate}
\end{prop}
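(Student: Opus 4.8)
The plan is to establish the shape claim and the plus-placement conditions separately, and then to upgrade the result to an equivalence by a bijection-plus-counting argument. Throughout I write $L=\Phi^{-1}(\pi)$ for the \reflectbox{L}-diagram attached to a decorated permutation $\pi$, and I use the pipe-dream recipe of Lemma~\ref{lem:decorated permutation from Le-diagram} to pass between $L$ and $\pi$. First I would pin down the shape. By Proposition~\ref{prop:decorated permutation of rational Dyck positroids} the decorated permutation $\pi$ of a rank $d$ rational Dyck positroid on $[d+m]$ has weak-excedance set exactly $[d]$, and by \cite[Lemma~5]{SW07} the label $i$ sits on a vertical step of the boundary path of $L$ (labeled $1,\dots,d+m$ in the southwest direction) precisely when $i$ is a weak excedance of $\pi$. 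Hence the steps labeled $1,\dots,d$, i.e.\ the first $d$ steps read from the northeast corner, are all vertical. Since the bounding rectangle has only $d$ rows, these vertical steps descend the entire right side, after which the remaining $m$ steps run along the bottom; this is exactly the boundary path of the full $d\times m$ rectangle, so $\lambda=m^d$.

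The heart of the matter is the plus-placement, which I would handle by computing $\Phi$ directly on the candidate diagrams. Given a rational Dyck path $\mathsf{d}$, let $L_{\mathsf{d}}$ be the filling of the $d\times m$ rectangle prescribed by (1) and (2): the last column is all pluses, and for $c<m$ the unique plus of column $c$ sits in the row $r_c$ dictated by condition (2), so that the horizontal steps just below the bottom-most pluses reflect $\mathsf{d}$. I would first check that $L_{\mathsf{d}}$ is a legitimate \reflectbox{L}-diagram: because $\mathsf{d}$ stays below its diagonal, the rows $r_1,\dots,r_{m-1}$ are weakly increasing, and then no zero can have both a plus above it and a plus to its left. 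Indeed, a zero in a column $c<m$ that has a plus above it lies in some row below $r_c$, while every plus in a strictly earlier column lies in a row $\le r_c$, hence not in the same row (and the all-plus last column, being rightmost, never lies to the left of any zero).

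Next, converting pluses to elbows and zeros to crosses and tracing pipes as in Lemma~\ref{lem:decorated permutation from Le-diagram}, I would verify that $\Phi(L_{\mathsf{d}})$ coincides with the decorated permutation read off $\mathsf{d}$ by Proposition~\ref{prop:decoding decorated permutation}. Concretely, each pipe entering the north border of column $c$ runs straight down to the first elbow (at row $r_c$, or row $1$ in the last column), turns east, and then zigzags through the staircase of elbows; each pipe entering the west border of row $i$ runs east to the first column whose plus lies in row $i$ (or to the last column) and turns down. Because the elbow rows are weakly increasing, this routing threads the pipes exactly along the staircase of $\mathsf{d}$, and the resulting input--output pairing reproduces, step by step, the southwest reading of the labels of $\mathsf{d}$; this is the computation illustrated in Figure~\ref{fig:Le-diagram and pipe diagram}. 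Since $\Phi$ is a bijection, it follows that $L_{\mathsf{d}}$ is precisely the \reflectbox{L}-diagram of the rational Dyck positroid $\alpha(\mathsf{d})$ induced by $\mathsf{d}$.

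Finally I would assemble the equivalence. The assignment $\mathsf{d}\mapsto L_{\mathsf{d}}$ is injective, since condition (2) lets one read $\mathsf{d}$ back off $L_{\mathsf{d}}$, and its image is exactly the set of rectangular diagrams satisfying (1) and (2): given any such diagram, condition (2) yields a rational Dyck path of which it is the image. By the previous paragraph this image is precisely the set of \reflectbox{L}-diagrams of rational Dyck positroids, which under $\Phi$ and the bijection between positroids and decorated permutations is in one-to-one correspondence with $\mathcal{P}_{d,m}$; both sides are therefore counted by the rational Catalan number $\text{Cat}(m,d)$, so the characterization holds in both directions. I expect the main obstacle to be the pipe-tracing bookkeeping in the third paragraph, namely verifying in general that the weakly increasing elbow rows route every pipe so that its output label matches the southwest reading of $\mathsf{d}$. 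Since this verification runs in parallel with the $d=m$ case treated in \cite{CG17}, the remaining details are routine, if somewhat tedious.
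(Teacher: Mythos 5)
Your proposal is correct and takes essentially the same route as the paper, which omits its proof precisely because it is a reproduction of the $d=m$ case in \cite{CG17}: pin down the rectangular shape via the weak-excedance set $[d]$ from Proposition~\ref{prop:decorated permutation of rational Dyck positroids} together with \cite[Lemma~5]{SW07}, check the Le-condition from the weakly increasing plus rows, verify via the pipe-dream recipe of Lemma~\ref{lem:decorated permutation from Le-diagram} that $\Phi(L_{\mathsf{d}})$ is the permutation read off $\mathsf{d}$ as in Proposition~\ref{prop:decoding decorated permutation}, and conclude by bijectivity of $\Phi$. The only remark is that your final counting step is superfluous, since your identification of the image of $\mathsf{d}\mapsto L_{\mathsf{d}}$ with the set of Le-diagrams of rational Dyck positroids already yields both directions of the equivalence.
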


\begin{example}
	Let $P$ be the rank $5$ rational Dyck positroid on the ground set $[13]$ having decorated permutation $\pi = (1 \ 2 \ 1\!3 \ 1\!2 \ 3 \ 1\!1 \ 1\!0 \ 4 \ 9 \ 5 \ 8 \ 7 \ 6)$. The following picture showing the \reflectbox{L}-diagram corresponding to $P$ along with its associated pipe dream sheds light upon the recipe described in Lemma~\ref{lem:decorated permutation from Le-diagram}.
	
	\begin{figure}[h]
		\centering
		\raisebox{0.13 \height}{\includegraphics[width = 6.0cm]{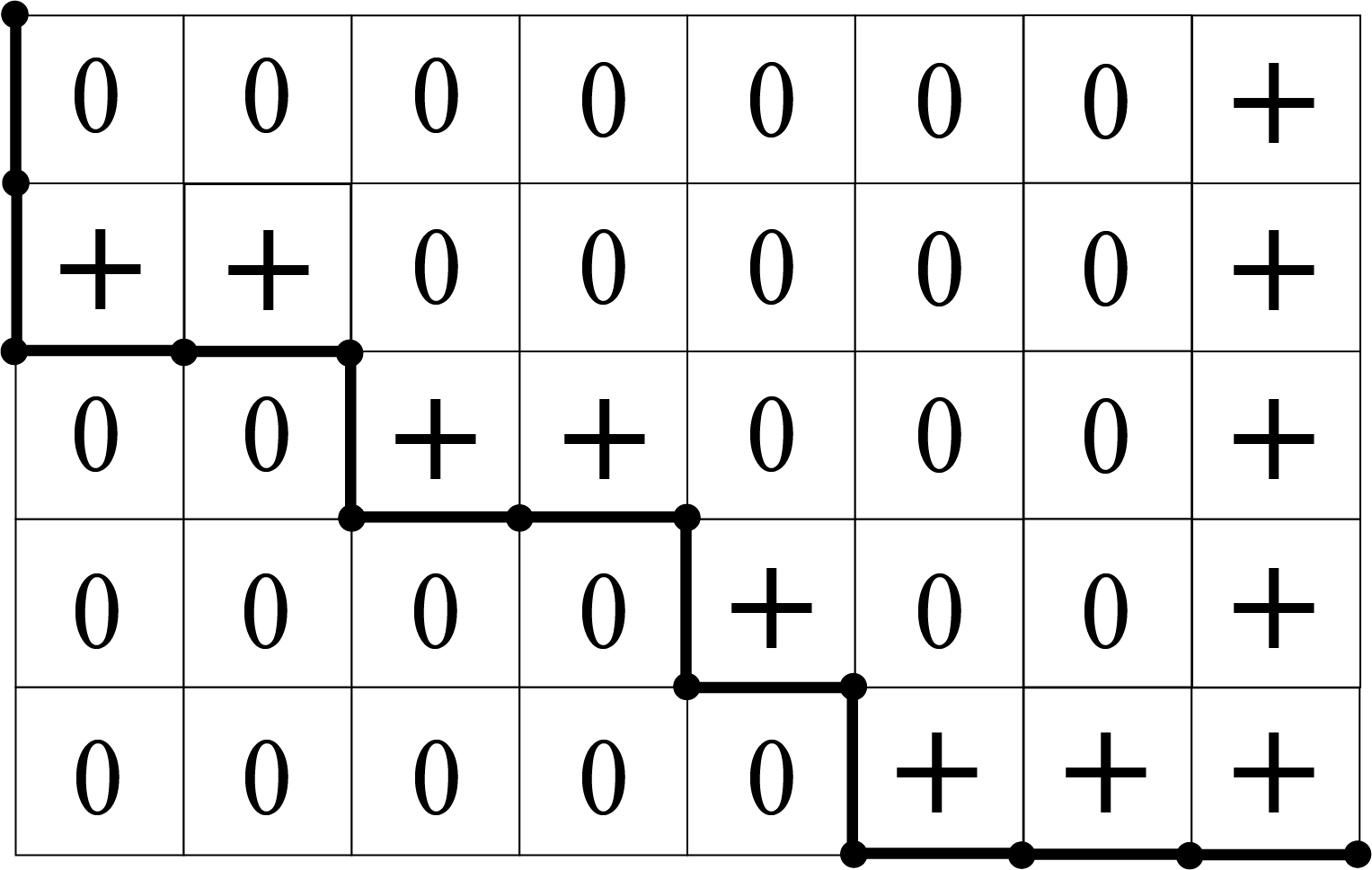}} \hspace{2.1cm}
		\includegraphics[width = 6.4cm]{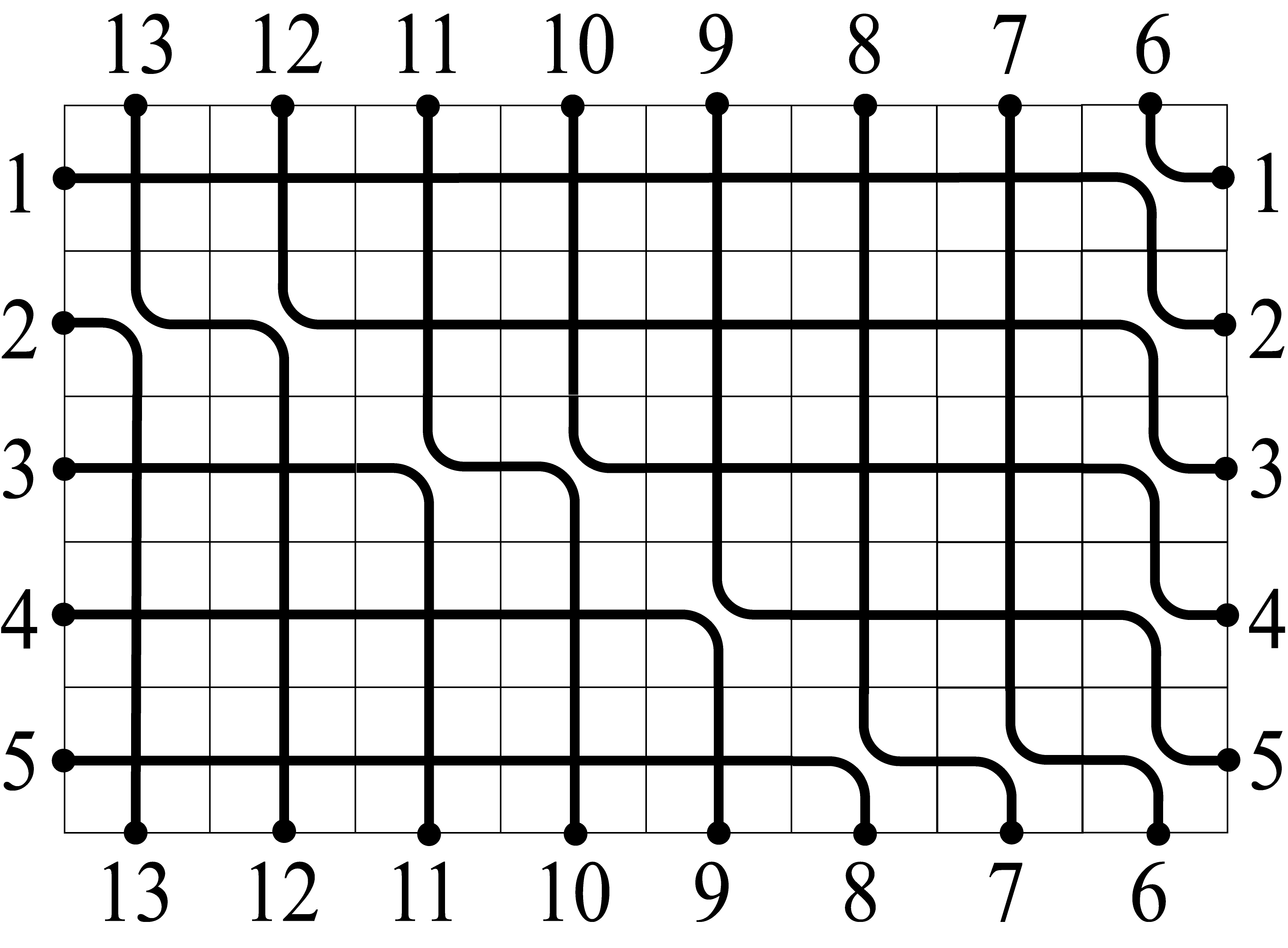}
		\caption{The Le-diagram of $P$ on the left and the corresponding pipe dream giving rise to $\pi$.}
		\label{fig:Le-diagram and pipe diagram}
	\end{figure}

\end{example}

For $d \le n$, the \emph{real Grassmannian} $\text{Gr}_{d,n}$ is the set of all subspaces of $\rr^n$ of dimension $d$. Each element of the real Grassmannian can be thought of as the row span $[A]$ of a full-rank $d \times n$ real matrix $A$. Therefore $\text{Gr}_{d,n}$ is the quotient of the set comprising all full-rank $d \times n$ real matrices under the left action of $\text{GL}_d(\rr)$. For $[A] \in \text{Gr}_{d,n}$, let $M_{[A]}$ denote the rank $d$ matroid represented by $A$ (the matroid $M_{[A]}$ does not depend on the matrix representation of $[A]$). Every rank $d$ representable matroid $M$ determines a \emph{matroid stratum}
\[
	S_M := \{[A] \in \text{Gr}_{d,n} \mid M_A = M \}.
\]
The nonempty matroid strata $S_M$ induce a subdivision of $\text{Gr}_{d,n}$, which is known as \emph{Gelfand-Serganova strata}. The \emph{totally nonnegative Grassmannian} $(\text{Gr}_{d,n})_{\ge 0}$ is defined as the quotient
\[
	(\text{Gr}_{d,n})_{\ge 0} := \text{GL}^+_d(\rr) \! \setminus \! \text{Mat}^+_{d,n}(\rr),
\]
where $\text{GL}_d^+(\rr)$ is the set of all $d \times d$ real matrices having positive determinant. Each rank $d$ positroid $P$ on the ground set $[n]$ determines a \emph{positroid cell} $S^+_P = S_P \cap (\text{Gr}_{d,n})_{\ge 0}$ inside $(\text{Gr}_{d,n})_{\ge 0}$, where $S_P$ is the matroid stratum of $P$ in $\text{Gr}_{d,n}$. Positroid cells are, therefore, naturally indexed by \reflectbox{L}-diagrams. It is well known that the number of pluses inside each indexing \reflectbox{L}-diagram equals the dimension of its positroid cell (see \cite{gL98} and \cite{aP06}). The next corollary follows immediately from Proposition~\ref{prop:Le-Diagrams of rational Dyck positroids}.
	
\begin{cor}
	The positroid cell parameterized by a rank $d$ rational Dyck positroid on the ground set $[d+m]$ inside the corresponding Grassmannian cell complex has dimension $d+m-1$.
\end{cor}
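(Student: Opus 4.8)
The plan is to read off the dimension directly from the combinatorial data supplied by Proposition~\ref{prop:Le-Diagrams of rational Dyck positroids}, using the fact recalled immediately before the corollary: the number of plus entries inside the indexing \reflectbox{L}-diagram of a positroid cell equals the dimension of that cell (see~\cite{gL98} and~\cite{aP06}). So the entire argument reduces to a count of pluses, with no geometry or further structural analysis needed.

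First I would invoke Proposition~\ref{prop:Le-Diagrams of rational Dyck positroids} to fix the combinatorial model: the \reflectbox{L}-diagram $L$ parameterizing a rank $d$ rational Dyck positroid on $[d+m]$ has shape the full $d \times m$ rectangle, so $L$ has exactly $m$ columns, each of height $d$. Next I would use condition~(1) of that proposition, which asserts that every column of $L$ carries exactly one plus except the last column, which carries $d$ pluses. Summing over the columns, the first $m-1$ columns contribute $(m-1)\cdot 1 = m-1$ pluses and the last column contributes $d$ pluses, so the total number of pluses in $L$ is
\[
	(m-1) + d = d + m - 1.
\]

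Finally I would apply the dimension formula for positroid cells: since the positroid cell parameterized by $P$ is indexed by $L$, and the dimension of that cell equals the number of pluses of $L$, the cell has dimension $d+m-1$, as claimed. I do not expect a genuine obstacle here, as the statement is an immediate corollary; the only point worth flagging is that condition~(2) of Proposition~\ref{prop:Le-Diagrams of rational Dyck positroids} (which prescribes how the bottom-most pluses trace out a reflected rational Dyck path) plays no role in the count—the dimension depends only on the total number of pluses, which is already pinned down by condition~(1) together with the fact that the shape is the full rectangle.
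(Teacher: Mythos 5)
Your proposal is correct and matches the paper's (implicit) argument exactly: the paper also deduces the corollary directly from Proposition~\ref{prop:Le-Diagrams of rational Dyck positroids} by counting $(m-1)\cdot 1 + d = d+m-1$ pluses in the full $d \times m$ rectangular \reflectbox{L}-diagram and invoking the fact that the number of pluses equals the cell dimension. Your remark that condition~(2) is irrelevant to the count is also accurate.
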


\section{Plabic Graphs} \label{sec:plabic graph}

Let us now proceed to characterize the move-equivalence classes of plabic graphs (up to homotopy) corresponding to rational Dyck positroids.

\begin{definition}
	A \emph{plabic graph} is an undirected graph $G$ drawn inside a disk $D$ (up to homotopy) which has a finite number of vertices on the boundary of $D$. Each of the remaining vertices of $G$ is strictly inside the disk and colored either black or white. Each vertex on the boundary of $D$ is incident to a single edge.
\end{definition}

With notation as in the above definition, the vertices in the interior of $D$ are called \emph{internal vertices} of $G$ while the vertices on the boundary of $D$ are called \emph{boundary vertices} of $G$. In the context of this paper the boundary vertices of $G$ are always going to be clockwise labeled starting by $1$. Also, every plabic graph here is assumed to be \emph{leafless} (there are no internal vertices of degree one) and without isolated components. For the rest of this section, let $G$ denote a plabic graph with $n$ boundary vertices.

A \emph{perfect orientation} $\mathcal{O}$ of $G$ is a choice of directions for every edge of $G$ in such a way that black vertices have outdegree one and white vertices have indegree one. If $G$ admits a perfect orientation $\mathcal{O}$, we call $G$ \emph{perfectly orientable} and let $G_{\mathcal{O}}$ denote the directed graph on $G$ determined by $\mathcal{O}$. A boundary vertex $v$ of an oriented plabic graph $G_{\mathcal{O}}$ is a \emph{source} (resp., \emph{sink}) if $v$ has indegree (resp., outdegree) zero. The set of boundary vertices that are sources (resp., sinks) of $G_{\mathcal{O}}$ is denoted by $I_{\mathcal{O}}$ (resp., $\bar{I}_{\mathcal{O}}$). It is known that any two perfect orientations of the same plabic graph $G$ have source sets of the same size
\begin{equation} \label{eq:type of a plabic graph}
	d := \frac 12 \bigg(n + \sum_{v \text{ black}} \big(\deg(v) - 2\big) + \sum_{v \text{ white}}\big(2 - \deg(v)\big)\bigg).
\end{equation}
The \emph{type} of $G$ is defined to be $(d,m)$. See \cite{aP06} for more details.

The next local transformations will partition the set of plabic graphs into equivalence classes. We will see later that such a set of equivalence classes is in one-to-one correspondence with the set of positroids.

(M1) {\bf Square move:} If $G$ has a square consisting of four trivalent vertices whose colors alternate, then the colors of these four vertices can be simultaneously switched.
\begin{figure}[h]
	\centering
	\includegraphics[width = 6cm]{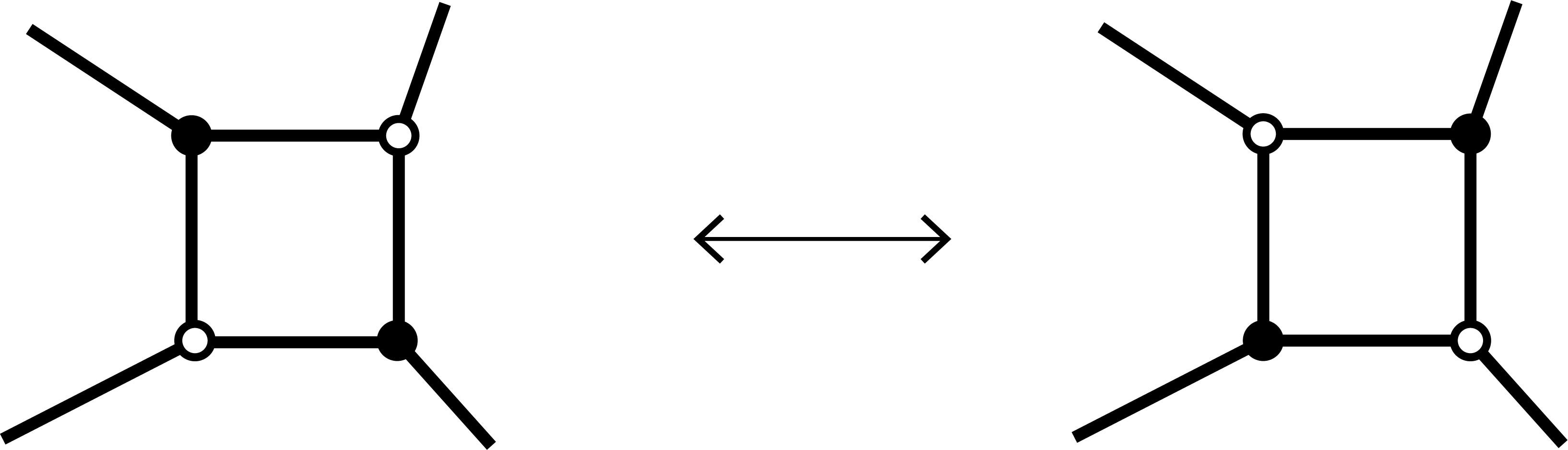}
\end{figure}

(M2) {\bf Unicolored edge contraction/uncontraction:} If $G$ contains two adjacent vertices of the same color, then any edge joining these two vertices can be contracted into a single vertex with the same color of the two initial vertices. Conversely, a given vertex of $G$ can be uncontracted into an edge joining vertices of the same color as the given vertex.
\begin{figure}[h]
	\centering
	\includegraphics[width = 6cm]{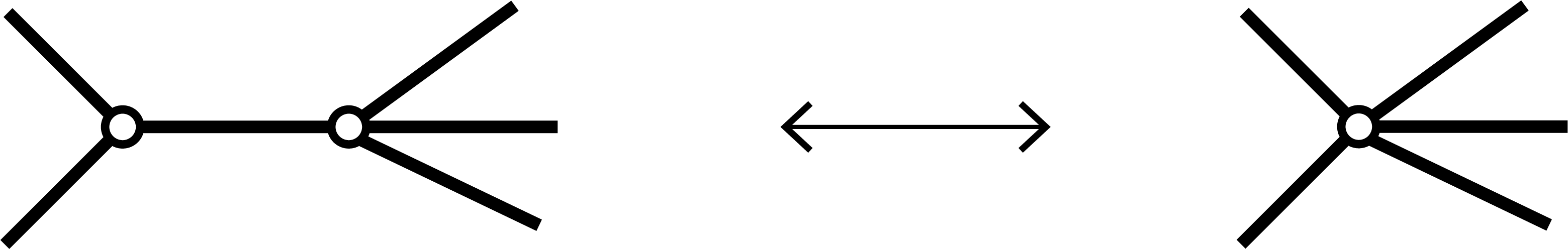}
\end{figure}

(M3) {\bf Middle vertex insertion/removal:} If $G$ contains a vertex of degree $2$, then this vertex can be removed and its incident edges can be glued together. Conversely, a vertex (of any color) can be inserted in the middle of any edge of $G$.
\begin{figure}[h]
	\centering
	\hspace{0.2cm} \includegraphics[width = 6cm]{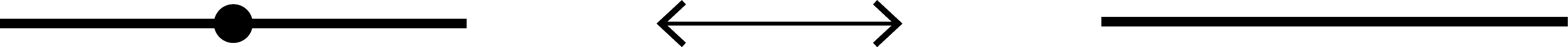}
\end{figure}

(R1) {\bf Parallel edge reduction:} If $G$ contains two trivalent vertices of different colors connected by a pair of parallel edges, then these vertices and edges can be deleted, and the remaining two edges can be glued together.
\begin{figure}[h]
	\centering
	\includegraphics[width = 7cm]{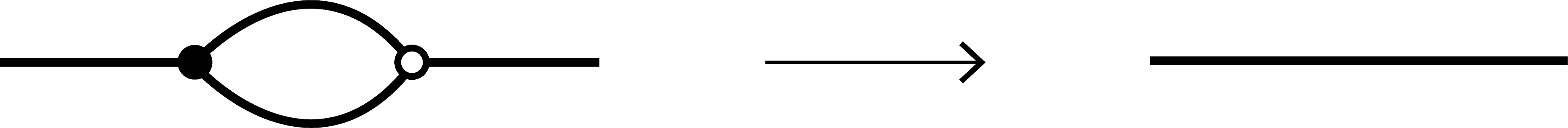}
\end{figure}

Two plabic graphs are called \emph{move-equivalent} if they can be obtained from each other by applying the local transformations (M1), (M2), and (M3); this defines an equivalence relation on the set of plabic graphs. A leafless plabic graph $G$ without isolated components is said to be \emph{reduced} if (R1) cannot be applied to any plabic graph in the move-equivalence class of $G$. Any reduced plabic graph is known to be perfectly orientable.

We define $\pi_G$ by setting $\pi_G(i) = j$ if there is a path in $G$ from the boundary vertex $i$ to the boundary vertex $j$ that turns left (resp., right) at any internal black (resp., white) vertex; such a path is said to follow the ``rules of the road." If $i$ is fixed by $\pi_G$, then we mark $i$ clockwise (resp., counterclockwise) whenever the internal vertex of $G$ adjacent to $i$ is black (resp., white).

\begin{definition}
	For a plabic graph $G$, the trip $\pi_G$ described above is called the \emph{decorated trip permutation} of $G$.
\end{definition}

\begin{theorem} \cite[Theorem~13.4]{aP06}
	Two reduced plabic graphs are move-equivalent if and only if they have the same decorated trip permutation.
\end{theorem}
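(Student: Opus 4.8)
The plan is to prove the two implications separately. The forward implication---that move-equivalent reduced plabic graphs carry the same decorated trip permutation---is the routine direction and reduces to checking that each local move leaves the trips invariant. The reverse implication---that equality of decorated trip permutations forces move-equivalence---carries the genuine content, and I would establish it by exhibiting a canonical representative in each move-equivalence class and showing that every reduced plabic graph transforms by moves into the canonical one attached to its trip permutation.

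For the forward direction I would verify invariance of the rules-of-the-road trips one move at a time. For (M3), inserting or deleting a degree-two vertex merely reroutes a trip along the glued edge, so its entry and exit points on the boundary of the disk are unchanged. For (M2), a trip entering a contracted unicolored region turns consistently at the merged vertex exactly as it did at the two original vertices of the same color, so again its endpoints are preserved. The only case demanding real attention is the square move (M1): here one lists the trips entering the four-cycle through each of its external edges and confirms that, after the simultaneous color swap, each exits through the same external edge as before; since these external edges are the only routes by which a trip can cross the patch, the boundary endpoints, and hence $\pi_G$, are unaffected. Fixed points together with their clockwise/counterclockwise decorations are settled by the identical local analysis, the decoration being read off from the color of the internal vertex adjacent to the relevant boundary vertex.

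For the reverse direction I would exploit the \reflectbox{L}-diagram machinery already assembled. Given a decorated permutation $\pi$ with $d$ weak excedances, the bijection $\Phi$ produces an \reflectbox{L}-diagram $L$ of type $(d,d+m)$, and replacing its pluses by elbow joints and its zeros by crosses yields a pipe dream whose associated plabic graph $G_\pi$ is reduced with trip permutation $\pi$; this is precisely the recipe of Lemma~\ref{lem:decorated permutation from Le-diagram}, now read as a construction rather than a computation. Taking $G_\pi$ as the canonical representative, it remains to show that an arbitrary reduced plabic graph $G$ with trip permutation $\pi$ is move-equivalent to $G_\pi$. I would argue this by induction on the number of faces of $G$, peeling off one boundary bridge at a time: (M2) and (M3) normalize the vertices near the boundary, while (M1) commutes the internal crossings into the order prescribed by the wiring pattern of $L$. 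The structural fact driving each step is that, because $G$ is reduced, no trip crosses itself and no two trips cross twice, so the crossings admit a consistent linear order that the square and contraction moves can rearrange.

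The main obstacle will be this inductive reduction to canonical form. One must show both that the three moves actually suffice to realize any reordering of trip crossings compatible with the fixed permutation $\pi$, and that the peeling procedure terminates. The reducedness hypothesis is essential precisely here: it is what excludes the ``bad'' double crossings that no sequence of moves could ever remove, and making the argument rigorous amounts to tracking how each move acts on the combinatorial record of trip crossings. That bookkeeping is where the bulk of the effort lies, and it is the step I expect to be the hardest to pin down cleanly.
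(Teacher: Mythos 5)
This theorem is not proved in the paper at all: it is quoted verbatim from Postnikov \cite[Theorem~13.4]{aP06}, so there is no in-paper argument to compare yours against. Measured instead against Postnikov's actual proof, your forward direction is correct and standard --- a case check that each of (M1), (M2), (M3) preserves the rules-of-the-road trips and the lollipop decorations, with (M1) the only case needing genuine verification --- and your overall plan for the converse (a canonical representative per decorated permutation, built from the \reflectbox{L}-diagram pipe dream, plus an inductive normalization by moves) is the right general shape; Postnikov's Section~13 does proceed by transforming reduced graphs toward a normal form.

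The genuine gap is that your proposal stops exactly where the theorem's content begins, and in two concrete places. First, the ``structural fact driving each step'' --- that in a reduced plabic graph no trip is essentially self-intersecting and no two trips have a bad double crossing --- is not a freely available consequence of the definition of reduced; it is itself a substantial theorem (\cite[Theorem~13.2]{aP06}), and your argument cannot invoke it without proving it, since the definition of reduced in this paper is only that (R1) is inapplicable throughout the move-equivalence class. Second, the inductive step is asserted rather than argued: ``peeling off one boundary bridge at a time'' comes with no decreasing measure, no proof that (M1)--(M3) can always be applied to bring the relevant crossing into position (this is the place where, unlike reduced words in $S_n$, there is no off-the-shelf Tits-type exchange lemma), and no termination argument. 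Saying that the moves ``suffice to realize any reordering of trip crossings compatible with the fixed permutation $\pi$'' is a paraphrase of the theorem itself, not a step toward it. To your credit you flag this bookkeeping as the hard part, but flagging it does not discharge it: as written, the hard direction is an outline of the statement, not a proof, and completing it honestly would require reproducing essentially all of the trip/double-crossing machinery of \cite[Sections~13--14]{aP06}.
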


As Grassmann necklaces, decorated permutations, and \reflectbox{L}-diagrams, move-equivalence classes of plabic graphs of type $(d,n)$ also parameterize rank $d$ positroids on the ground set $[n]$.

\begin{prop} \cite[Section~11]{aP06}
	For each pair of positive integers $d$ and $n$ with $d \le n$ the assignment $G \mapsto P_G = ([n], \mathcal{B}_G)$, where
	\[
		\mathcal{B}_G = \{I_{\mathcal{O}} \mid \mathcal{O} \ \text{is a perfect orientation of} \ G\},
	\]
	is a one-to-one correspondence between move-equivalence classes of perfectly orientable plabic graphs of type $(d,n)$ and rank $d$ positroids on the ground set $[n]$.
\end{prop}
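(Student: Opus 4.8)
The plan is to route the correspondence through the decorated permutation, using the decorated trip permutation $\pi_G$ as the bridge between plabic graphs and positroids, and to invoke \cite[Theorem~13.4]{aP06} for the bijectivity. First I would show that $\mathcal{B}_G$ is the set of bases of a rank $d$ positroid. Fix a perfect orientation $\mathcal{O}$ of $G$, assign formal positive weights to its edges, and form the associated boundary measurement matrix, whose entries are the generating functions of directed paths running from the sources to the boundary. A Lindstr\"{o}m--Gessel--Viennot type argument expresses each maximal minor (Pl\"{u}cker coordinate) of this matrix as a subtraction-free sum over families of pairwise non-intersecting directed paths; consequently such a minor $\Delta_J$ is nonzero exactly when $J = I_{\mathcal{O}'}$ for some perfect orientation $\mathcal{O}'$ obtained from $\mathcal{O}$ by reversing directed source-to-sink paths. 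This simultaneously certifies that $\mathcal{B}_G = \{I_{\mathcal{O}}\}$ is the collection of nonvanishing maximal minors of a totally nonnegative matrix, hence the set of bases of a rank $d$ positroid on $[n]$, with $d$ given by the formula in \eqref{eq:type of a plabic graph}.

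Next I would check that $P_G$ depends only on the move-equivalence class of $G$. Since (M1), (M2), and (M3) are local transformations, it suffices to verify that each one leaves the family of attainable source sets $\{I_{\mathcal{O}}\}$ unchanged. For the unicolored edge contraction (M2) and the middle-vertex insertion (M3) this is transparent, as neither operation affects which boundary vertices can simultaneously be sources in a perfect orientation; for the square move (M1) one inspects the finitely many local orientation patterns of the square directly. Hence the assignment descends to a well-defined map on move-equivalence classes, and after applying (R1) to pass to a reduced representative we may assume $G$ is reduced.

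The crux is to identify $\pi_G$ with the decorated permutation of $P_G$. Following a trip according to the rules of the road corresponds precisely to flipping the orientation along a single directed path, which exchanges one source with one sink and thus realizes a single transition $I_i \mapsto I_{i+1}$ of the Grassmann necklace of $P_G$; feeding these transitions into the recipe of Section~\ref{sec:decorated permutation} yields $\pi_{\mathcal{I}(P_G)} = \pi_G$. Granting this, both directions follow. For surjectivity, given a rank $d$ positroid $P$ with decorated permutation $\pi$, I would take the \reflectbox{L}-diagram of $\pi$ and replace its entries by elbows and crosses as in Lemma~\ref{lem:decorated permutation from Le-diagram}, producing a reduced plabic graph $G$ with $\pi_G = \pi$ and hence $P_G = P$. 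For injectivity, if two reduced graphs $G, G'$ satisfy $P_G = P_{G'}$, then $\pi_G = \pi_{G'}$ because positroids correspond bijectively to decorated permutations, so \cite[Theorem~13.4]{aP06} forces $G$ and $G'$ to be move-equivalent.

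The main obstacle I anticipate is the first step: rigorously establishing, through the boundary measurement map, that the source sets $\{I_{\mathcal{O}}\}$ form the bases of a matroid whose Pl\"{u}cker coordinates are all nonnegative. This rests on the non-intersecting flow interpretation of the minors together with a careful treatment of the sign conventions in the boundary measurement matrix, and it is where the genuine positroid content lies; once $\pi_G$ has been matched with the decorated permutation of $P_G$, the remaining steps are comparatively formal.
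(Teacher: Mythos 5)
The paper offers no proof of this proposition: it is imported verbatim from Postnikov \cite[Section~11]{aP06}, so there is no internal argument to compare against. Your outline is, in essence, a faithful condensation of Postnikov's original proof: the boundary measurement matrix with its subtraction-free (Lindstr\"{o}m--Gessel--Viennot/flow) expansion of Pl\"{u}cker coordinates, the identification of nonvanishing minors with source sets of perfect orientations, invariance of the image under (M1)--(M3), the matching of the trip permutation with the decorated permutation, the Le-diagram-to-pipe-dream construction for surjectivity, and \cite[Theorem~13.4]{aP06} for injectivity --- so this is the same approach as the cited source, correctly assembled.

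One wrinkle deserves a flag, though it afflicts the statement as transcribed more than your argument: (R1) is \emph{not} one of the moves defining move-equivalence, so ``applying (R1) to pass to a reduced representative'' changes the equivalence class. Since (R1) preserves the boundary measurement image, a non-reduced graph and its (R1)-reduction lie in \emph{different} move-classes yet represent the \emph{same} positroid, so the assignment is not injective on all perfectly orientable plabic graphs; Postnikov's bijection is between positroids and move-equivalence classes of \emph{reduced} plabic graphs, which is exactly the setting in which your injectivity step (via $\pi_G = \pi_{G'}$ and Theorem~13.4) is valid. Your proof is therefore a proof of the corrected statement, and you should state that restriction explicitly rather than fold it into a casual reduction step.
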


\begin{example}
	Let $P$ be the rank  $5$ positroid on the ground set $[12]$ introduced in Example~\ref{ex:positroid}. The following picture shows an oriented plabic graph $G_{\mathcal{O}}$ corresponding to $P$. The perfect orientation $\mathcal{O}$ gives the basis $I_{\mathcal{O}} = \{1,5,6,8,10\}$ of $P$. We have seen before that the decorated permutation $\pi$ corresponding to $P$ has disjoint cycle decomposition $(1 \ 1\!2 \ 9 \ 2)(3 \ 1\!0 \ 1\!1 \ 7)(4 \ 5)(6 \ 8)$. In particular, $\pi(3) = 10$, which is indicated by the directed path from $3$ to $10$ highlighted in the picture; observe that such a path follows the rules of the road.
	
	\begin{figure}[h]
		\centering
		\includegraphics[width = 5.7cm]{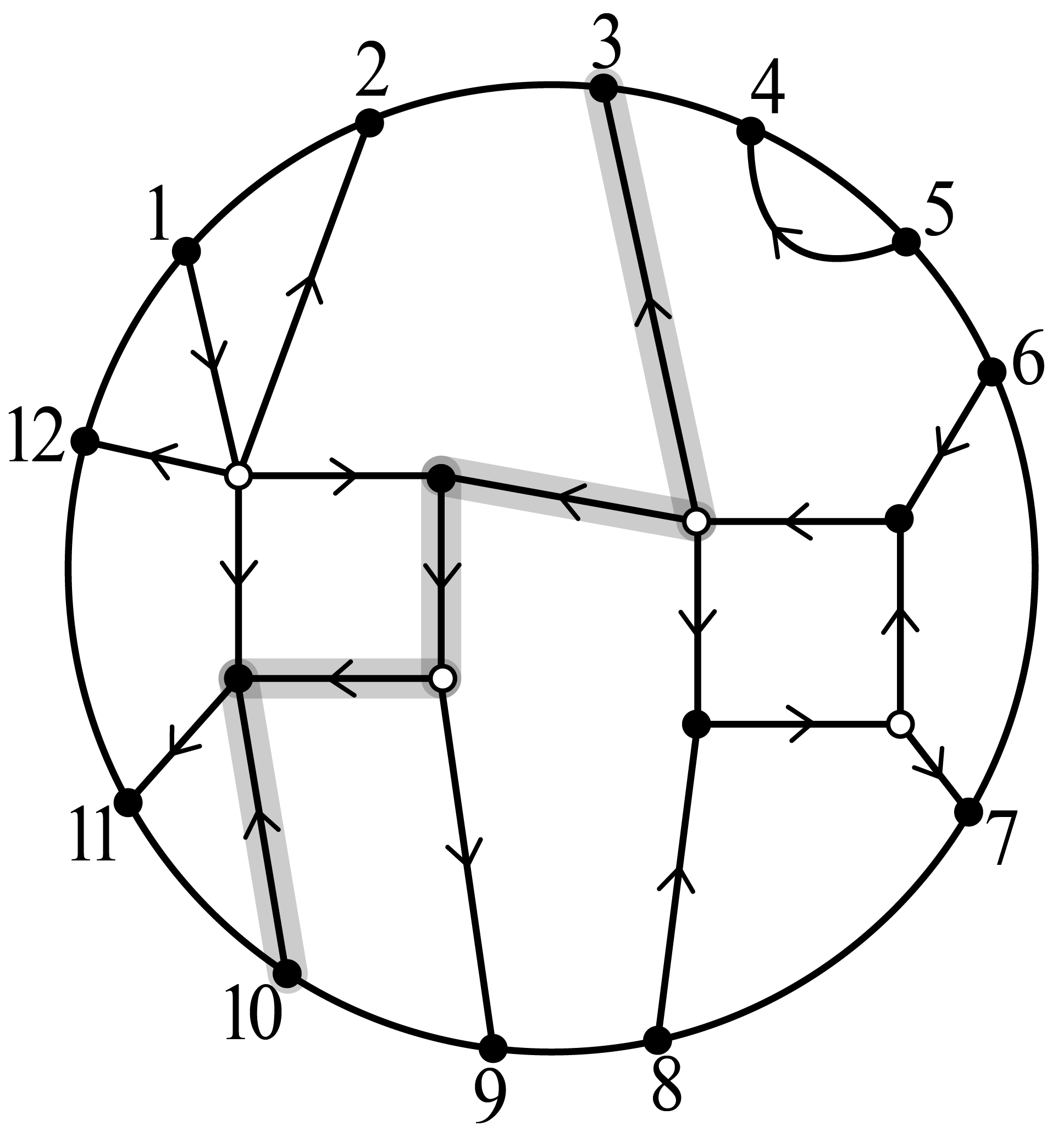}
		\caption{A plabic graph with a perfect orientation.}
		\label{fig:plabic graph}
	\end{figure}
	
\end{example}

We conclude this section characterizing the plabic graphs corresponding to rational Dyck positroids.

\begin{prop} \label{prop:the plabic graph of a rational Dyck positroid}
	A rational Dyck path $\mathsf{d}$ of type $(m,d)$ induces a reduced plabic graph $G_{\mathsf{d}}$ of type $(d,d+m)$ as follows:
	\begin{enumerate}
		\item Draw a circle with $(0,0)$ and $(m,d)$ diametrically opposed, and draw a black (resp., white) vertex in the middle of each vertical (resp., horizontal) step of $\mathsf{d}$.
		\item Draw a horizontal segment from each white vertex to the circle (going east) and label the intersections by $1, \dots, d$ (clockwise). Similarly, draw a vertical segment from each black vertex to the circle (going north) and label the intersections by $d+1, \dots, d+m$ (clockwise).
		\item Finally, join consecutive internal vertices in $\mathsf{d}$ by segments and ignore the initial rational Dyck path $\mathsf{d}$ (see Figure~\ref{fig:plabic graph of a rational Dyck positroid and its reduced representative.}).
	\end{enumerate}
\end{prop}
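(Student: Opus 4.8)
The plan is to verify the three assertions packaged into the statement --- that $G_{\mathsf d}$ is a bona fide plabic graph, that it has type $(d,d+m)$, and that it is reduced --- and then, since this proposition is what lets us speak of ``the plabic graph of a rational Dyck positroid'', to identify its decorated trip permutation with the one read off $\mathsf d$ in Proposition~\ref{prop:decoding decorated permutation}. First I would record the combinatorial shape of $G_{\mathsf d}$. Since $\mathsf d$ has $d+m$ unit steps, the construction produces internal vertices $v_1,\dots,v_{d+m}$, listed in the order their steps occur along $\mathsf d$, together with $d+m$ boundary vertices, one pendant edge attaching each $v_i$ to the circle, and one edge joining each consecutive pair $v_i,v_{i+1}$. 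Thus $G_{\mathsf d}$ has $2(d+m)$ vertices and $(d+m)+(d+m-1)=2(d+m)-1$ edges, is connected, and is acyclic; in other words, \emph{$G_{\mathsf d}$ is a tree}. Every internal vertex has degree $2$ or $3$, so the graph is leafless with no isolated components, and because the $v_i$ lie on the $x$- and $y$-monotone staircase $\mathsf d$ the pendant edges can be routed outward to the boundary circle without crossings, so $G_{\mathsf d}$ is a genuine plabic graph drawn in the disk.

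Next I would pin down the degrees to compute the type through~\eqref{eq:type of a plabic graph}. Because $\mathsf d$ stays weakly below $y=(d/m)x$, its first step must be horizontal and its last step vertical; hence $v_1$ is a white vertex of degree $2$, $v_{d+m}$ is a black vertex of degree $2$, and every other $v_i$ has degree $3$. There are $d$ black vertices (the vertical steps) and $m$ white vertices (the horizontal steps), so
\[
\sum_{v\text{ black}}(\deg v-2)=(d-1)\cdot 1+0=d-1,\qquad
\sum_{v\text{ white}}(2-\deg v)=0+(m-1)\cdot(-1)=1-m.
\]
Substituting $n=d+m$ into~\eqref{eq:type of a plabic graph} gives source-set size $\tfrac12\big((d+m)+(d-1)+(1-m)\big)=d$, so $G_{\mathsf d}$ has type $(d,d+m)$, as claimed.

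The reduced-ness is where I would exploit the tree structure, and this is the conceptual heart of the argument. Each of the moves (M1), (M2), (M3) preserves being a tree: the square move (M1) requires a four-cycle and hence can never be applied to a tree at all, while the unicolored contractions/uncontractions of (M2) and the degree-two vertex insertions/removals of (M3) are exactly the operations that smooth or subdivide a tree into another tree, never creating a cycle or a pair of parallel edges. Consequently every plabic graph in the move-equivalence class of $G_{\mathsf d}$ is again a tree. But the reduction (R1) requires two trivalent vertices joined by a pair of parallel edges, a configuration that no tree contains; therefore (R1) cannot be applied anywhere in the move-equivalence class of $G_{\mathsf d}$, which is precisely the definition of reduced. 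Note that this argument uses only the definitions of the moves given in the paper, so it needs neither a face count nor the relation between faces and cell dimension.

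Finally, to justify that $G_{\mathsf d}$ parameterizes the rational Dyck positroid induced by $\mathsf d$, I would compute $\pi_{G_{\mathsf d}}$ by following the rules of the road through the caterpillar, turning left at each black (vertical-step) vertex and right at each white (horizontal-step) vertex, and check that the resulting decorated permutation coincides with the one obtained by reading the step labels of $\mathsf d$ in the southwest direction in Proposition~\ref{prop:decoding decorated permutation}. I expect this trip computation to be the main obstacle: it requires a careful case analysis of the left/right turn taken at each degree-$3$ vertex in terms of the local shape of $\mathsf d$ (and of the cyclic boundary order produced in step~(2) of the construction), whereas the well-definedness, the type, and especially the reduced-ness are comparatively immediate once the tree structure is observed.
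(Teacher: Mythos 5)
Your proposal is correct, and on the whole it follows the same skeleton as the paper: establish that the recipe gives a plabic graph, compute the type from formula~\eqref{eq:type of a plabic graph} using the fact that exactly two internal vertices (the first white and the last black along $\mathsf{d}$) have degree $2$, and deduce reducedness from the absence of cycles, since (R1) needs a pair of parallel edges. Your type computation is identical to the paper's, including the observation that the first step of $\mathsf{d}$ is horizontal and the last is vertical. Where you genuinely diverge is in how tree-ness is propagated across the move-equivalence class, and here your route is actually tighter than the paper's. The paper proves that the specific representative $G_{\mathsf{d}}$ is a tree by contradiction (a cycle would consist of trivalent internal vertices, and connectivity would force \emph{all} internal vertices onto the cycle, contradicting the existence of degree-$2$ internal vertices), and then extends this to the whole class by asserting that move-equivalent graphs are ``homotopic'' --- a claim that is dubious as stated, since (M1)--(M3) genuinely change the underlying graph. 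You instead prove $G_{\mathsf{d}}$ is a tree by a direct count ($2(d+m)$ vertices, $2(d+m)-1$ edges, connected) and then verify that each move preserves tree-ness: (M1) needs a four-cycle so never applies, while (M2) and (M3) are exactly contraction/splitting and smoothing/subdivision, which cannot create a cycle or a multi-edge in a tree (in particular, contracting an edge of a tree never produces parallel edges, as that would require a triangle). This is the right way to close the gap the paper papers over. One small remark: your final step, identifying the decorated trip permutation of $G_{\mathsf{d}}$ with the permutation read off $\mathsf{d}$ as in Proposition~\ref{prop:decoding decorated permutation}, is not part of this proposition at all --- it is the content of Theorem~\ref{thm:main theorem on plabic graphs}, which the paper proves separately --- so you may simply drop it here rather than flag it as the main obstacle; for the statement as given, the counting, type, and tree arguments you supply are complete.
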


\begin{proof}
	It follows immediately that the given recipe yields a plabic graph with $d+m$ boundary vertices. To find the type of $G_{\mathsf{d}}$, notice that all internal vertices have degree $3$, except the first internal white vertex and the last internal black vertex on $\mathsf{d}$ (in northeast direction) which have degree $2$. Therefore, using the formula \eqref{eq:type of a plabic graph}, one obtains
	\[
		\frac 12 \bigg(n + \sum_{v \text{ black}} \big(\deg(v) - 2\big) + \sum_{v \text{ white}}\big(2 - \deg(v)\big)\bigg) = \frac 12 \big(d+m + (d-1) - (m-1) \big) = d.
	\]
	Thus, the type of $G_{\mathsf{d}}$ is $(d,d+m)$.
	
	Let us verify now that the graphs in the move-equivalence class of plabic graphs of a rational Dyck positroid are trees. Let $G$ be a plabic graph representing a rational Dyck positroid $P$ of type $(d,d+m)$, and let us check that $G$ is a tree. As any two graphs in the same move-equivalence class of plabic graphs corresponding to $P$ are homotopic, it suffices to assume that $G$ is the representative described in Proposition~\ref{prop:the plabic graph of a rational Dyck positroid}. Suppose, by way of contradiction, that $G$ is not a tree, meaning that it has a cycle consisting of the vertices $v_1, \dots, v_k$ for some $k \ge 2$. Because every boundary vertex has degree one, each $v_i$ must be an internal vertex. By Proposition~\ref{prop:the plabic graph of a rational Dyck positroid}, each internal vertex is connected to exactly one boundary vertex, which implies that $\deg(v_i) \ge 3$ for each $i=1,\dots,k$. On the other hand, it immediately follows by Proposition~\ref{prop:the plabic graph of a rational Dyck positroid} that every vertex of $G$ has degree at most $3$. Thus, $\deg(v_i) = 3$ for each $i=1,\dots,k$. As $G$ is connected the set of internal vertices of $G$ is $\{v_1, \dots, v_k\}$, contradicting the fact that $G$ has internal vertices of degree $2$, for instance, the black internal vertex adjacent to the boundary vertex $1$.
	
	The fact that no graph in the move-equivalence class of the plabic graph described in Proposition~\ref{prop:the plabic graph of a rational Dyck positroid} has an internal cycle immediately implies that (R1) cannot be applied to any of such graphs. Hence the plabic graph described in Proposition~\ref{prop:the plabic graph of a rational Dyck positroid} must be reduced.
\end{proof}

\begin{theorem} \label{thm:main theorem on plabic graphs}
	A rank $d$ positroid on the ground set $[d+m]$ is a rational Dyck positroid if and only if it can be represented by one of the plabic graphs $G_\mathsf{d}$ described in Proposition~\ref{prop:the plabic graph of a rational Dyck positroid}.
\end{theorem}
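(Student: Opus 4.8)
The plan is to reduce the statement, via the correspondences of Section~\ref{sec:plabic graph}, to a single identity between permutations. By Postnikov's theorem, two reduced plabic graphs are move-equivalent exactly when they share the same decorated trip permutation, and the assignment $G \mapsto P_G$ identifies move-equivalence classes of perfectly orientable plabic graphs of type $(d,d+m)$ with rank $d$ positroids on $[d+m]$. Since Proposition~\ref{prop:the plabic graph of a rational Dyck positroid} already guarantees that $G_{\mathsf{d}}$ is a reduced plabic graph of type $(d,d+m)$, the positroid $P_{G_{\mathsf{d}}}$ it represents is precisely the one whose decorated permutation equals the decorated trip permutation $\pi_{G_{\mathsf{d}}}$. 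Hence the whole theorem will follow once I prove the main claim that $\pi_{G_{\mathsf{d}}} = \pi_{\mathsf{d}}$, where $\pi_{\mathsf{d}}$ denotes the decorated permutation of the rational Dyck positroid induced by $\mathsf{d}$; by Proposition~\ref{prop:decoding decorated permutation} the latter is the cycle obtained by reading the step labels of $\mathsf{d}$ in the southwest direction.

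To establish the main claim I would trace the rules-of-the-road trips on $G_{\mathsf{d}}$. Recall from the proof of Proposition~\ref{prop:the plabic graph of a rational Dyck positroid} that $G_{\mathsf{d}}$ is a tree whose internal vertices $w_1, \dots, w_{d+m}$ sit at the midpoints of the consecutive steps $s_1, \dots, s_{d+m}$ of $\mathsf{d}$, with $w_j$ black exactly when $s_j$ is vertical, each $w_j$ joined to a single boundary vertex (its \emph{leg}) and to its spine neighbors $w_{j-1}$ and $w_{j+1}$. Labelling the boundary vertex attached to $w_j$ by the label of $s_j$, the goal is to show that the trip entering along the leg of $w_j$ travels one spine edge toward the origin and exits along the leg of $w_{j-1}$ for each $j \ge 2$, while the trip entering at $w_1$ runs along the entire spine and exits at $w_{d+m}$. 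This would give $\pi_{G_{\mathsf{d}}}(s_j) = s_{j-1}$ for $2 \le j \le d+m$ and $\pi_{G_{\mathsf{d}}}(s_1) = s_{d+m}$, which is exactly the cyclic order in which the labels appear when read in the southwest direction, so that $\pi_{G_{\mathsf{d}}} = \pi_{\mathsf{d}}$. The desired trip behavior is governed by three local turning rules: a trip entering an internal vertex along its leg continues toward its origin-side neighbor; a trip entering from its origin-side neighbor continues toward its far-side neighbor; and a trip entering from its far-side neighbor leaves along the leg.

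The main obstacle is verifying these three rules in every case. This requires fixing the cyclic (rotational) order of the three edges at each internal vertex, which depends on whether $s_{j-1}$, $s_j$, and $s_{j+1}$ are horizontal or vertical, and then checking that ``turn left at a black vertex, turn right at a white vertex'' produces the stated movement for each configuration; the two degree-two vertices (the first white and the last black) must be handled separately, and it is precisely their behavior that forces the trip starting at $s_1$ to wrap around the entire spine. Once $\pi_{G_{\mathsf{d}}} = \pi_{\mathsf{d}}$ is in hand, both implications follow immediately. If $P$ is a rank $d$ rational Dyck positroid, then by the bijection of Section~\ref{sec:decorated permutation} it is induced by some rational Dyck path $\mathsf{d}$, and the identity shows that the reduced graph $G_{\mathsf{d}}$ represents $P$. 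Conversely, if $P$ is represented by some $G_{\mathsf{d}}$, then its decorated permutation is $\pi_{\mathsf{d}}$, so, since a positroid is determined by its decorated permutation, $P$ coincides with the rational Dyck positroid induced by $\mathsf{d}$ and is therefore a rational Dyck positroid.
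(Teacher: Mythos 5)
Your proposal is correct and follows essentially the same route as the paper: both reduce the theorem to the identity $\pi_{G_{\mathsf{d}}} = \pi_{\mathsf{d}}$ and verify it by tracing the rules-of-the-road trips along the spine of $G_{\mathsf{d}}$, with the trip from the boundary vertex labeled $d+1$ wrapping around all internal vertices to reach $1$. Your packaging of the local checks into three uniform turning rules (leg-to-origin-side, origin-side-to-far-side, far-side-to-leg) is just a tidier organization of the paper's case-by-case geometric traces, not a different argument.
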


\begin{proof}
	Let us prove first that for every rational Dyck path $\mathsf{d}$ of type $(m,d)$, the plabic graph $G_\mathsf{d}$ represents a rational Dyck positroid. To do this we will show that the decorated trip permutation $\pi_G$ of $G_\mathsf{d}$ is precisely the decorated permutation $\pi_\mathsf{d} = (1 \ j_2 \ \dots j_{d+m})$ of the rational Dyck positroid $P$ induced by $\mathsf{d}$. Note that, if we label the internal vertices of $G_{\mathsf{d}}$, which lie on $\mathsf{d}$, as in Proposition~\ref{prop:decoding decorated permutation}, then the endpoints of each edge of $G_{\mathsf{d}}$ incident to the boundary have the same label.
	
	First, we suppose that for $n < d+m$ the $n$-th step (going southwest) of $\mathsf{d}$, which is labeled by $j_n$, is vertical. If the $(n+1)$-th step of $\mathsf{d}$ is also vertical, then $\pi_G(j_n) = j_{n+1}$ as there is a path in $G_{\mathsf{d}}$ from $j_n$ to $j_{n+1}$ following the rules of the road, namely the path going from the boundary vertex $j_n$ west to the black internal vertex in the middle of the $n$-th step of $\mathsf{d}$, turning left to the black internal vertex in the middle of the $(n+1)$-th step of $\mathsf{d}$, and turning left to the boundary vertex $j_{n+1}$. On the other hand, if the $(n+1)$-th step of $\mathsf{d}$ is horizontal, then there is also a path from $j_n$ to $j_{n+1}$  following the rules of the road, namely the one going from the boundary vertex $j_n$ to the black internal vertex in the middle of the $n$-th step of $\mathsf{d}$, turning left to the white internal vertex in the middle of the $(n+1)$-th step of $\mathsf{d}$, and turning right to the boundary vertex $j_{n+1}$, yielding again $\pi_G(j_n) = j_{n+1}$. In a similar way we can argue that $\pi_G(j_n) = j_{n+1}$ when the $n$-th step of $\mathsf{d}$ is horizontal; the verification is left to the reader.
	
	Also notice that the path in $G_{\mathsf{d}}$ starting at the boundary vertex labeled by $d+1$ must travel in northeast direction through all the internal vertices until it reaches the boundary vertex labeled by $1$; this is because every time it visits a black (resp., white) internal vertex it must turn left (resp., right) and this forces the path to avoid the edges incident to the boundary (except the first one and last one). Hence $\pi_G(d+1) = 1$ and, therefore, we can conclude that $\pi_G$ is the decorated permutation of the rational Dyck positroid induced by $\mathsf{d}$
	
	We have proved that each of the plabic graphs of type $(d,d+m)$ described in Proposition~\ref{prop:the plabic graph of a rational Dyck positroid} represents a rational Dyck positroid of rank $d$ on the ground set $[d+m]$. On the other hand, given a positroid induced by a rational Dyck path $\mathsf{d}$ of type $(m,d)$, we can find its decorated permutation $\pi_\mathsf{d}$ by reading $\mathsf{d}$ in southwest direction as in Proposition~\ref{prop:decoding decorated permutation} and, by the argument just explained above, one finds that $\pi_G = \pi_\mathsf{d}$, where $\pi_G$ is the decorated trip permutation of the plabic graph $G_{\mathsf{d}}$ obtained from $\mathsf{d}$ by following the recipe in the statement above. The proof now follows.
\end{proof}

%
%
%

\begin{example} \label{ex:reducing RD plabic graph to the smallest bipartite}
	Let $P$ be the positroid induced by the rational Dyck path $\mathsf{d}$ of type $(8,5)$ shown in Figure~\ref{fig:rational Dyck path}. The following picture illustrates the plabic graph $G_{\mathsf{d}}$ corresponding to $P$ described in Proposition~\ref{prop:the plabic graph of a rational Dyck positroid} (on the left) and a minimal bipartite graph in the move-equivalence class of $G_{\mathsf{d}}$ (on the right). 
	\begin{figure}[h]
		\centering
		\includegraphics[width = 5.7cm]{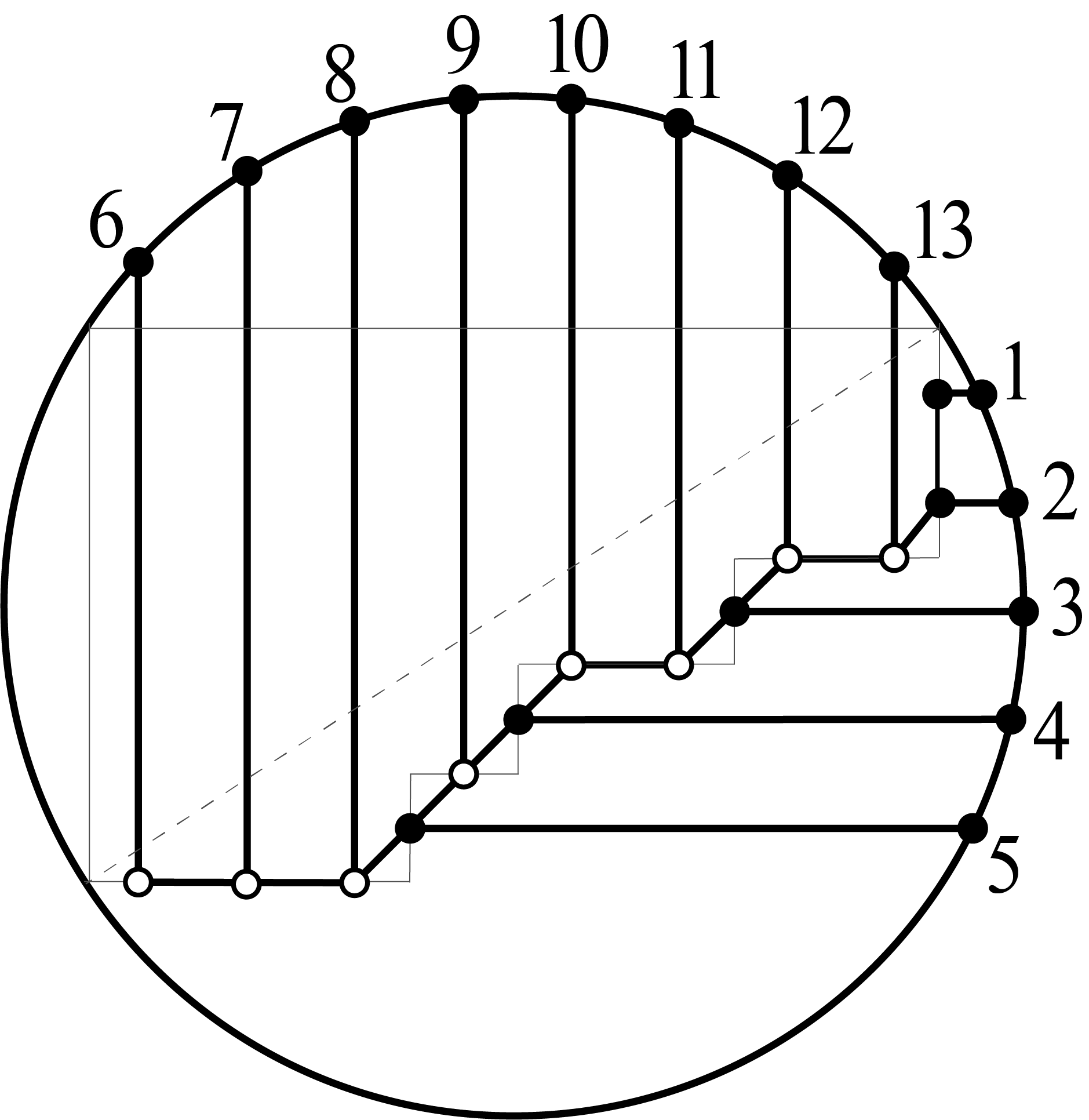} \hspace{1.6cm}
		\includegraphics[width = 5.7cm]{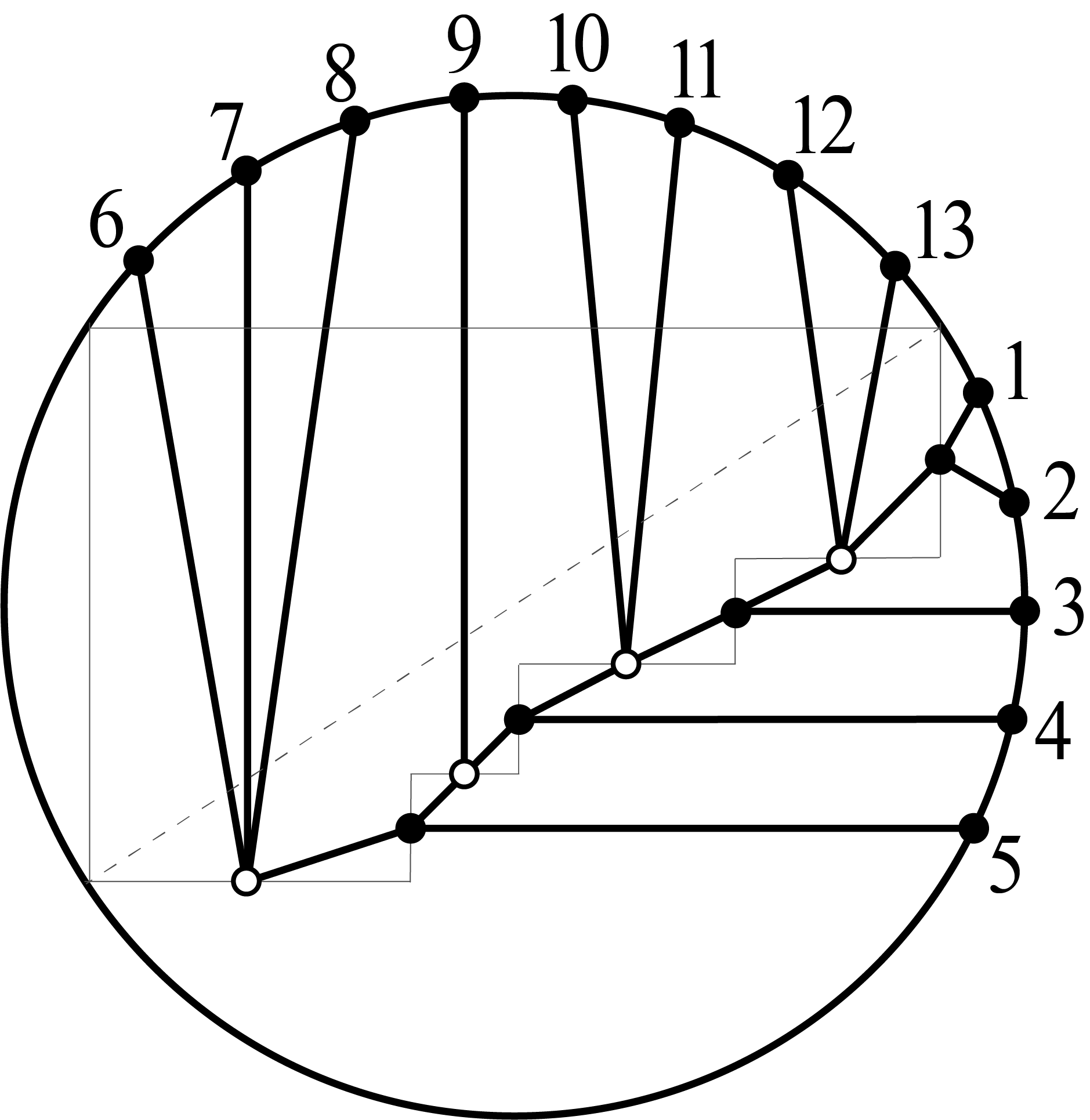}
		\caption{The plabic graph of a rank $5$ rational Dyck positroid on the ground set $[13]$ and a minimal bipartite plabic graph of its move-equivalence class.}
		\label{fig:plabic graph of a rational Dyck positroid and its reduced representative.}
	\end{figure}
\end{example}

\section{The Polytope of a Rational Dyck Positroid} \label{sec:positroid polytope}

In this section we characterize the matroid polytope of a rational Dyck positroid. We will do this by refining the description given in \cite{ARW16} of the matroid polytope of a general positroid by a set of inequalities.

The \emph{indicator vector} of a subset $B$ of $[n]$ is defined to be $e_B := \sum_{j \in B} e_j$, where $e_1, \dots, e_n$ are the standard basic vectors of $\rr^n$. Also, for a subset $S$ of $\rr^n$, we let $\text{conv} (S)$ denote the convex hull of $S$.

\begin{definition}
	The \emph{matroid polytope} $\Gamma_M$ of the matroid $M = ([n], \mathcal{B})$ is the convex hull of all indicator vectors of subsets in $\mathcal{B}$, namely
	\[
		\Gamma_M := \text{conv}\big(\{e_B \mid B \in \mathcal{B}\}\big).
	\]
	When $M$ happens to be a positroid, we call $\Gamma_M$ the \emph{positroid polytope} of $M$.
\end{definition}

Matroid polytopes have been extensively studied in the literature; see, for example, \cite{GGMS87}, \cite{ABD10} and references therein. In particular, the reader will find the next elegant characterization.

\begin{theorem} \cite[Theorem~4.1]{GGMS87}
	Let $\mathcal{B}$ be a collection of subsets of $[n]$, and let $\Gamma_{\mathcal{B}}$ denote $\emph{conv}\big(\{e_B \mid B \in \mathcal{B}\}\big) \subset \rr^n$. Then $\mathcal{B}$ is the collection of bases of a matroid if and only if every edge of $\Gamma_{\mathcal{B}}$ is a parallel translate of $e_i - e_j$ for some $i, j \in [n]$.
\end{theorem}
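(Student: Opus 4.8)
The plan is to prove both implications by translating the geometric notion of an edge of $\Gamma_{\mathcal{B}}$ into the single-element exchange structure of $\mathcal{B}$. Throughout I will use two elementary facts. First, since the points $e_B$ are distinct $0/1$ vectors, each is a vertex of $\Gamma_{\mathcal{B}}$: indeed $e_B$ is the unique maximizer over $\Gamma_{\mathcal{B}}$ of the functional $x \mapsto \sum_{k \in B} x_k - \sum_{k \notin B} x_k$, so the vertex set of $\Gamma_{\mathcal{B}}$ is exactly $\{e_B \mid B \in \mathcal{B}\}$. Second, a $0/\!\pm\!1$ vector that is a scalar multiple of $e_i - e_j$ must equal $\pm(e_i - e_j)$; hence the edge between $e_B$ and $e_{B'}$ is a parallel translate of some $e_i - e_j$ if and only if $|B \triangle B'| = 2$, i.e. $B' = (B \setminus \{i\}) \cup \{j\}$.

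For the forward direction, assume $\mathcal{B}$ is the set of bases of a matroid and let $e_B, e_{B'}$ span an edge. Because an edge is a face, there is a weight vector $w \in \rr^n$ for which $B$ and $B'$ are the only two bases of maximum weight $\sum_{k \in C} w_k$; in particular $w(B) = w(B')$. Suppose for contradiction that $|B \setminus B'| = |B' \setminus B| \ge 2$. For each $i \in B \setminus B'$ the exchange axiom yields $j(i) \in B' \setminus B$ with $(B \setminus \{i\}) \cup \{j(i)\} \in \mathcal{B}$; since $|B \setminus B'| \ge 2$, this basis differs from both $B$ and $B'$, so it has strictly smaller weight and thus $w_{j(i)} < w_i$. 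Symmetrically, for each $j \in B' \setminus B$ there is $i(j) \in B \setminus B'$ with $w_{i(j)} < w_j$. Composing these two strictly weight-decreasing maps gives a self-map of the finite set $B \setminus B'$ along which $w$ strictly decreases at every step, which is impossible by finiteness. Hence $|B \triangle B'| = 2$, so the edge is a translate of some $e_i - e_j$.

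For the converse, assume every edge of $\Gamma_{\mathcal{B}}$ is a translate of some $e_i - e_j$. Since the $1$-skeleton of a polytope is connected and every edge preserves the coordinate sum $\sum_k x_k$, all members of $\mathcal{B}$ share a common cardinality $d$. To verify the exchange axiom, fix $B, B' \in \mathcal{B}$ and $b \in B \setminus B'$. The key input is the general fact that the cone of feasible directions at a vertex $e_B$ is generated by the edge directions emanating from $e_B$; by hypothesis these directions have the form $e_j - e_i$ with $i \in B$, $j \notin B$, and $(B \setminus \{i\}) \cup \{j\} \in \mathcal{B}$. Since $e_{B'} \in \Gamma_{\mathcal{B}}$, I will write
\[
	e_{B'} - e_B = \sum_t \lambda_t\,(e_{j_t} - e_{i_t}), \qquad \lambda_t \ge 0,
\]
with each $(B \setminus \{i_t\}) \cup \{j_t\} \in \mathcal{B}$. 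Reading off the coefficient of $e_b$ (which is $-1$ on the left, while no $j_t$ can equal $b \in B$) forces some index $t$ with $i_t = b$ and $\lambda_t > 0$; reading off the coefficient of $e_j$ for $j \notin B \cup B'$ (which is $0$ on the left) forces every $j_t$ with $\lambda_t > 0$ to lie in $B'$. For that distinguished $t$ we obtain $j_t \in B' \setminus B$ with $(B \setminus \{b\}) \cup \{j_t\} \in \mathcal{B}$, which is exactly the exchange property, so $\mathcal{B}$ is the set of bases of a matroid.

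The hard part will be the forward direction: converting the purely geometric hypothesis that $e_B, e_{B'}$ form an edge into usable combinatorics. The crucial move is the linear-programming duality that presents an edge as the maximizer set of a weight functional, after which the exchange axiom applies; care is needed to secure the \emph{strict} weight inequalities, and this is precisely where the hypothesis $|B \triangle B'| \ge 2$ is used to guarantee that each exchanged set is a genuinely new basis. The converse is comparatively mechanical once the tangent-cone description of edge directions is in hand, the only subtlety being the coefficient bookkeeping that simultaneously locates $b$ among the $i_t$ and confines the matching $j_t$ to $B' \setminus B$.
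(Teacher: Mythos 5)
The paper does not actually prove this statement---it is quoted with a citation to \cite{GGMS87} and used as a black box---so there is no internal proof to compare against; your argument has to stand on its own, and it does. Both directions are correct and follow the classical GGMS argument. Your preliminary reductions are sound: each $e_B$ is indeed a vertex (your separating functional works), and for $0/1$ vertices ``edge parallel to $e_i - e_j$'' is equivalent to $|B \triangle B'| = 2$. In the forward direction, exposing the edge by a weight vector $w$ whose maximizer set is exactly $\{e_B, e_{B'}\}$ is legitimate since all faces of a polytope are exposed and every $e_C$ is a vertex; the cycle argument---composing the exchange maps $i \mapsto j(i)$ and $j \mapsto i(j)$ into a strictly $w$-decreasing self-map of the finite set $B \setminus B'$---is exactly the standard contradiction, and your check that each exchanged set differs from both $B$ and $B'$ (which is where $|B \setminus B'| \ge 2$ enters) is the one point where such proofs usually go wrong, and you got it right. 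Note you implicitly use $|B \setminus B'| = |B' \setminus B|$, which the matroid axioms supply. In the converse, deriving equicardinality from connectedness of the $1$-skeleton is a necessary step that you correctly include, and the coefficient bookkeeping is airtight: $j_t \notin B$ rules out $j_t = b$, so the coefficient $-1$ at coordinate $b$ forces some $i_t = b$ with $\lambda_t > 0$, while the vanishing coefficient at any $j \notin B \cup B'$ confines all active $j_t$ to $B' \setminus B$. The only inputs left as black boxes are two standard polytope facts (faces are exposed; the tangent cone at a vertex is generated by the directions of its incident edges), both true and citable. In short: a complete and correct proof, essentially the original argument of \cite{GGMS87}.
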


Descriptions of matroid polytopes by sets of inequalities have also been established. For instance, in \cite{dW76} Welsh describes a general matroid polytope $M = ([n], \mathcal{B})$ by using $O(2^n)$ inequalities. When the matroid $M$ happens to be a positroid, its positroid polytope can be described by using only $O(n^2)$ inequalities.

\begin{prop} \cite[Proposition~5.5]{ARW16} \label{prop:positroid polytope}
	Let $\mathcal{I} = (I_1, \dots, I_n)$ be a Grassmann necklace of type $(d,n)$, and let $M$ be its corresponding positroid. For any $j \in [n]$, suppose the elements of $I_j$ are $a_1^j \le_j  \dots \le_j a_d^j$. Then the positroid polytope $\Gamma_M$ can be described by the inequalities
	\begin{align}
		x_1 + x_2 + \dots + x_n &= d, \hspace{1cm}& \label{eq:positroid polytope 1} \\
		x_j &\ge 0 &\text{for each} \ j \in [n], \label{eq:positroid polytope 2}  \\
		x_j + x_{j+1} + \dots + x_{a_k^j - 1} &\le k-1 &\text{for each} \ j \in [n] \ \text{and} \ k \in [d], \label{eq:positroid polytope 3} 
	\end{align}
	where all the subindices are taken modulo $n$.
\end{prop}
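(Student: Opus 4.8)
The plan is to prove that the polytope $P$ cut out by \eqref{eq:positroid polytope 1}--\eqref{eq:positroid polytope 3} coincides with the positroid polytope $\Gamma_M$, establishing the two inclusions $\Gamma_M \subseteq P$ and $P \subseteq \Gamma_M$ separately. The crucial preliminary observation is that each inequality \eqref{eq:positroid polytope 3} is exactly a rank inequality for a cyclic interval. Writing $C_{j,k}$ for the cyclic interval $\{j, j+1, \dots, a_k^j - 1\}$ read in the order $\le_j$ (with the convention that an empty range imposes $0 \le 0$), I would first record the standard fact that, since $I_j$ is the $\le_j$-lexicographically minimal basis of $M$, it is produced by the greedy algorithm, so that $|I_j \cap C| = \text{rk}_M(C)$ for every $\le_j$-initial segment $C$. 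Applying this to $C_{j,k}$, whose intersection with $I_j$ is exactly $\{a_1^j, \dots, a_{k-1}^j\}$, gives $\text{rk}_M(C_{j,k}) = k-1$, so that \eqref{eq:positroid polytope 3} reads $\sum_{i \in C_{j,k}} x_i \le \text{rk}_M(C_{j,k})$. Conversely, for a fixed $j$ these inequalities over $k \in [d]$ are equivalent to the full family $\sum_{i \in [j,l]} x_i \le \text{rk}_M([j,l])$: given $l$, if $k-1 = \text{rk}_M([j,l])$ then $[j,l] \subseteq C_{j,k}$, and nonnegativity of $x$ propagates the bound.

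For the inclusion $\Gamma_M \subseteq P$ it then suffices to check that each vertex $e_B$ lies in $P$ and invoke convexity. The identities $\sum_i (e_B)_i = |B| = d$ and $(e_B)_i \ge 0$ give \eqref{eq:positroid polytope 1} and \eqref{eq:positroid polytope 2}, while $\sum_{i \in C_{j,k}}(e_B)_i = |B \cap C_{j,k}| \le \text{rk}_M(C_{j,k}) = k-1$ gives \eqref{eq:positroid polytope 3}, the middle step holding because $B$ is independent. I would also record the sharper consequence of Oh's description of the bases (Theorem~\ref{thm:bijection between Grassmann necklaces and positroids}): the condition $I_j \preceq_j B$ is precisely $|B \cap C_{j,k}| \le k-1$ for all $k$, so the $0$/$1$ points of $P$ are exactly the indicator vectors of the bases of $M$. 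This makes the first inclusion transparent and settles half of the reverse one.

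For the inclusion $P \subseteq \Gamma_M$ I would invoke the description of a matroid polytope by all of its rank inequalities (the $O(2^n)$ description of Welsh, \cite{dW76}), namely $\Gamma_M = \{x \ge 0 : \sum_i x_i = d, \ \sum_{i \in S} x_i \le \text{rk}_M(S) \text{ for all } S \subseteq [n]\}$. By the first paragraph a point of $P$ already satisfies every cyclic-interval rank inequality, so the task reduces to the combinatorial claim that, for a positroid, the cyclic-interval inequalities together with nonnegativity and $\sum_i x_i = d$ imply the rank inequality of an arbitrary subset $S$. Writing $S$ as a disjoint union of maximal cyclic intervals $A_1, \dots, A_r$, naively summing the interval bounds yields only $\sum_{i \in S} x_i \le \sum_t \text{rk}_M(A_t)$, which is in general weaker than the desired $\le \text{rk}_M(S)$; closing this gap is exactly where the positroid hypothesis enters. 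Here I would use the structure theory of \cite{ARW16}: positroids restrict to positroids on cyclic intervals, and every positroid decomposes as a direct sum over a non-crossing partition of $[n]$ into connected positroids, a decomposition under which both $\Gamma_M$ and the inequality system split as products. This reduces the claim to connected positroids and permits an induction on the number of connected summands.

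The main obstacle is precisely this reduction: proving that no non-cyclic subset contributes a facet of $\Gamma_M$. Equivalently, one must show that $P$ is an integral polytope, for then each vertex of $P$ is a $0$/$1$ point, hence—by the characterization of the $0$/$1$ points of $P$ noted above—the indicator vector of a basis, giving $P = \text{conv}\{e_B : B \in \mathcal{B}\} = \Gamma_M$. I want to stress that integrality is not formal: the rows of \eqref{eq:positroid polytope 3} are indicator vectors of \emph{circular} arcs rather than ordinary intervals, and circular-ones matrices need not be totally unimodular, so integrality genuinely uses the Grassmann-necklace constraints on the endpoints $a_k^j$ and the non-crossing geometry of positroids. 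I would therefore expect the bulk of the work, following \cite{ARW16}, to lie in the connected case, where one analyzes how the tight cyclic intervals at a putative non-integral vertex must nest or be disjoint and derives a contradiction from the necklace relations among the $I_j$.
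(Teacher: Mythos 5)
Your preliminary translation of the system is correct and well executed: since $I_j$ is the $\le_j$-lexicographically minimal basis, the greedy characterization gives $|I_j\cap C|=\mathrm{rk}_M(C)$ for every $\le_j$-initial segment $C$, hence $\mathrm{rk}_M(C_{j,k})=k-1$, so \eqref{eq:positroid polytope 3} is precisely the family of rank inequalities over cyclic intervals (with the rank-$d$ intervals absorbed by \eqref{eq:positroid polytope 1} and \eqref{eq:positroid polytope 2}). The inclusion $\Gamma_M\subseteq P$ is then immediate, and your observation that, by Oh's theorem (Theorem~\ref{thm:bijection between Grassmann necklaces and positroids}), the condition $I_j\preceq_j B$ unwinds to $|B\cap C_{j,k}|\le k-1$, so that the $0/1$ points of $P$ are exactly the indicator vectors of bases, is also correct. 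Note, for calibration, that the paper itself offers no proof to compare against: it imports the statement wholesale from \cite{ARW16}.

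The genuine gap is that the decisive inclusion $P\subseteq\Gamma_M$ is never actually proved. You correctly reduce it to showing that, for a positroid, the cyclic-interval rank inequalities imply the rank inequality of an arbitrary subset (equivalently, that $P$ is integral), and you correctly diagnose that this is non-formal --- circular-ones systems are not totally unimodular, so positivity must enter --- but you then defer the entire argument (``I would expect the bulk of the work\dots to lie in the connected case''), which is precisely the content of the proposition. Moreover, the reduction you propose has its own hole: to split $\Gamma_M$ and the inequality system as a product over the non-crossing partition $S_1,\dots,S_r$ into connected components, one first needs the per-block equalities $\sum_{i\in S_t}x_i=\mathrm{rk}_M(S_t)$ on $P$; since a block of a non-crossing partition is in general a disjoint union of several cyclic intervals, its rank inequality is not among \eqref{eq:positroid polytope 3}, and summing the interval bounds only yields $\sum_{i\in S_t}x_i\le\sum_q\mathrm{rk}_M(A_q)$, which can strictly exceed $\mathrm{rk}_M(S_t)$. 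Assuming the block inequalities hold for points of $P$ is therefore an instance of the very claim under proof. So while you assemble the right ingredients from \cite{ARW16} (restriction of positroids to cyclic intervals, the non-crossing decomposition into connected positroids) and frame the problem accurately, the proposal as written establishes only $\Gamma_M\subseteq P$ and the coincidence of lattice points, not the equality of polytopes.
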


Our next task consists in refining Proposition~\ref{prop:positroid polytope} for those positroids induced by rational Dyck paths; we will accomplish this by detecting redundant inequalities.

\begin{prop} \label{prop:rational Dyck positroid polytope}
	Let $P$ be a rational Dyck positroid represented by the real $d \times (d+m)$ matrix $A \in \phi_{d,m}(\mathcal{D}_{d,m})$, and let $I_A = \{p_1 < \dots < p_t\}$ be the set of principal indices of $A$. Then the positroid polytope $\Gamma_P$ is described by the inequalities
	\begin{align}
		x_1 + \dots + x_{d+m}              &= d,		& \label{eq:RDP polytope 1} \\
		x_i     									  &\ge 0	  &\text{for} \ i \in [d+m], \label{eq:RDP polytope 2} \\
		x_i 		     							  &\le 1        &\text{for} \ i \in [d], \label{eq:RDP polytope 3} \\
		x_{p_i} + \dots + x_{p_{i+1}-1}  &\le 1        &\text{for} \ i \in [t], \label{eq:RDP polytope 4} \\
		x_i + \dots + x_{p_{m(i)} - 1} 	  &\le (d-i)+ m(i) &\text{for} \ i \in [d], \label{eq:RDP polytope 5} \\
		x_{p_i} + \dots + x_{\omega_A(p_i)} & \le \omega_A(p_i)   &\text{ for} \ i \in [t] \! \setminus \! \{1\},
		\label{eq:RDP polytope 6}
	\end{align}
	where $m(i) = \max\{r \in [t] \mid \omega_A(p_r) \ge i \ \text{and} \ r < i\}$.
\end{prop}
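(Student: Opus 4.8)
The plan is to prove that the reduced system \eqref{eq:RDP polytope 1}--\eqref{eq:RDP polytope 6} and the full system \eqref{eq:positroid polytope 1}--\eqref{eq:positroid polytope 3} of Proposition~\ref{prop:positroid polytope} cut out the same polytope $\Gamma_P$. Write $Q$ for the polytope defined by \eqref{eq:RDP polytope 1}--\eqref{eq:RDP polytope 6} and recall that, by Proposition~\ref{prop:positroid polytope}, $\Gamma_P$ is defined by \eqref{eq:positroid polytope 1}, \eqref{eq:positroid polytope 2}, and the family \eqref{eq:positroid polytope 3} ranging over all $j \in [d+m]$ and $k \in [d]$. The argument splits into the two inclusions $\Gamma_P \subseteq Q$ and $Q \subseteq \Gamma_P$. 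Throughout I would work with the explicit Grassmann necklace $\mathcal{I}(P) = (I_1, \dots, I_{d+m})$ provided by Proposition~\ref{prop:grassmann necklace description}, so that each inequality in \eqref{eq:positroid polytope 3} becomes $x_j + \dots + x_{a^j_k - 1} \le k-1$ (cyclically) with the $a^j_k$ read off from that description.

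For $\Gamma_P \subseteq Q$ it suffices to exhibit every reduced inequality as a member of the full family, after which the inclusion is immediate because $\Gamma_P$ satisfies all of \eqref{eq:positroid polytope 1}--\eqref{eq:positroid polytope 3}. Now \eqref{eq:RDP polytope 1} and \eqref{eq:RDP polytope 2} are exactly \eqref{eq:positroid polytope 1} and \eqref{eq:positroid polytope 2}. Using the four cases of Proposition~\ref{prop:grassmann necklace description} I would match the remaining ones to specific pairs $(j,k)$: inequality \eqref{eq:RDP polytope 3} is the case $(j,k)=(i,2)$ for $i\in[d]$, since $a^i_2 = i+1$; inequality \eqref{eq:RDP polytope 4} is the case $(j,k)=(p_i,2)$, since $a^{p_i}_2 = p_{i+1}$ (consecutive equal columns of $A$ are precisely the blocks delimited by the principal indices); inequality \eqref{eq:RDP polytope 5} is the case $(j,k)=\big(i,(d-i)+m(i)+1\big)$, for which $a^i_k = p_{m(i)}$; and inequality \eqref{eq:RDP polytope 6} is the case $(j,k)=\big(p_i,\omega_A(p_i)+1\big)$, for which the cyclic interval ends just past the row $\omega_A(p_i)\in[d]$. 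Verifying these identities is a matter of unwinding the entries of $I_j$ in Proposition~\ref{prop:grassmann necklace description}, and it is exactly here that the quantities $m(i)$ and $\omega_A(p_i)$ are pinned down.

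For $Q \subseteq \Gamma_P$ I must show that every inequality in \eqref{eq:positroid polytope 3} is a consequence of the reduced system together with \eqref{eq:RDP polytope 1} and \eqref{eq:RDP polytope 2}. The mechanism is to express the cyclic interval $[j,a^j_k-1]$ as a concatenation of cyclic sub-intervals, each supporting a kept inequality and with the kept right-hand sides summing to $k-1$; adding these inequalities then yields the desired bound, while any truncated boundary piece is absorbed using the nonnegativity constraints \eqref{eq:RDP polytope 2}. Concretely there are two regimes. When $[j,a^j_k-1]$ does not wrap past the boundary between the unit columns $[d]$ and the staircase columns $\{d+1,\dots,d+m\}$, it decomposes into full blocks of equal columns (handled by \eqref{eq:RDP polytope 3} and \eqref{eq:RDP polytope 4}), possibly with one truncated block at the start that is dominated, via \eqref{eq:RDP polytope 2}, by the block inequality containing it; this covers in particular every start $j$ lying in the interior of a block. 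When the interval does wrap, the unit columns it collects are linearly dependent on the staircase columns already present, so block inequalities alone overcount; here one of the long inequalities \eqref{eq:RDP polytope 5} or \eqref{eq:RDP polytope 6}, which encode exactly those dependences, must serve as one piece of the concatenation.

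The hard part will be the bookkeeping for the wrapping intervals and the precise matching of constants. I expect the crux to be verifying that the right-hand side of the kept long inequalities equals the correct rank, i.e., that the unit columns $e_1,\dots,e_{\omega_A(p_i)}$ lie in the span of the staircase columns $A_{p_i},\dots,A_{d+m}$ so that the bound $\omega_A(p_i)$ in \eqref{eq:RDP polytope 6} is sharp, and correspondingly that the count $(d-i)+m(i)$ in \eqref{eq:RDP polytope 5} is the true rank of $\{A_i,\dots,A_{p_{m(i)}-1}\}$. This rank bookkeeping would be carried out using the structure recorded in the proof of Proposition~\ref{prop:grassmann necklace description}: the weight map $\omega_A$ is strictly decreasing on $I_A$ and fixes $[d]$, and the set $E_A=[d]\setminus\{\omega_A(i)\mid d<i\le d+m\}$ isolates exactly the rows not covered by a staircase column. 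Establishing that these combinatorial data force the constants to line up---so that no inequality of \eqref{eq:positroid polytope 3} survives as an independent constraint beyond \eqref{eq:RDP polytope 3}--\eqref{eq:RDP polytope 6}---is the main obstacle; Lemma~\ref{lem:geometric inequality} should assist by controlling $\omega_A$ from below and thereby guaranteeing that the terminal segments of the wrapping intervals are long enough for the concatenation to close up correctly.
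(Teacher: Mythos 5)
Your two-inclusion strategy is exactly the paper's, and your first inclusion is essentially complete: the matchings $(j,k)=(i,2)$ for \eqref{eq:RDP polytope 3}, $(j,k)=(p_i,2)$ for \eqref{eq:RDP polytope 4}, $(j,k)=\big(i,(d-i)+m(i)+1\big)$ for \eqref{eq:RDP polytope 5}, and $(j,k)=\big(p_i,\omega_A(p_i)+1\big)$ for \eqref{eq:RDP polytope 6} are precisely the instances of \eqref{eq:positroid polytope 3} the paper invokes, read off from Proposition~\ref{prop:grassmann necklace description} as you indicate. The genuine gap is the reverse inclusion $Q \subseteq \Gamma_P$: you describe the mechanism (tile the cyclic interval $[j,a^j_k-1]$ by supports of kept inequalities with right-hand sides summing to $k-1$, absorbing a truncated initial block via \eqref{eq:RDP polytope 2}) but never execute it, and that execution \emph{is} the proof. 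The paper carries it out as an explicit case analysis: for $j\in[d]$, (i) if $j < a^j_k \le d+1$ one sums $k-1$ copies of \eqref{eq:RDP polytope 3}; (ii) if $a^j_k=1$ or $a^j_k>d+1$, so that $a^j_{k-1}=p_{k'}$, one sums \eqref{eq:RDP polytope 5} for $i=j$ with $k'-2$ block inequalities \eqref{eq:RDP polytope 4}; (iii) if $1<a^j_k<j$, one sums \eqref{eq:RDP polytope 5}, $k-3$ copies of \eqref{eq:RDP polytope 4}, \emph{and} one instance of \eqref{eq:RDP polytope 6}; three further subcases handle $j>d$. Case (iii) already falsifies a specific assertion in your sketch: you claim that in the wrapping regime ``one of the long inequalities \eqref{eq:RDP polytope 5} or \eqref{eq:RDP polytope 6}'' serves as a piece of the concatenation, whereas here both are needed simultaneously.

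You also mislocate the crux. You expect the hard step to be rank bookkeeping---verifying that $\omega_A(p_i)$ and $(d-i)+m(i)$ are the true ranks of the relevant column sets---and you propose Lemma~\ref{lem:geometric inequality} to assist. Neither is needed: since $I_j$ is the lexicographically minimal basis with respect to $\le_j$, the entry $a^j_k$ is by construction the first index at which the rank of the cyclic interval starting at $j$ reaches $k$, so once your first inclusion identifies each reduced inequality as an instance of \eqref{eq:positroid polytope 3}, its constant is correct for free; Lemma~\ref{lem:geometric inequality} plays no role in the paper's polytope argument (it belongs to the bijection proof in Section~4). What remains, and what your proposal does not supply, is the combinatorial verification in each case that the chosen supports exactly tile $[j,a^j_k-1]$ and the constants add to $k-1$. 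For example, in the paper's subcase $j = p_s$ with $a^j_k \le \omega_A(p_n)$ for all $n\in[t]$, one must check that $[a^j_k-1]\cap\{\omega_A(p_n) \mid n \in [t]\}=\emptyset$, so that exactly $(k-1)-(t-s+1)$ singleton inequalities \eqref{eq:RDP polytope 3} cover the wrapped tail after $t-s+1$ block inequalities \eqref{eq:RDP polytope 4} cover $[p_s,d+m]$. Until this bookkeeping is written out case by case, what you have is a correct plan, faithfully mirroring the paper's route, but not yet a proof.
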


\begin{proof}
	Let $\mathcal{I} = (I_1, \dots, I_{d+m})$ be the Grassmann necklace corresponding to $P$, and let $\Gamma$ be the polytope determined by \eqref{eq:RDP polytope 1}--\eqref{eq:RDP polytope 6}. Take $x = (x_1, \dots, x_{d+m})$ in $\Gamma_P$. Both \eqref{eq:RDP polytope 1} and \eqref{eq:RDP polytope 2} hold by Proposition~\ref{prop:positroid polytope}. Besides, taking $j \in [d]$ and $k = 2$ in \eqref{eq:positroid polytope 3}, we obtain the inequalities \eqref{eq:RDP polytope 3}, while taking $j \in I_A$ and $k=2$ we get the inequalities \eqref{eq:RDP polytope 4}. By Proposition~\ref{prop:grassmann necklace description}, for $i \in [d]$, the first $d-i+1$ entries of $I_i$ are $i, \dots, d$ and the next entries are some of the indices $p_1, \dots, p_t$. Therefore taking $j = i$ and $k= (d-j+1) + m(i)$ in the inequality \eqref{eq:positroid polytope 3}, one gets \eqref{eq:RDP polytope 5}. Again, by part (4) of Proposition~\ref{prop:grassmann necklace description}, the first $t-i+1$ entries of $I_{p_i}$ are $p_i, \dots, p_t$ and the next $\omega_A(p_i) + 1 - (t-i+1)$ entries of $I_{p_i}$ are the indices in $[\omega_A(p_i)] \setminus \{\omega_A(p_r) \mid i \le r \le t\}$. Hence \eqref{eq:RDP polytope 6} follows.
	
	Now we show that every element $x = (x_1, \dots, x_{d+m}) \in \Gamma$ must satisfy \eqref{eq:positroid polytope 1}--\eqref{eq:positroid polytope 3}. As \eqref{eq:positroid polytope 1} and \eqref{eq:positroid polytope 2} hold by the definition of $\Gamma$, it suffices to verify the inequality \eqref{eq:positroid polytope 3}. For $j \in [d+m]$ let $I_j = (a^j_1, \dots, a^j_d)$. We always have $a^j_1 = j$. Suppose first that $j \in [d]$. If $j < a^j_k \le d+1$, then \eqref{eq:positroid polytope 3} results from adding $k-1$ of the inequalities \eqref{eq:RDP polytope 3}. If $a^j_k = 1$ or $a^j_k > d+1$, then $a^j_{k-1} = p_{k'}$ for $k' \in [t]$. In this case, \eqref{eq:positroid polytope 3} results from adding \eqref{eq:RDP polytope 5} for $i=j$ and $k'-2$ inequalities \eqref{eq:RDP polytope 4}. Now suppose that $a^j_k = r$, where $1 < r < j$. Then we can obtain \eqref{eq:positroid polytope 3} by adding \eqref{eq:RDP polytope 5} for $i=j$, $k-3$ inequalities \eqref{eq:RDP polytope 4}, and \eqref{eq:RDP polytope 6} for the index $i$ such that $\omega_A(p_i) = a^j_k - 1$.
	
	Finally, suppose that $j \in \{d+1, \dots, d+m\}$. We can always assume that $a^j_1 \in I_A$ because $a^j_1 \in \{d+1,\dots, d+m\} \! \setminus \! I_A$ gives redundant inequalities. Let $a^j_1 = p_s$ for some $s \in [t]$. If $a^j_k \in \{p_s + 1, \dots, d+m\} \cup \{1\}$, then by Proposition~\ref{prop:grassmann necklace description}, it follows that $a^j_k \in I_A \cup \{1\}$; in this case \eqref{eq:positroid polytope 3} is the addition of $k-1$ inequalities \eqref{eq:RDP polytope 4}. It only remains to consider $a^j_k \in \{2,\dots,d\}$. Suppose first that $a^j_k \le \omega_A(p_s)$. If $a^j_k \le \omega_A(p_n)$ for each $n \in [t]$, then \eqref{eq:positroid polytope 3} is the addition of $t-s+1$ inequalities \eqref{eq:RDP polytope 4} and $(k-1) - (t-s+1)$ inequalities \eqref{eq:RDP polytope 3}. Otherwise, there exists a smallest index $r$ in $[t]$ such that $a^j_k > \omega_A(p_r)$. Taking $i = p_r$, we observe that \eqref{eq:positroid polytope 3} is obtained from adding $i-s$ inequalities \eqref{eq:RDP polytope 4}, the inequality \eqref{eq:RDP polytope 6}, and enough inequalities \eqref{eq:RDP polytope 3}. Lastly, suppose that $a^j_k > \omega_A(p_s)$. In this case it is not hard to see that \eqref{eq:positroid polytope 3} is implied by the addition of \eqref{eq:RDP polytope 6} for $i=s$ and enough inequalities \eqref{eq:RDP polytope 3}.
\end{proof}

\begin{remark}
	Although the description of the rational Dyck positroid given in Proposition~\ref{prop:rational Dyck positroid polytope} is not as simple as the one presented in Proposition~\ref{prop:positroid polytope}, the reader should observe that the number of inequalities in Proposition~\ref{prop:rational Dyck positroid polytope} is $O(d+m)$ while the number of inequalities in Proposition~\ref{prop:positroid polytope} is $O(d^2 + dm)$.
\end{remark}

\section{Acknowledgements}

	I want to thank Bruno Benedetti and Maria Gillespie for suggesting me to look at the connection between rational Dyck paths and positroids, and my advisor, Lauren Williams, for many helpful conversations and for encouraging me to work on this project.

\end{document}